\newcommand{\la}{\langle}
\newcommand{\ra}{\rangle}
\newcommand{\x}{\times}
\newcommand{\ox}{\otimes}
\newcommand{\too}{\longrightarrow}
\newcommand{\bd}{\partial}
\newcommand{\SA}{{\mathcal{A}}}
\newcommand{\SC}{{\mathcal{C}}}
\newcommand{\SH}{{\mathcal{H}}}
\newcommand{\SM}{{\mathcal{M}}}
\newcommand{\ZZ}{\mathbb{Z}}
\newcommand{\CC}{\mathbb{C}}
\newcommand{\RR}{\mathbb{R}}
\newcommand{\im}{\operatorname{im}}
\newcommand{\GL}{\operatorname{GL}}
\renewcommand{\O}{\operatorname{O}}
\DeclareMathOperator{\Hom}{Hom}
\DeclareMathOperator{\Id}{Id}
\DeclareMathOperator{\Diff}{Diff}
\newcommand{\inc}{\hookrightarrow}
\numberwithin{equation}{section}
\newtheorem{proposition}{Proposition}[section]
\newtheorem{theorem}[proposition]{Theorem}
\newtheorem{lemma}[proposition]{Lemma}
\newtheorem{corollary}[proposition]{Corollary}
\theoremstyle{definition}
\newtheorem{definition}[proposition]{Definition}
\theoremstyle{remark}
\newtheorem{remark}[proposition]{Remark}
\begin{document}

\title[Homotopic properties of K\"ahler orbifolds]{Homotopic properties of K\"ahler 
orbifolds}

\author[G. Bazzoni]{Giovanni Bazzoni}

\address{Philipps-Universit\"at Marburg, FB Mathematik \& Informatik,
Hans-Meerwein-Str. 6 Campus Lahnberge, 35032 Marburg, Germany}
\email{bazzoni@mathematik.uni-marburg.de}

\author[I. Biswas]{Indranil Biswas}

\address{School of Mathematics, Tata Institute of Fundamental
Research, Homi Bhabha Road, Bombay 400005, India}
\email{indranil@math.tifr.res.in}

\author[M. Fern\'andez]{Marisa Fern\'{a}ndez}

\address{Universidad del Pa\'{\i}s Vasco,
Facultad de Ciencia y Tecnolog\'{\i}a, Departamento de Matem\'aticas,
Apartado 644, 48080 Bilbao, Spain}
\email{marisa.fernandez@ehu.es}

\author[V. Mu\~{n}oz]{Vicente Mu\~{n}oz}

\address{Facultad de Ciencias Matem\'aticas, Universidad
Complutense de Madrid, Plaza de Ciencias
3, 28040 Madrid, Spain}
\email{vicente.munoz@mat.ucm.es}

\author[A. Tralle]{Aleksy Tralle}

\address{Department of Mathematics and Computer Science, University of Warmia
and Mazury, S\l\/oneczna 54, 10-710, Olsztyn, Poland}
\email{tralle@matman.uwm.edu.pl}

\subjclass[2010]{57R18, 55S30}

\keywords{K\"ahler orbifolds, Sasakian manifolds, symplectic orbifolds, formality, Massey products, 
hard Lefschetz theorem.}

\dedicatory{To Simon Salamon on the occasion of his 60th birthday}

\begin{abstract}
We prove the formality and the evenness of odd-degree Betti numbers for compact 
K\"ahler orbifolds, by adapting the classical proofs for K\"ahler manifolds. As a 
consequence, we obtain examples of symplectic orbifolds not admitting any K\"ahler 
orbifold structure. We also review the known examples of non-formal simply 
connected Sasakian manifolds, and produce an example of a non-formal quasi-regular Sasakian manifold
with Betti numbers $b_1=0$ and $b_2\,> 1$.
\end{abstract}

\maketitle

\section{Introduction}\label{sec:intro}

A K\"ahler manifold $M$ is a complex manifold, admitting 
a Hermitian metric $h$, such that the $(1,1)$-form $\omega=\mathrm{Im}\,h$ is closed, 
and so symplectic, where $\mathrm{Im}\,h$ is the imaginary part of $h$. The
real part $g=\mathrm{Re}\,h$ of $h$ is a Riemannian metric
which is is called the \textit{K\"ahler metric} associated to $\omega$. 
If a compact manifold admits a K\"ahler metric, then it inherits some very striking topological properties,
for example: theory of K\"ahler groups, evenness of odd-degree Betti numbers, hard Lefschetz theorem,
formality of the rational homotopy type (see \cite{DGMS,Wells}).

 K\"ahler metrics can be also defined on {\em orbifolds}.
A smooth orbifold $X$, of dimension $n$, is a Hausdorff topological space admitting 
an open cover $\{U_i\}_{i\in I}$, such that each $U_i$ is homeomorphic
to a quotient $\Gamma_{i}{\backslash}\widetilde{U}_i$, where $\widetilde{U}_i \subset {\mathbb{R}}^n$ is an open subset,
$\Gamma_i\subset \GL(n, \mathbb{R})$ a f{}inite group acting on $\widetilde{U}_i $, and
there is a $\Gamma_i$-invariant 
continuous map $\varphi_i\,\colon\,\widetilde{U}_i\,\longrightarrow\,U_i$ inducing a homeomorphism from 
$\Gamma_{i}{\backslash}\widetilde{U}_i$ onto $U_i$. 
Moreover, the gluing maps are
required to be smooth and compatible with the group action (see Section \ref{sec:orbifolds} for the details).

The orbifold differential forms on a smooth orbifold are defined
in local charts as $\Gamma_{i}$-invariant differential forms
on each open set $\widetilde{U}_i$, which are compatible with the gluing maps.
The de Rham complex is defined
in the same way as for smooth manifolds, and the de Rham cohomology is equal to the singular cohomology. 
This result and Poincar\'e duality theorem were first proved by Satake, who introduced the notion of
orbifold under the name ``$V$-manifold" \cite{S1}. Since Satake, 
various index theorems 
were generalized by Kawasaki to the category of $V$-manifolds
(see \cite{Ka1, Ka2, Ka3} and the book by Atiyah \cite{Atiyah}). In the late 1970s, Thurston \cite{Thurston} rediscovered 
the concept of $V$-manifold, under the name of orbifold, in his study of the geometry
of 3-manifolds, and defined the orbifold fundamental group.
Even though orbifolds were already very important objects in mathematics, with the work of Dixon, Harvey, Vafa and Witten on conformal
field theory  \cite{DHVW}, the interest on orbifolds dramatically increased, due to their role in string theory
(see \cite{Adem} and the references therein).
A complex orbifold, of complex dimension $n$, is an orbifold $X$ with charts 
$(U_i,\, \widetilde{U}_i, \,\Gamma_{i}, \,\varphi_i)$ as above satisfying the
conditions that
$\widetilde{U}_i \subset {\mathbb{C}}^n$, $\Gamma_i\subset \GL(n, \mathbb{C})$, and all
the gluing maps are given by biholomorphisms. 
One can also define {\em orbifold complex forms} and {\em orbifold
Hermitian metrics} on $X$ (see Section \ref{sec:kahler-orb} for the details). 
A complex orbifold $X$ is said to be {\em K\"ahler} if $X$ admits
an orbifold Hermitian metric such that the associated orbifold K\"ahler form is closed. 
The notion of complex orbifold
was introduced, under the name of {\em complex $V$-manifold}, by Baily \cite{Baily1} who
generalized the Hodge decomposition theorem to Riemannian $V$-manifolds. 

Although compact K\"ahler orbifolds are not smooth manifolds in general, they
continue to possess some topological properties of K\"ahler manifolds. There
are two possible points of view to look at topological properties of orbifolds. One is to look at the 
topological properties of the underlying topological space, and the other is to look at specific
orbifold invariants such as the orbifold fundamental group or the orbifold cohomology. We shall
focus on the former, since the latter is more adequate for the interplay between the topological space
and the subspaces defining the orbifold ramification locus. So when we talk of the fundamental group
or the homology or cohomology of the orbifold, we refer to those of the underlying topological spaces. 

A compact K\"ahler orbifold is the leaf space of a foliation on a
compact manifold $Y$ \cite[Proposition 4.1]{GHS}, and
such a foliation is transversely K\"ahler \cite[Proposition 1.4]{Wang-Zaffran}. Moreover, the basic cohomology of $Y$ is isomorphic to the singular cohomology of the
orbifold over $\mathbb{C}$ \cite[5.3]{Pflaum}.
In \cite{Wang-Zaffran} it is proved that any compact K\"ahler orbifold satisfies the
hard Lefschetz property. This is done
by using a result of El Kacimi-Alaoui \cite{El Kacimi} which says that the basic cohomology
of a transversely K\"ahler foliation on a compact manifold satisfies the hard Lefschetz property.
On the other hand, the $dd^{c}$-lemma for the algebra of the basic forms 
of a transversely K\"ahler foliation was shown in \cite{CW}. Also
in \cite{El Kacimi} it is proved that the basic Dolbeault cohomology of a transversely K\"ahler foliation on a compact manifold
has the same properties as the Dolbeault cohomology of a compact K\"ahler manifold. 
So, compact K\"ahler orbifolds possess
the earlier mentioned topological properties of K\"ahler manifolds. Regarding the
fundamental group of
a K\"ahler orbifold, the fundamental group of the topological space underlying the
orbifold actually coincides with the fundamental group of a
resolution {\cite[Theorem 7.8.1]{Kol}}. 
Therefore, these fundamental groups of K\"ahler orbifolds
satisfy the same restrictions as the fundamental groups of compact K\"ahler manifolds.

The main purpose of this paper is to prove that compact K\"ahler orbifolds are formal.
This is achieved by adapting the proof of formality for K\"ahler manifolds
given in \cite{DGMS}. The machinery used is described in Sections \ref{sec:formality}, \ref{sec:orbifolds} and \ref{sec:ellipticoperators}.
In Sections \ref{sec:formality} and \ref{sec:orbifolds} we focus on the formality of smooth manifolds and orbifolds, respectively, and in
Section \ref{sec:ellipticoperators} we study elliptic operators on complex orbifolds
following \cite{Wells}, but it was first developed by Baily in the aforementioned paper \cite{Baily1}.
Then, in Section \ref{sec:kahler-orb} the orbifold Dolbeault cohomology
of a complex orbifold is defined, and the $\bd{\overline\bd}$-lemma for compact
K\"ahler orbifolds is proved (Lemma \ref{lem:dd-lemma}). The formality of
compact K\"ahler orbifolds is deduced using this
(Theorem \ref{thm:K-orb-formal}). 
Moreover, in Proposition \ref{prop:3} we prove that the orbifold Dolbeault cohomology 
is equipped with an analogue of the Hodge decomposition
for K\"ahler manifolds. Consequently, the odd Betti numbers of compact K\"ahler orbifolds are even.
(A Hodge decomposition theory for nearly K\"ahler manifolds was developed by Verbitsky in \cite{MishaV}, where it is 
noted that this theory works also for nearly K\"ahler orbifolds.)
In Section \ref{so:nonko}, we produce examples
of symplectic orbifolds which do not admit any K\"ahler orbifold metric (as
they are non-formal or they do not possess the hard Lefschetz property).

Closely related to K\"ahler orbifolds are Sasakian manifolds. Such a manifold is a Riemannian manifold
$(N, g)$, of dimension $2n+1$, such that its cone
$(N\times{\mathbb{R}}^+, \,g^c = t^2 g\,+\,dt^2)$ is K\"ahler,
and so the holonomy group for $g^c$ is a subgroup of
$\mathrm{U}(n+1)$. 
The K\"ahler structure on the cone induces a Sasakian
structure on the base of the cone. In particular, the
complex structure on the cone gives rise to a Reeb vector field. 

If $N$ admits a Sasakian structure, then in \cite{OV} it is proved that
$N$ also admits a quasi-regular Sasakian structure. The space $X$ of leaves of
a quasi-regular Sasakian structure is a K\"ahler
orbifold with cyclic quotient singularities, and there is an orbifold circle bundle
$S^1 \,\hookrightarrow\, N\,\stackrel{\pi}{\longrightarrow}\, X$ such that the contact
form $\eta$ satisfies the equation $d\eta\,=\,\pi^*\omega$,
where $\omega$ is the orbifold K\"ahler form. 
If $X$ is a K\"ahler manifold, then the Sasakian structure on $N$ is regular.

However, opposed to K\"ahler orbifolds, formality is not an obstruction to the 
existence of a Sasakian structure even on simply connected manifolds \cite{BFMT}. On 
the other hand, all quadruple and higher order Massey products are trivial on any 
Sasakian manifold. In fact, in \cite{BFMT} it is proved that, for any $n\,\geq 3$, 
there exists a simply connected compact regular Sasakian manifold, of dimension 
$2n+1$, which is non-formal, in fact not 3-formal, in the sense of Definition 
\ref{def:primera}. (Note that simply connected compact manifolds of dimension at 
most $6$ are formal \cite{{FM},{N-Miller}}.) In Section \ref{non-formal-sasakian} we 
review these examples and show that they have a 
non-trivial (triple) Massey product, which implies that they are not formal.

Regarding the simply connected compact regular Sasakian manifolds that are formal, 
the odd-dimensional sphere $S^{2n+1}$ is the most basic example of them. 
By Theorem \ref{fm2:criterio2} we know that any 7-dimensional simply connected compact manifold (Sasakian or not)
with $b_2 \,\leq 1$ is formal. 
In \cite{FIM}, examples are given of simply connected formal compact regular Sasakian manifolds, of dimension $7$, with second Betti number $b_2\,\geq 2$.
This result and Proposition \ref{sasak-formal:b2=1} (Section \ref{non-formal-sasakian}) show that, for every $n\,\geq 3$, there exists a simply connected compact regular Sasakian manifold, of 
dimension $2n+1\,\geq 7$, which is formal and has $b_2\,\not=\,0$. 
We end up with an example of a quasi-regular (non-regular)
Sasakian manifold with $b_1=0$ which is non-formal. 
 
\section{ Formality of manifolds}\label{sec:formality}

In this section some definitions and results about minimal models
and Massey products { on smooth manifolds} are reviewed; see \cite{DGMS, FHT} for more details. 

We work with the {\em differential graded commutative algebras}, or DGAs,
over the field $\mathbb R$ of real numbers. The degree of an
element $a$ of a DGA is denoted by $|a|$. A DGA $(\mathcal{A},\,d)$ is {\it minimal\/}
if:
\begin{enumerate}
 \item $\mathcal{A}$ is free as an algebra, that is $\mathcal{A}$ is the free
 algebra $\bigwedge V$ over a graded vector space $V\,=\,\bigoplus_i V^i$, and

 \item there is a collection of generators $\{a_\tau\}_{\tau\in I}$
indexed by some well ordered set $I$, such that
 $|a_\mu|\,\leq\, |a_\tau|$ if $\mu \,< \,\tau$ and each $d
 a_\tau$ is expressed in terms of the previous $a_\mu$, $\mu\,<\,\tau$.
 This implies that $da_\tau$ does not have a linear part.
 \end{enumerate}

In our context, the main example of DGA is the de Rham complex $(\Omega^*(M),\,d)$
of a smooth manifold $M$, where $d$ is the exterior differential.

The cohomology of a differential graded commutative algebra $(\mathcal{A},\,d)$ is
denoted by $H^*(\mathcal{A})$. $H^*(\mathcal{A})$ is naturally a DGA with the 
product inherited from that on $\mathcal{A}$ while the differential on
$H^*(\mathcal{A})$ is identically zero.

A DGA $(\mathcal{A},\,d)$ is called {\it connected} if 
$H^0(\mathcal{A})\,=\,\RR$, and it is called {\em $1$-connected\/} if, in 
addition, $H^1(\mathcal{A})\,=\,0$.

Morphisms between DGAs are required to preserve the degree and to commute with the 
differential. We shall say that $(\bigwedge V,\,d)$ is a {\it minimal model} of a 
differential graded commutative algebra $(\mathcal{A},\,d)$ if $(\bigwedge V,\,d)$ 
is minimal and there exists a morphism of differential graded algebras 
$$\rho\,\colon\, {(\bigwedge V,\,d)}\,\longrightarrow\, {(\mathcal{A},\,d)}$$ 
inducing an isomorphism $\rho^*\,\colon\, H^*(\bigwedge 
V)\,\stackrel{\sim}{\longrightarrow}\, H^*(\mathcal{A})$ of cohomologies. 
{ In~\cite{Halperin}, Halperin proved that any connected differential graded algebra 
$(\mathcal{A},\,d)$ has a minimal model unique up to isomorphism.
For $1$--connected differential algebras, a similar result was proved by Deligne, Griffiths,
Morgan and Sullivan~\cite{DGMS,GM,Su}.}

A {\it minimal model\/} of a connected smooth manifold $M$
is a minimal model $(\bigwedge V,\,d)$ for the de Rham complex
$(\Omega^*(M),\,d)$ of differential forms on $M$. If $M$ is a simply
connected manifold, then the dual of the real homotopy vector
space $\pi_i(M)\otimes \RR$ is isomorphic to the space $V^i$ of generators in degree $i$, for any $i$.
The latter also happens when $i\,>\,1$ and $M$ is nilpotent, that
is, the fundamental group $\pi_1(M)$ is nilpotent and its action
on $\pi_j(M)$ is nilpotent for all $j\,>\,1$ (see~\cite{DGMS}).

We say that a DGA $(\mathcal{A},\,d)$ is a {\it model} of a manifold $M$
if $(\mathcal{A},\,d)$ and $M$ have the same minimal model. Thus, if $(\bigwedge V,\,d)$ is the minimal
model of $M$, we have 
$$
 (\mathcal{A},\,d)\, \stackrel{\nu}\longleftarrow\, {(\bigwedge V,\, d)}\, \stackrel{\rho}\longrightarrow\, (\Omega^{*}(M),\,d),
$$
where $\rho$ and $\nu$ are quasi-isomorphisms, meaning morphisms of DGAs such that the
induced homomorphisms in cohomology are isomorphisms.

Recall that a minimal algebra $(\bigwedge V,\,d)$ is called
{\it formal} if there exists a
morphism of differential algebras $\psi\,\colon\, {(\bigwedge V,\,d)}\,\longrightarrow\,
(H^*(\bigwedge V),\,0)$ inducing the identity map on cohomology.
A DGA $(\mathcal{A},d)$ is formal if its minimal model is formal.

A smooth manifold $M$ is called {\it formal\/} if its minimal model is
formal. Many examples of formal manifolds are known: spheres, projective
spaces, compact Lie groups, symmetric spaces, flag manifolds,
and compact K\"ahler manifolds.

The formality property of a minimal algebra is characterized as follows.

\begin{proposition}[\cite{DGMS}]\label{prop:criterio1}
A minimal algebra $(\bigwedge V,\,d)$ is formal if and only if the space $V$
can be decomposed into a direct sum $V\,=\, C\oplus N$ with $d(C) \,=\, 0$
and $d$ injective on $N$, such that every closed element in the ideal
$I(N)$ in $\bigwedge V$ generated by $N$ is exact.
\end{proposition}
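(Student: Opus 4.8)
The plan is to prove the two implications separately: the ``if'' direction by an explicit construction, and the ``only if'' direction via the bigraded minimal model of $(H^*(\bigwedge V),0)$. For the ``if'' direction, assume $V=C\oplus N$ as in the statement. As a graded vector space $\bigwedge V=\bigwedge C\oplus I(N)$, where $\bigwedge C$ is the subalgebra generated by $C$ and $I(N)$ is the ideal generated by $N$; moreover $d$ vanishes on $\bigwedge C$ (from $d(C)=0$ and the Leibniz rule, so $[c]$ is defined for $c\in\bigwedge C$), and the $\bigwedge C$-component of any product equals the product of the $\bigwedge C$-components of the factors, because $I(N)$ is an ideal. Define $\psi\colon(\bigwedge V,d)\to(H^*(\bigwedge V),0)$ by $\psi(c)=[c]$ for $c\in\bigwedge C$ and $\psi=0$ on $I(N)$; the previous remark makes $\psi$ a morphism of graded algebras. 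The key point is that $\psi(z)=[z]$ for every cocycle $z\in\bigwedge V$: writing $z=c+b$ with $c\in\bigwedge C$ and $b\in I(N)$, from $dz=0$ and $dc=0$ we get $db=0$, so $b$ is a cocycle in $I(N)$, hence exact by hypothesis, whence $[z]=[c]=\psi(z)$. Consequently $\psi(dx)=[dx]=0$ for all $x$, so $\psi$ is a chain map, and $\psi_*[z]=\psi(z)=[z]$, so $\psi$ induces the identity on cohomology; thus $(\bigwedge V,d)$ is formal.

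For the ``only if'' direction, let $\psi\colon(\bigwedge V,d)\to(H,0)$ realize formality, with $H:=H^*(\bigwedge V)$. Reading the computation above in reverse, $\psi(z)=[z]$ for every cocycle $z$, so $\ker\psi\cap\ker d$ is exactly the space of coboundaries. The subspace $C$ is then forced: $d(C)=0$ gives $C\subseteq Z:=\ker(d|_V)$, injectivity of $d|_N$ gives $N\cap Z=0$, and together with $V=C\oplus N$ this yields $C=Z$. So the task is to choose the complement $N$ of $Z$ in $V$ so that every cocycle in $I(N)$ is exact, equivalently (by the first remark) so that $\psi$ annihilates every cocycle in $I(N)$. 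One cannot simply arrange $\psi|_{I(N)}=0$, i.e. $\psi|_N=0$: that would require $\ker(\psi|_V)$ to contain a complement of $Z$, which fails in general, since $\psi(v)$ for a generator $v$ can be a decomposable class not represented by any generator.

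Instead the plan is to build the bigraded minimal model of $(H,0)$: a minimal model $\rho\colon(\bigwedge W,d)\to(H,0)$ constructed stagewise while recording a lower grading $W=\bigoplus_{p\geq 0}W_{(p)}$, where $W_{(0)}$ consists of cocycle generators mapping onto a set of algebra generators of $H$, and each later generator $x$ with $dx\in(\bigwedge W)_{(p-1)}$ is placed in $W_{(p)}$ so as to kill, at that stage, all cohomology not yet matched with $H$. Then $d$ is homogeneous of lower-degree $-1$, the subalgebra $(\bigwedge W)_{(0)}=\bigwedge W_{(0)}$ carries zero differential and surjects onto $H$, and the cohomology of $(\bigwedge W,d)$ is pure, i.e. concentrated in lower-degree $0$. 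Setting $\widetilde C:=W_{(0)}$ and $\widetilde N:=\bigoplus_{p\geq 1}W_{(p)}$, so $I(\widetilde N)=\bigoplus_{p\geq 1}(\bigwedge W)_{(p)}$, any cocycle in $I(\widetilde N)$ splits into its homogeneous lower-degree pieces, each a cocycle in positive lower degree, hence exact with a primitive one lower-degree higher, still in $I(\widetilde N)$; thus $(\bigwedge W,d)$ has the required decomposition. Finally, by uniqueness of the minimal model (Halperin, recalled above) there is an isomorphism $(\bigwedge V,d)\cong(\bigwedge W,d)$, and transporting $\widetilde C\oplus\widetilde N$ back along it gives the desired $C\oplus N$ for $(\bigwedge V,d)$, since an isomorphism of DGAs preserves cocycles, injectivity of $d$, and exactness of cocycles lying in a generated ideal.

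The step I expect to be the main obstacle is the stagewise construction of the bigraded model together with the purity statement (vanishing of cohomology in every positive lower degree): one must verify that at each stage the new ``relation'' generators can be chosen to correct the cohomology in exactly the expected bidegree while creating no new cohomology elsewhere, and that the procedure is compatible with the well-ordering required by minimality. The remaining bookkeeping with $I(N)$ and with cohomology-class maps, in both directions, is then routine.
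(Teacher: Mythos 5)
The paper itself gives no proof of this proposition --- it is quoted from \cite{DGMS} --- so I am judging your argument against the standard one. Your ``if'' direction is complete and correct: the decomposition $\bigwedge V=\bigwedge C\oplus I(N)$, the algebra map $\psi$ killing $I(N)$, and the observation that $\psi(z)=[z]$ for every cocycle $z$ (which simultaneously gives the chain-map property and the identity on cohomology) is exactly the standard argument. Your ``only if'' direction also follows the standard route (the bigraded model of $(H,0)$ in the sense of Halperin--Stasheff, the vanishing of its cohomology in positive lower degree, and uniqueness of minimal models), and you have correctly identified that the construction of the bigraded model together with its purity is where the real work sits; as written this is a deferral of the technical core rather than a proof of it, but you acknowledge that.

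There is, however, an unacknowledged gap in your final transport step. A DGA isomorphism $\phi\colon(\bigwedge W,d)\to(\bigwedge V,d)$ need not carry $W$ into $V$: it carries $W$ onto another generating subspace $V':=\phi(W)$ (a complement of the decomposables in $\bigwedge^{+}V$), so $\phi(\widetilde C)\oplus\phi(\widetilde N)$ is a decomposition of $V'$, not of the given $V$, and your closing remark that isomorphisms preserve cocycles, injectivity and exactness does not address this. The point is not a removable technicality, because the criterion is genuinely sensitive to the choice of generating subspace. Consider $\bigwedge(t,a,v,u)$ with $|t|=1$, $|a|=2$, $|v|=3$, $|u|=4$, $dt=da=0$, $dv=a^{2}$, $du=ta^{2}$. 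This is minimal, and with the generators $\{t,a,v,u\}$ the decomposition is forced to be $C=\langle t,a\rangle$, $N=\langle v,u\rangle$; then $u+tv$ is a closed, non-exact element of $I(N)$, so the criterion fails. Yet the algebra is formal: rewriting it on the generators $\{t,a,v,u+tv\}$ one gets $C=\langle t,a,u+tv\rangle$, $N=\langle v\rangle$, and the only closed element of $I(N)$ is $0$, so formality follows from your own ``if'' direction. Hence your argument proves the proposition with ``the space $V$'' read as ``some generating subspace of the minimal algebra'' --- which is how the criterion is meant and used, but you should say so explicitly rather than claim the decomposition is obtained for the $V$ one started with.
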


This characterization of formality can be weakened using the concept of
$s$-formality introduced in \cite{FM}.

\begin{definition}\label{def:primera}
A minimal algebra $(\bigwedge V,\,d)$ is $s$-formal
($s\,>\, 0$) if for each $i\,\leq\, s$
the space $V^i$ of generators of degree $i$ decomposes as a direct
sum $V^i\,=\,C^i\oplus N^i$, where the spaces $C^i$ and $N^i$ satisfy
the following conditions:
\begin{enumerate}

\item $d(C^i) = 0$,

\item the differential map $d\,\colon\, N^i\,\longrightarrow\, \bigwedge V$ is
injective, and

\item any closed element in the ideal
$I_s=I(\bigoplus\limits_{i\leq s} N^i)$, generated by the space
$\bigoplus\limits_{i\leq s} N^i$ in the free algebra $\bigwedge
(\bigoplus\limits_{i\leq s} V^i)$, is exact in $\bigwedge V$.
\end{enumerate}
\end{definition}

A smooth manifold $M$ is $s$-formal if its minimal model
is $s$-formal. Clearly, if $M$ is formal then $M$ is $s$-formal for every $s\,>\,0$.
The main result of \cite{FM} shows that sometimes the weaker
condition of $s$-formality implies formality.

\begin{theorem}[\cite{FM}]\label{fm2:criterio2}
Let $M$ be a connected and orientable compact differentiable
manifold of dimension $2n$ or $(2n-1)$. Then $M$ is formal if and
only if it is $(n-1)$-formal.
\end{theorem}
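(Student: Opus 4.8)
The plan is to treat the two implications separately: the forward one is immediate and dimension–free, while the reverse one is where the bound on $\dim M$ is genuinely used, reducing to one purely algebraic statement plus an application of Poincar\'e duality.

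\textbf{Formal $\Rightarrow$ $(n-1)$-formal.} If $M$ is formal, Proposition~\ref{prop:criterio1} gives a splitting $V=C\oplus N$ of the generators of its minimal model $(\bigwedge V,d)$ with $d(C)=0$, $d$ injective on $N$, and every closed element of $I(N)$ exact. Taking $C^i=C\cap V^i$ and $N^i=N\cap V^i$ yields splittings of each $V^i$ fulfilling conditions (1) and (2) of Definition~\ref{def:primera}, and (3) holds because $I_s\subseteq I(N)$ for every $s$. Hence $M$ is $s$-formal for all $s>0$, in particular $(n-1)$-formal. (This step uses nothing about the dimension.)

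\textbf{$(n-1)$-formal $\Rightarrow$ formal.} Let $(\bigwedge V,d)$ be the minimal model of $M$. I would fix, for $i\le n-1$, the splitting $V^i=C^i\oplus N^i$ afforded by $(n-1)$-formality, and for $i\ge n$ put $C^i=\ker\bigl(d\colon V^i\to\bigwedge V\bigr)$ and pick any vector-space complement $N^i$, so $d(C^i)=0$ and $d$ is injective on $N^i$. With $C=\bigoplus_iC^i$, $N=\bigoplus_iN^i$, Proposition~\ref{prop:criterio1} reduces everything to showing that each closed $a\in I(N)$ is exact, and I would argue by the degree $k=|a|$. If $k\le n-1$, every monomial of $a$ carries an $N$-factor of degree $\le k\le n-1$, so $a$ lies in the ideal $I_{n-1}$ of $\bigwedge\bigl(\bigoplus_{i\le n-1}V^i\bigr)$ and is exact by condition (3) of $(n-1)$-formality. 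If $k>\dim M$, then $H^k(\bigwedge V)\cong H^k(M;\RR)=0$ by compactness, so $a$ is exact. Since $\dim M$ is $2n$ or $2n-1$, what remains is the range $n\le k\le 2n$.

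\textbf{The algebraic core.} The heart of the matter is the purely algebraic claim that \emph{in any $(n-1)$-formal minimal algebra, every closed element of $I(N)$ of degree at most $2n-1$ is exact}, which I would prove by induction on $k=|a|$, the case $k\le n-1$ being the observation just made. For $n\le k\le 2n-1$ I would split a closed $a\in I(N)$ into the sum of those monomials whose distinguished $N$-factor has degree $\le n-1$ --- their sum lies in $I_{n-1}$, hence is exact --- and a remainder built from $N$-generators $y$ of degree $p$ with $n\le p\le k$. For each such $y$, the element $dy$ has degree $p+1\le 2n$ and, by minimality, is a polynomial in generators of degree $<p+1$; subtracting correction terms of the form $d(y\wedge z)$ for suitable $z$ lowers the ``$N$-complexity'' of $a$ without leaving the degree range $\le 2n-1$, and the induction closes because the resulting correction classes lie in degrees where the inductive hypothesis and condition (3) of $(n-1)$-formality apply. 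When $\dim M=2n-1$ this already covers every degree up to $\dim M$, so $M$ is formal.

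\textbf{The top degree, and the main obstacle.} When $\dim M=2n$ the single degree $k=2n$ is left, and this is the step I expect to be the main obstacle: the algebraic core is sharp at degree $2n-1$ (it is around this degree that genuine higher Massey products, the first obstruction to formality, appear), so degree $2n$ cannot be reached by algebra alone, and one must use that $M$ is compact \emph{and orientable}. Poincar\'e duality gives $H^{2n}(M;\RR)\cong\RR$ and makes all cup-product pairings $H^p(M;\RR)\otimes H^{2n-p}(M;\RR)\to H^{2n}(M;\RR)$ perfect; together with the algebraic core --- which places every cohomology class of degree $\le 2n-1$ inside the subalgebra $\bigwedge C$ (indeed inside $\bigwedge\bigl(\bigoplus_{i\le n-1}C^i\bigr)$ in the degrees $\le n-1$) --- this forces the line $H^{2n}(M;\RR)$ also to be represented inside $\bigwedge C$, and a careful bookkeeping of how a closed $a\in I(N)$ of degree $2n$ interacts with such representatives under $d$ then yields that $a$ is exact. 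I expect the delicate point to be exactly this closing argument, since the Poincar\'e-duality reduction naturally keeps shifting the difficulty into the top degree rather than dissolving it. Once every closed element of $I(N)$ is known to be exact, Proposition~\ref{prop:criterio1} gives that the minimal model of $M$, and hence $M$ itself, is formal.
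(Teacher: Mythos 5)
The paper does not actually prove this theorem: it is imported from \cite{FM}, and the only comment the paper makes on its proof (in Section \ref{sec:orbifolds}) is that it ``only uses that the cohomology $H^*(M)$ is a Poincar\'e duality algebra''. Your architecture --- the trivial forward implication, extending the splitting by $C^i=\ker(d|_{V^i})$ for $i\ge n$, reducing to exactness of closed elements of $I(N)$ via Proposition \ref{prop:criterio1}, and disposing of degrees $\le n-1$ and $>\dim M$ --- does match \cite{FM}. But the step you treat as routine, the ``algebraic core'', is where the proposal breaks down, in two ways. First, for $k\ge n$ a monomial whose distinguished $N$-factor has degree $\le n-1$ need \emph{not} lie in $I_{n-1}$: its remaining factors may have degree $\ge n$, so the monomial lies only in the ideal of the full algebra $\bigwedge V$ generated by $\bigoplus_{i\le n-1}N^i$, which is strictly larger than $I_{n-1}\subset\bigwedge\bigl(\bigoplus_{i\le n-1}V^i\bigr)$; condition (3) of Definition \ref{def:primera} says nothing about such elements (and the summands of your decomposition of $a$ are not individually closed, so condition (3) could not be applied to them in any case). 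Bridging exactly this gap is the content of the key lemma of \cite{FM}.

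Second, and fatally, the asserted algebraic core --- that in \emph{any} $(n-1)$-formal minimal algebra every closed element of $I(N)$ of degree $\le 2n-1$ is exact --- is false, so no induction can prove it. Take $n=3$: every simply connected minimal algebra is $2$-formal, since $V^1=0$ and $d(V^2)\subset(\bigwedge^{\ge 2}V)^3=0$ force $N^1=N^2=0$ and $I_2=0$. Yet the minimal model of the simply connected $7$-manifold of Subsection \ref{subsec:7.1} has $H^3=C^3=0$, hence $V^3=N^3$, and the representative $\alpha_1\beta_{23}-\beta_{12}\alpha_2$ of the non-trivial triple Massey product $\langle a_1,a_1,a_2\rangle$ (with $\beta_{12},\beta_{23}\in N^3$) is a closed, non-exact element of $I(N^3)\subset I(N)$ of degree $5=2n-1$. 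So Poincar\'e duality in the stated dimension range must enter throughout the whole band $n\le k\le\dim M$, not only at the top degree: the actual argument shows that for a closed $\xi\in I(N)$ of degree $k$ in this band one has $[\xi]\cup[\alpha]=0$ for every $[\alpha]\in H^{\dim M-k}(M)$ (representable in $\bigwedge C^{\le n-1}$ because $\dim M-k\le n$), and then invokes nondegeneracy of the cup-product pairing; the vanishing of those products is what the lemma on the ideal generated by $N^{\le n-1}$ in $\bigwedge V$ delivers. Your proposal defers all use of duality to degree $2n$ and leaves even that closing step unspecified, so the reverse implication is not established.
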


One can check that any simply connected compact manifold 
is $2$-formal. Therefore, Theorem \ref{fm2:criterio2} implies that any
simply connected compact manifold of dimension at most six
is formal. (This result was proved earlier in \cite{N-Miller}.)

In order to detect non-formality, instead of computing the minimal
model, which is usually a lengthy process, one can use Massey
products, which are obstructions to formality. The simplest type
of Massey product is the triple (also known as ordinary) Massey
product. This will be defined next.

Let $(\mathcal{A},\,d)$ be a DGA (in particular, it can be the de Rham complex
of differential forms on a smooth manifold). Suppose that there are
cohomology classes $[a_i]\,\in\, H^{p_i}(\mathcal{A})$, $p_i\,>\,0$,
$1\,\leq\, i\,\leq\, 3$, such that $a_1\cdot a_2$ and $a_2\cdot a_3$ are
exact. Write $a_1\cdot a_2=da_{1,2}$ and $a_2\cdot a_3=da_{2,3}$.
The {\it (triple) Massey product} of the classes $[a_i]$ is defined as
$$
\langle [a_1],[a_2],[a_3] \rangle \,=\, 
[ a_1 \cdot a_{2,3}+(-1)^{p_{1}+1} a_{1,2}
\cdot a_3] \in \frac{H^{p_{1}+p_{2}+ p_{3} -1}(\mathcal{A})}{[a_1]\cdot H^{p_{2}+ p_{3} -1}(\mathcal{A})+[a_3]\cdot H^{p_{1}+ p_{2} -1}(\mathcal{A})}.
$$

Note that a Massey product $\langle [a_1],[a_2],[a_3] \rangle$ on $(\mathcal{A},\,d_{\mathcal{A}})$
is zero (or trivial) if and only if there exist $\widetilde{x}, \widetilde{y}\in \mathcal{A}$ such that
$a_1\cdot a_2=d_{\mathcal{A}}\widetilde{x}$, \, $a_2\cdot a_3=d_{\mathcal{A}}\widetilde{y}$\, and
$$0\,=\,[ a_1 \cdot \widetilde{y}+(-1)^{p_{1}+1}\widetilde{x}\cdot a_3]
\,\in\, H^{p_{1}+p_{2}+ p_{3} -1}(\mathcal{A})\, .$$

We will use also the following property.

\begin{lemma} \label{lemm:massey-models}
Let $M$ be a connected smooth manifold. Then, Massey products on $M$
can be calculated by using any model of $M$.
\end{lemma}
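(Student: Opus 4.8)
The plan is to exploit the fact that Massey products are invariants of the minimal model, hence transfer across any quasi-isomorphism of DGAs. Recall that by definition a \emph{model} of $M$ is a DGA $(\mathcal{A},d)$ connected to the de Rham complex by a zigzag of quasi-isomorphisms through the minimal model $(\bigwedge V,d)$:
\[
(\mathcal{A},d)\,\stackrel{\nu}{\longleftarrow}\,(\bigwedge V,d)\,\stackrel{\rho}{\longrightarrow}\,(\Omega^*(M),d).
\]
So it suffices to prove the single clean statement: if $\phi\colon(\mathcal{A},d_{\mathcal{A}})\to(\mathcal{B},d_{\mathcal{B}})$ is a quasi-isomorphism of DGAs, then for any triple of cohomology classes $[a_1],[a_2],[a_3]$ on $\mathcal{A}$ with $[a_1][a_2]=0=[a_2][a_3]$, the image of the Massey product set $\langle[a_1],[a_2],[a_3]\rangle$ under $\phi^*$ equals the Massey product set $\langle\phi^*[a_1],\phi^*[a_2],\phi^*[a_3]\rangle$. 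Applying this along each arrow of the zigzag identifies the Massey products computed on $\mathcal{A}$ with those computed on $\Omega^*(M)$, which is exactly the assertion of the lemma.

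The key steps are as follows. First I would verify that $\phi$ sends a defining system to a defining system: if $a_1a_2=d_{\mathcal{A}}a_{1,2}$ and $a_2a_3=d_{\mathcal{A}}a_{2,3}$, then since $\phi$ is a morphism of DGAs, $\phi(a_1)\phi(a_2)=\phi(a_1a_2)=\phi(d_{\mathcal{A}}a_{1,2})=d_{\mathcal{B}}\phi(a_{1,2})$, and similarly for the other product; hence $\phi(a_{1,2}),\phi(a_{2,3})$ form a defining system downstairs, and
\[
\phi\bigl(a_1a_{2,3}+(-1)^{p_1+1}a_{1,2}a_3\bigr)=\phi(a_1)\phi(a_{2,3})+(-1)^{p_1+1}\phi(a_{1,2})\phi(a_3),
\]
so $\phi^*$ maps the chosen representative of $\langle[a_1],[a_2],[a_3]\rangle$ to a representative of $\langle\phi^*[a_1],\phi^*[a_2],\phi^*[a_3]\rangle$. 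This shows $\phi^*\langle[a_1],[a_2],[a_3]\rangle\subseteq\langle\phi^*[a_1],\phi^*[a_2],\phi^*[a_3]\rangle$, and also that $\phi^*$ is compatible with the indeterminacy subgroups $[a_1]\cdot H^*+[a_3]\cdot H^*$ since these are defined using the ring structure, which $\phi^*$ respects. Second, for the reverse inclusion I would use that $\phi$ is a quasi-isomorphism: any defining system $(b_{1,2},b_{2,3})$ for $\langle\phi^*[a_1],\phi^*[a_2],\phi^*[a_3]\rangle$ on $\mathcal{B}$ can be modified, modulo the indeterminacy, to lie in the image of $\phi$, because $\phi^*$ is surjective in cohomology and the difference of two defining systems for the same products differs by cocycles. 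Concretely one shows that the full set $\langle\phi^*[a_1],\phi^*[a_2],\phi^*[a_3]\rangle$, viewed as a coset of the indeterminacy, equals $\phi^*$ of the corresponding coset upstairs; since $\phi^*$ is a bijection on cohomology, this forces equality of the two Massey product sets (in particular, one is zero iff the other is).

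The main obstacle is the bookkeeping for the reverse inclusion: a priori a defining system downstairs need not be the image of one upstairs, so I would argue that changing $b_{1,2}$ by a $d_{\mathcal{B}}$-cocycle (which changes the Massey representative by an element of $[a_1]\cdot H^{p_2+p_3-1}(\mathcal{B})$, i.e.\ stays within the indeterminacy) allows us to assume $b_{1,2}=\phi(a_{1,2})+d_{\mathcal{B}}(\text{something})$ for a suitable $a_{1,2}$ upstairs, using surjectivity of $\phi^*$; the same for $b_{2,3}$. Alternatively, and more cleanly, one invokes the standard fact that the minimal model is an invariant of the quasi-isomorphism type and that Massey products are determined by the minimal model (they can be read off from the quadratic part of the differential and its higher corrections), so all three DGAs in the zigzag have literally the same Massey products by construction; this reduces the lemma to the already-cited uniqueness of the minimal model and the observation that $\rho$ induces a bijection on cohomology compatible with products. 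I would present the direct zigzag argument as the proof, flagging that only the elementary compatibility of $\phi$ with products and differentials, together with $\phi^*$ being an isomorphism on $H^*$, is needed.
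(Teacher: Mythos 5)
Your proposal is correct and follows essentially the same route as the paper: reduce to a single quasi-isomorphism $\varphi\colon(\mathcal{A},d_{\mathcal{A}})\to(\mathcal{B},d_{\mathcal{B}})$ in the zigzag and push a defining system forward, using that $\varphi$ respects products and differentials and that $\varphi^*$ is a ring isomorphism on cohomology. The paper's proof is shorter only because it treats the triple Massey product as a well-defined element of the quotient $H^{p_1+p_2+p_3-1}/\bigl([a_1]\cdot H^{p_2+p_3-1}+[a_3]\cdot H^{p_1+p_2-1}\bigr)$, so the forward computation already determines the coset and your explicit reverse-inclusion argument (modifying a downstairs defining system by cocycles within the indeterminacy) is not needed, though it is a valid way to make the set-level statement precise.
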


\begin{proof}
It is enough to prove the following: $\varphi\,\colon\,(\mathcal{A},\,d_{\mathcal{A}})
\,\longrightarrow\, (\mathcal{B},\,d_{\mathcal{B}})$
is a quasi-isomorphism, then $$\varphi^*(\langle [a_1],[a_2],[a_3] \rangle)
\,=\,\langle [a_1'],[a_2'],[a_3'] \rangle$$ for
$[a_j']=\varphi^*([a_j])$. But this is clear; indeed, take
$a_1\cdot a_2\,=\,d_{\mathcal{A}}{x}$, \, $a_2\cdot a_3\,=\,d_{\mathcal{A}}{y}$\, and let
 $$
 f\,=\,[ a_1 \cdot {y}+(-1)^{p_{1}+1} {x}\cdot a_3]\,\in\,
 \frac{H^{p_{1}+p_{2}+ p_{3} -1}(\mathcal{A})}{[a_1]\cdot
 H^{p_{2}+ p_{3} -1}(\mathcal{A})+[a_3]\cdot H^{p_{1}+ p_{2} -1}(\mathcal{A})}\,
 $$
be its Massey product $\langle [a_1],[a_2],[a_3] \rangle$. Then the elements $a_j'\,=\,
\varphi(a_j)$ satisfy
$a_1'\cdot a_2'\,=\,d_{\mathcal{B}}{x'}$, \, $a_2'\cdot a_3'\,=\,d_{\mathcal{B}}{y'}$,
where $x'\,=\,\varphi(x)$, $y'\,=\,\varphi(y)$. Therefore,
 $$
 f'\,=\,[ a_1' \cdot {y'}+(-1)^{p_{1}+1} {x'}\cdot a_3'] \,=\,\varphi^*(f) \,\in\,
 \frac{H^{p_{1}+p_{2}+ p_{3} -1}(\mathcal{B})}{[a_1']\cdot
 H^{p_{2}+ p_{3} -1}(\mathcal{B})+[a_3']\cdot H^{p_{1}+ p_{2} -1}(\mathcal{B})}\,
 $$
is the Massey product $\langle [a_1'],[a_2'],[a_3'] \rangle$.
 \end{proof}

Now we move to the definition of higher Massey products
(see~\cite{TO}). Given $$[a_{i}]\,\in\, H^*(\mathcal{A}),\ \ 1\,\leq\, i
\,\leq\, t,\ \ t\geq 3\, ,$$
the Massey product $\la [a_{1}],[a_{2}],\cdots,[a_{t}]\ra$, is defined
if there are elements $a_{i,j}$ on $\mathcal{A}$, with $1\,\leq\, i\,\leq \,j\,\leq\,
t$ and $(i,\, j)\,\not=\, (1,\, t)$, such that
 \begin{equation}\label{eqn:gm}
 \begin{aligned}
 a_{i,i}&= a_i\, ,\\
 d\,a_{i,j}&= \sum\limits_{k=i}^{j-1} (-1)^{|a_{i,k}|}a_{i,k}\cdot
 a_{k+1,j}\, .
 \end{aligned}
 \end{equation}
Then the {\it Massey product} is the set of all cohomology classes
 $$
 \la [a_{1}],[a_{2}],\cdots,[a_{t}] \ra
$$
$$
=\, \left\{
 \left[\sum\limits_{k=1}^{t-1} (-1)^{|a_{1,k}|}a_{1,k} \cdot
 a_{k+1,t}\right] \ | \ a_{i,j} \mbox{ as in (\ref{eqn:gm})}\right\}
 \subset H^{|a_{1}|+ \cdots +|a_{t}|
 -(t-2)}(\mathcal{A})\, .
 $$
We say that the Massey product is {\it zero} if
$$0\,\in\, \la [a_{1}],[a_{2}],\cdots,[a_{t}] \ra\, .$$ 
Note that the higher order Massey product $\la [a_1],[a_2],\cdots,[a_t]\ra$ of order $t\geq 4$ is defined if
all the Massey products $\la [a_i],\cdots,[a_{i+p-1}]\ra$ of order $p$, where $3\leq p\leq t-1$ and $1 \leq i \leq t-p+1$,
are defined and trivial. 

Massey products are related to formality by the
following well-known result.

\begin{theorem}[\cite{DGMS,TO}] \label{theo:Massey products}
A DGA which has a non-zero Massey product is not formal.
\end{theorem}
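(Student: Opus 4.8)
The plan is to show directly that formality of a DGA $(\mathcal{A},d)$ forces every (higher) Massey product to contain zero, which is the contrapositive of the statement. By Lemma \ref{lemm:massey-models} and the definition of formality it suffices to work with a formal minimal model $(\bigwedge V,d)$: Massey products are preserved under quasi-isomorphism, so a non-zero Massey product on $\mathcal{A}$ would give a non-zero Massey product on $(\bigwedge V,d)$, and conversely it is enough to kill all Massey products there. So first I would fix a minimal model $\rho\colon(\bigwedge V,d)\to(\mathcal{A},d)$ and reduce the whole question to $(\bigwedge V,d)$.

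Next I would exploit the formality morphism $\psi\colon(\bigwedge V,d)\to(H^*(\bigwedge V),0)$ inducing the identity on cohomology. The key structural point is that $\psi$ is a quasi-isomorphism into a DGA with \emph{zero differential}, and on a DGA with zero differential every Massey product is automatically zero: indeed, to build a defining system as in \eqref{eqn:gm} one simply takes all the auxiliary elements $a_{i,j}$ (for $(i,j)\neq(1,t)$) equal to $0$ once $i<j$, since there the products $a_{i,k}\cdot a_{k+1,j}$ representing a class must already vanish — more precisely, on $(H^*,0)$ a product of cohomology classes that represents zero in cohomology \emph{is} zero as an element, so one can inductively choose $a_{i,j}=0$, and then the representing cocycle $\sum (-1)^{|a_{1,k}|}a_{1,k}\cdot a_{k+1,t}$ is $0$. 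Hence $0\in\langle[\alpha_1],\dots,[\alpha_t]\rangle$ in $(H^*(\bigwedge V),0)$ for any classes $[\alpha_i]$.

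Then I would transport this along $\psi$: since $\psi^*$ is an isomorphism of cohomology, and Massey products are natural under DGA morphisms in the sense that $\psi^*\langle[a_1],\dots,[a_t]\rangle\subseteq\langle\psi^*[a_1],\dots,\psi^*[a_t]\rangle$ (the image of a defining system is a defining system for the images — the computation that verifies the defining relations \eqref{eqn:gm} is carried by $\psi$ because it commutes with products and differentials, exactly as in the triple case treated in the proof of Lemma \ref{lemm:massey-models}), we get $\psi^*\langle[a_1],\dots,[a_t]\rangle\subseteq\{0\}$. Since $\psi^*$ is injective, this forces $0\in\langle[a_1],\dots,[a_t]\rangle$, i.e. the Massey product is zero. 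Combined with the reduction of the first paragraph, this proves that in a formal DGA every defined Massey product contains zero, so a DGA with a non-zero Massey product cannot be formal.

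The main obstacle is bookkeeping rather than conceptual: one must be careful about indeterminacy. A Massey product of order $t$ is only \emph{defined} when the lower-order sub-products vanish, and as a set it has indeterminacy, so the naturality statement is an inclusion of sets, not an equality; one has to check that the defining system built for $(\bigwedge V,d)$ maps to a \emph{valid} defining system for $(H^*,0)$ (this uses that $\psi$ is a DGA morphism, hence respects $\psi(a_{i,i})=\psi(a_i)$ and the relations $d\,a_{i,j}=\sum(-1)^{|a_{i,k}|}a_{i,k}a_{k+1,j}$), and conversely that injectivity of $\psi^*$ lets us conclude. Once the naturality of higher Massey products under quasi-isomorphisms is set up carefully — the triple case being the template already present in the excerpt — the argument is immediate.
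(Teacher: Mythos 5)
The paper itself offers no proof of this theorem --- it is quoted from \cite{DGMS,TO} --- so your proposal has to be judged on its own. For \emph{triple} Massey products your argument is correct and is essentially Lemma \ref{lemm:massey-models} applied to the formality map $\psi\colon(\bigwedge V,d)\to(H^*(\bigwedge V),0)$: the triple product is a single well-defined element of the quotient $H/([a_1]H+[a_3]H)$, naturality is an honest equality there, the product computed in $(H^*,0)$ with $\widetilde x=\widetilde y=0$ is zero, and $\psi^*$ induces an isomorphism of the quotients, so the original product is trivial.

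For higher-order products, however, there is a genuine gap. You correctly show that $0\in\la\psi^*[a_1],\dots,\psi^*[a_t]\ra$ in $(H^*,0)$ (take the all-zero defining system), but you then write that $\psi^*\la[a_1],\dots,[a_t]\ra\subseteq\{0\}$. That is a non sequitur: the Massey product set in $(H^*,0)$ is not $\{0\}$ in general --- since $d=0$, \emph{any} elements $a_{i,j}$ of the right degrees with vanishing products form a defining system, so the set is the full indeterminacy (already for $t=3$ it is $[a_1]H+[a_3]H$, typically nonzero as a subset of $H$). Consequently the one-sided naturality inclusion $\psi^*\la[a_1],\dots,[a_t]\ra\subseteq\la\psi^*[a_1],\dots,\psi^*[a_t]\ra$ together with injectivity of $\psi^*$ tells you only that $\psi^*$ of your particular defining system's value lands \emph{somewhere} in a set that happens to contain $0$; it does not force that value, or any value of the original set, to be $0$. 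What is missing is precisely the hard half of naturality: that a quasi-isomorphism induces an \emph{equality} of Massey product sets, i.e.\ that every defining system in the target (in particular the all-zero one) lifts backward along $\psi$ to a defining system in $(\bigwedge V,d)$ with corresponding value. That lifting statement (Kraines, May, or \cite{TO}) is the actual content of the theorem for $t\geq 4$, and your sketch does not supply it.
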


Another obstruction to the formality is given by the 
$a$-{\it Massey products} introduced in \cite{CFM}, which
generalize the triple Massey product and have the advantage of
being simpler to compute compared to the higher order Massey
products. They are defined as follows. Let $(\mathcal{A},\,d)$ be a DGA, and let
$a, b_1, \ldots, b_n \,\in \,\mathcal{A}$ be
closed elements such that the degree $|a|$ of $a$ is even and $a\cdot b_i$ is
exact for all $i$. Let $\xi_i$ be any form such that $d\xi_i \,=\, a
\cdot b_i$. Then the {\it $n^{th}$ order $a$-Massey
product} of the $b_i$ is the subset
 $$
\la a; b_1,\ldots,b_n \ra
$$
$$
:= \left\{ \left[\sum_i (-1)^{|\xi_1|+\cdots+ |\xi_{i-1}|}
{\xi_1}\cdot\ldots \cdot{\xi_{i-1}} \cdot b_i
\cdot \xi_{i+1}\cdot \ldots\cdot \xi_n\right] \, |\,
 d\xi_i = a\cdot b_i\right\}\subset H^*(\mathcal{A})\, .
$$
We say that the $a$-Massey product is {\it zero} if $0\,\in \,\la
a; b_1,\ldots,b_n\ra$. 

\begin{theorem}[\cite{CFM}] \label{theo:amassey and formality}
A DGA which has a non-zero $a$-Massey product is not
formal.
\end{theorem}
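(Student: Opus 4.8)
The plan is to transcribe the classical proof that a non-trivial ordinary Massey product obstructs formality (\cite{DGMS,TO}, cf.\ Theorem~\ref{theo:Massey products}), the only genuinely new point being that the indeterminacy of an $a$-Massey product is no longer a coset of a subgroup. I argue by contraposition: assuming $(\SA,\,d)$ is formal, I show that every $a$-Massey product $\la a; b_1,\ldots,b_n\ra$ with $n\,\geq\, 2$ contains $0$ (for $n\,=\,1$ this product is just $\{[b_1]\}$, so there is nothing to prove).

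Two preliminary observations are needed. First, the $a$-Massey product depends only on the cohomology classes of $a$ and of $b_1,\ldots,b_n$; the verification is elementary and entirely analogous to the well-definedness of ordinary Massey products, and can be organized using the identity $d(\xi_1\cdots\xi_n)\,=\, a\cdot\sum_i(-1)^{|\xi_1|+\cdots+|\xi_{i-1}|}\xi_1\cdots\xi_{i-1}\cdot b_i\cdot\xi_{i+1}\cdots\xi_n$, which holds because $|a|$ is even; in any case it is contained in \cite{CFM}. Second, if $\varphi\,\colon\,(\SA,\,d_{\SA})\,\longrightarrow\,(\mathcal{B},\,d_{\mathcal{B}})$ is a morphism of DGAs and $\Xi\,=\,\sum_i(-1)^{|\xi_1|+\cdots+|\xi_{i-1}|}\xi_1\cdots\xi_{i-1}\cdot b_i\cdot\xi_{i+1}\cdots\xi_n$ is a representative of $\la a; b_1,\ldots,b_n\ra_{\SA}$, then $\varphi(\Xi)$ is a representative of $\la\varphi(a);\varphi(b_1),\ldots,\varphi(b_n)\ra_{\mathcal{B}}$: this uses only that $\varphi$ is an algebra map which preserves degrees and commutes with the differentials, so that the signs and the relations $d\varphi(\xi_i)\,=\,\varphi(a)\cdot\varphi(b_i)$ are preserved. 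In particular, $\varphi$ sends exact representatives to exact representatives.

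Now let $(\bigwedge V,\,d)$ be the minimal model of $(\SA,\,d)$, with quasi-isomorphism $\rho\,\colon\,(\bigwedge V,\,d)\,\longrightarrow\,(\SA,\,d)$, and, since $\SA$ is formal, with a morphism $\psi\,\colon\,(\bigwedge V,\,d)\,\longrightarrow\,(H^*(\bigwedge V),\,0)$ inducing the identity on cohomology. Given cocycles $a,b_1,\ldots,b_n\,\in\,\SA$ with $|a|$ even and each $a\cdot b_i$ exact, pick cocycles $\alpha,\beta_1,\ldots,\beta_n\,\in\,\bigwedge V$ with $[\rho(\alpha)]\,=\,[a]$ and $[\rho(\beta_i)]\,=\,[b_i]$ (possible since $\rho^*$ is an isomorphism); then $[\alpha]\cdot[\beta_i]\,=\,0$ and $|\alpha|$ is even. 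The core step is to show $0\,\in\,\la\alpha;\beta_1,\ldots,\beta_n\ra_{\bigwedge V}$. Choose $\mu_i$ with $d\mu_i\,=\,\alpha\beta_i$; since $\psi^*$ is surjective and $(H^*(\bigwedge V),\,0)$ has vanishing differential, there are cocycles $z_i\,\in\,\bigwedge V$ with $\psi(z_i)\,=\,-\psi(\mu_i)$, so that $\xi_i\,:=\,\mu_i+z_i$ satisfies $d\xi_i\,=\,\alpha\beta_i$ and $\psi(\xi_i)\,=\,0$. Because $n\,\geq\, 2$, each summand of the resulting representative $\Xi\,=\,\sum_i(-1)^{|\xi_1|+\cdots+|\xi_{i-1}|}\xi_1\cdots\xi_{i-1}\cdot\beta_i\cdot\xi_{i+1}\cdots\xi_n$ contains at least one factor $\xi_j$ with $j\,\not=\,i$, hence $\psi(\Xi)\,=\,0$; as $\psi^*$ is injective, $[\Xi]\,=\,0$, that is, $\Xi$ is exact. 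Applying $\rho$ and invoking the two preliminary observations, $0\,\in\,\la\rho(\alpha);\rho(\beta_1),\ldots,\rho(\beta_n)\ra_{\SA}\,=\,\la a; b_1,\ldots,b_n\ra_{\SA}$, contradicting the assumed non-triviality.

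I expect the only delicate point to be the first preliminary observation, namely the invariance of $\la a; b_1,\ldots,b_n\ra$ under changing the cocycle representatives: unlike for triple Massey products, the set of values is not obtained from one representative by translating with a subgroup, so this invariance must be verified by a direct (straightforward but bookkeeping-heavy) computation rather than read off from the structure of the indeterminacy. Everything else is a line-by-line adaptation of the proof of Theorem~\ref{theo:Massey products}.
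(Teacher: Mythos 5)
The paper offers no proof of this statement --- it is quoted verbatim from \cite{CFM} --- so there is nothing internal to compare against; judged on its own, your argument is correct and is in substance the proof from that reference: transport the data to the minimal model, use the formality morphism $\psi$ to normalize the primitives so that $\psi(\xi_i)=0$, observe that for $n\geq 2$ every summand of the representative $\Xi$ contains some factor $\xi_j$ with $j\neq i$ and hence dies under $\psi$, conclude $[\Xi]=0$ by injectivity of $\psi^*$, and push back to $\mathcal{A}$ via $\rho$. The one ingredient you defer --- invariance of $\la a; b_1,\ldots,b_n\ra$ under changing the cocycle representatives of $[a]$ and of the $[b_i]$ --- is genuinely needed (since $\rho(\alpha)$ is only cohomologous to $a$, not equal to it), and you are right that it is the only bookkeeping-heavy step; citing \cite{CFM} for it is appropriate given that the theorem itself is a citation. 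One quibble: for $n=1$ the product is $\{[b_1]\}$ and the statement as literally read would then be false, so rather than ``nothing to prove'' you should say that the notion is only meaningful for $n\geq 2$ (the paper only ever uses $n=3$).
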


\section{Orbifolds}\label{sec:orbifolds}

{ In this section, we collect some results about smooth orbifolds and formality of these spaces (see \cite{Adem, BG, GM, Kleiner-Lott, S1, S2, Thurston}).

Let $X$ be a topological space. Fix an integer $n\,>\,0$. 
An {\em orbifold chart} $(U, {\widetilde U}, \Gamma, \varphi)$ on $X$ 
consists of an open set $U \,\subset\, X$, a connected and open set
${\widetilde U} \,\subset\, \RR^n$, a f{}inite group $\Gamma\subset \GL(n, \mathbb{R})$ acting smoothly and effectively 
on $\widetilde{U}$, and a continuous map 
$$
\varphi\,\colon\,\widetilde{U}\,\longrightarrow\,U,
$$
 which is  $\Gamma$-invariant 
(that is $\varphi\,=\,\varphi\circ\gamma$, for all $\gamma \in\Gamma$) and such that it induces a homeomorphism 
$$\Gamma\backslash{\widetilde U} \stackrel{\cong}{\longrightarrow} U$$
from the quotient space $\Gamma\backslash{\widetilde U}$ onto $U$.

\begin{definition}\label{def:orbifold}
A smooth orbifold $X$, of dimension $n$, is a Hausdorff, paracompact topological 
 space endowed with an {\em orbifold atlas} $\SA=\{(U_i, {\widetilde U}_i, \Gamma_i, \varphi_i)\}_{i\,\in\,I}$, that is $\SA$ is a family of orbifold charts
which satisfy the following conditions:
 \begin{enumerate}
 \item[i)] $\{U_i\}_{i\,\in\,I}$ is an open cover of $X$;
 \item[ii)] If $(U_i, {\widetilde U}_i, \Gamma_i, \varphi_i)$ and $(U_j, {\widetilde U}_j, \Gamma_j, \varphi_j)$, $i, j\in I$,
 are two orbifold charts, with $U_i\,\cap\,U_j\,\not=\emptyset$, then for each point $p\,\in\,U_i\,\cap\,U_j$ there exists
an orbifold chart $(U_k, {\widetilde U}_k, \Gamma_k, \varphi_k)$ $(k\,\in\, I)$ such that $p\,\in\,U_k\,\subset\,U_i\,\cap\,U_j$;
 \item[iii)] If $(U_i, {\widetilde U}_i, \Gamma_i, \varphi_i)$ and $(U_j, {\widetilde U}_j, \Gamma_j, \varphi_j)$, $i, j\in I$,
 are two orbifold charts, with $U_i\,\subset\,U_j$, then there exist a smooth embedding, called {\em change of charts} (or {\em embedding}
 or {\em gluing map})
 $$\rho_{ij}\,\colon\,{\widetilde U}_i\,\longrightarrow\,{\widetilde U}_j$$
(so that ${\widetilde U}_i$ and $\rho_{ij}({\widetilde U}_i)$ are diffeomorphic) such that
$$
  \varphi_i\,=\,\varphi_j\,\circ\,\rho_{ij}.
$$
 \end{enumerate}
 \end{definition}
 
Note that, in most references, the definition given of orbifold chart $(U, {\widetilde U}, \Gamma, \varphi)$ does not explicitly require
 the condition that the finite group $\Gamma$ is such that $\Gamma\subset \GL(n, \mathbb{R})$. But 
 since smooth actions are locally linearizable
(see  \cite[page 308]{Bredon}), any
orbifold has an atlas consisting of {\em linear charts}, that is charts of
the form $(U_i, \RR^n, \Gamma_i, \varphi_i)$ where  $\Gamma_i$ acts on $\RR^n$ via an orthogonal representation
$\Gamma_i\subset \O(n)$. Since $\Gamma_i$ is finite, we can consider an orbifold atlas on a topological space $X$ 
as given in Definition \ref{def:orbifold}.
 
As with smooth manifolds, two orbifold atlases $\SA$ and ${\SA}'$ on $X$ are said to be {\em equivalent} if 
 $\SA\cup{\SA}'$ is also an orbifold atlas.
Equivalent  atlases on $X$ are regarded as defining the same orbifold structure on $X$.
Every orbifold atlas for $X$ is contained in a unique maximal one, and two
orbifold atlases are equivalent if and only if they are contained in the same
maximal one. 

Now, we consider some important points about Definition \ref{def:orbifold}.
Suppose that $X$ is a smooth orbifold, with two orbifold charts $(U_i, {\widetilde U}_i, \Gamma_i, \varphi_i)$ and 
$(U_j, {\widetilde U}_j, \Gamma_j, \varphi_j)$, such that $U_i\,\subset\,U_j$. 
Let $\rho_{ij}\,\colon\,{\widetilde U}_i\,\longrightarrow\,{\widetilde U}_j$ be a change of charts (in the sense of Definition \ref{def:orbifold}). 
Note that $\rho_{ij} \circ \gamma\,\colon\,{\widetilde U}_i\,\longrightarrow\,{\widetilde U}_j$ is also a change of charts, for all $\gamma\in\Gamma_i$.
We will see that, for $\gamma\in\Gamma_i$, there is an element $\widetilde{\gamma} \,\in\Gamma_j$ such that $\rho_{ij} \circ \gamma=\widetilde{\gamma} \circ\rho_{ij}$.
In \cite{MP} it is proved the following result, which was proved by Satake in \cite{S1} under the added assumption that the
fixed point set has codimension at least two.

\begin{proposition}\cite[Proposition A.1]{MP}\label{lemma1}
Let $(U_i, {\widetilde U}_i, \Gamma_i, \varphi_i)$ and 
$(U_j, {\widetilde U}_j, \Gamma_j, \varphi_j)$ be two orbifold charts on $X$, with $U_i\subset U_j$.
If $\rho_{ij},\, \mu_{ij}\,\colon\,{\widetilde U}_i\,\longrightarrow\,{\widetilde U}_j$ are two change of charts,
then there exists a unique $\gamma_j\in\Gamma_j$ such that $\mu_{ij}\,=\,\gamma_j\circ\rho_{ij}$.
\end{proposition}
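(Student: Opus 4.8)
The plan is to reduce the statement to an elementary fact about finite group actions on connected manifolds, namely that two equivariant maps into $\widetilde{U}_j$ that induce the same map on the quotient must differ by a unique deck transformation, once one knows the local structure of $\rho_{ij}$ and $\mu_{ij}$. First I would record the constraints imposed by Definition \ref{def:orbifold}: both $\rho_{ij}$ and $\mu_{ij}$ are smooth embeddings satisfying $\varphi_i=\varphi_j\circ\rho_{ij}$ and $\varphi_i=\varphi_j\circ\mu_{ij}$. Composing with the quotient projection $\widetilde{U}_j\to\Gamma_j\backslash\widetilde{U}_j\cong U_j$, this says that $\rho_{ij}$ and $\mu_{ij}$ descend to the \emph{same} map $U_i\to U_j$ (the inclusion, read through the orbifold charts). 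Hence for every $\widetilde{x}\in\widetilde{U}_i$ there is some $\gamma(\widetilde{x})\in\Gamma_j$ with $\mu_{ij}(\widetilde{x})=\gamma(\widetilde{x})\cdot\rho_{ij}(\widetilde{x})$; the content of the proposition is that $\gamma(\widetilde{x})$ can be chosen independent of $\widetilde{x}$ and is then unique.

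The main step is a local-to-global argument. Uniqueness of $\gamma_j$ (given existence) is the easy half: if $\gamma_j\circ\rho_{ij}=\gamma_j'\circ\rho_{ij}$ then $\gamma_j^{-1}\gamma_j'$ fixes the open subset $\rho_{ij}(\widetilde{U}_i)$ of $\widetilde{U}_j$ pointwise, and since $\Gamma_j$ acts effectively on the connected $\widetilde{U}_j$, an element of $\Gamma_j\subset \O(n)$ (equivalently, a linear map) fixing a nonempty open set must be the identity; so $\gamma_j=\gamma_j'$. For existence, I would argue that the function $\widetilde{x}\mapsto\gamma(\widetilde{x})$ is locally constant. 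Near a point $\widetilde{x}_0$ with trivial stabilizer in $\Gamma_i$ the maps $\rho_{ij}$ and $\mu_{ij}$ are local diffeomorphisms onto disjoint-or-equal $\Gamma_j$-translates of a neighborhood, and continuity of $\rho_{ij},\mu_{ij}$ together with the fact that the $\Gamma_j$-translates of a small ball are either equal or disjoint forces $\gamma$ to be constant near $\widetilde{x}_0$; the set of such $\widetilde{x}_0$ (the principal stratum) is open, dense and connected in $\widetilde{U}_i$ because $\widetilde{U}_i\subset\RR^n$ is connected and the fixed-point sets of nontrivial elements of $\Gamma_i$ are lower-dimensional linear subspaces (we may take the chart to be linear, as the paper notes after Definition \ref{def:orbifold}). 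So $\gamma$ is constant equal to a single $\gamma_j\in\Gamma_j$ on the principal stratum, and then by continuity $\mu_{ij}=\gamma_j\circ\rho_{ij}$ on all of $\widetilde{U}_i$.

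The step I expect to be the main obstacle is controlling the behavior of $\rho_{ij}$ and $\mu_{ij}$ along the singular strata, i.e.\ showing there is no "jump" of $\gamma$ across a fixed-point hyperplane of some $\gamma_0\in\Gamma_i$. Removing a codimension-one fixed locus could disconnect $\widetilde{U}_i$, which is exactly why the original argument of Satake needed the codimension-$\ge 2$ hypothesis; the fix of \cite{MP} is to use, in addition, the compatibility of the embeddings with the group actions near such a hyperplane — concretely, that $\rho_{ij}\circ\gamma_0 = \widetilde{\gamma}_0\circ\rho_{ij}$ for a suitable $\widetilde{\gamma}_0\in\Gamma_j$ (and similarly for $\mu_{ij}$), which pins down how the two sheets on either side of the hyperplane are identified and shows the value of $\gamma$ propagates across. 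I would therefore handle the principal stratum first to get $\gamma_j$, and then verify equality on the singular strata by a continuity/closedness argument, invoking effectiveness of the $\Gamma_j$-action on the connected $\widetilde{U}_j$ one final time to upgrade "agree on a dense set" to "agree everywhere". For the precise bookkeeping along the singular locus I would simply cite \cite[Proposition A.1]{MP}.
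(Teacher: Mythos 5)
The paper does not actually prove this proposition: it is quoted verbatim from \cite[Proposition A.1]{MP}, with the explicit remark that Satake's original argument required the fixed-point sets to have codimension at least two. So the only meaningful question is whether your sketch closes the gap that forces the appeal to \cite{MP} --- and it does not. Your uniqueness argument and the local constancy of $\widetilde{x}\mapsto\gamma(\widetilde{x})$ near principal points are essentially right (though even there one must check that the $\Gamma_j$-stabilizer of $\rho_{ij}(\widetilde{x}_0)$ is trivial, not merely the $\Gamma_i$-stabilizer of $\widetilde{x}_0$; this follows from injectivity of $\varphi_j$ on the open image $\rho_{ij}(V)$ together with effectiveness, but it is a step, not a triviality). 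The problem is your ``main step'': the assertion that the principal stratum of $\widetilde{U}_i$ is connected is false whenever $\Gamma_i$ contains an element whose fixed set is a hyperplane (already for $\Gamma_i=\{\pm 1\}$ acting on $(-1,1)\subset\RR$), and you concede exactly this one paragraph later. As written, the argument therefore proves the proposition only under Satake's codimension-$\geq 2$ hypothesis, i.e.\ it reproves precisely the weaker statement that the paper is at pains to distinguish this one from.

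For the codimension-one walls you propose two remedies, and neither works as stated. Using the equivariance $\rho_{ij}\circ\gamma_0=\widetilde{\gamma}_0\circ\rho_{ij}$ to propagate $\gamma$ across a wall is circular: that equivariance is exactly the proposition applied to the pair of changes of charts $(\rho_{ij},\,\rho_{ij}\circ\gamma_0)$, and the paper derives the homomorphism $f_{ij}$ of \eqref{eq:changecharts} as a \emph{consequence} of this proposition, not as an input to it. And citing \cite[Proposition A.1]{MP} for ``the precise bookkeeping along the singular locus'' is citing the statement being proved. The missing idea is the one that actually carries the Moerdijk--Pronk argument: an injectivity/germ argument showing that, for a fixed $\gamma\in\Gamma_j$, the locus where $\mu_{ij}=\gamma\circ\rho_{ij}$ is open even at points of a codimension-one fixed locus --- in the one-dimensional example above it is the injectivity of the embedding that forbids the sign from jumping at $0$. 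Without that ingredient the proposal establishes only the easy reductions.
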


As a consequence of Proposition \ref{lemma1}, a change of orbifold charts 
$\rho_{ij}\,\colon\,{\widetilde U}_i\,\longrightarrow\,{\widetilde U}_j$
induces an injective homomorphism $f_{ij}\,\colon\,\Gamma_i\,\longrightarrow\,\Gamma_j$ which is
given by
\begin{equation}\label{eq:changecharts}
\rho_{ij}\circ\gamma\,=\,f_{ij}(\gamma)\circ\rho_{ij},
\end{equation}
that is $\rho_{ij}(\gamma\cdot x)\,=\,f_{ij}(\gamma)\cdot \rho_{ij}(x)$,
for all $\gamma\in\Gamma_i$ and $x\in {\widetilde U}_i$.

Also in \cite{MP} it is proved the following.

\begin{lemma}\cite[Lemma A.2]{MP}\label{lemma2}
Let $(U_i, {\widetilde U}_i, \Gamma_i, \varphi_i)$ and 
$(U_j, {\widetilde U}_j, \Gamma_j, \varphi_j)$ be two orbifold charts on $X$, with $U_i\subset U_j$. Consider
$\rho_{ij}\,\colon\,{\widetilde U}_i\,\longrightarrow\,{\widetilde U}_j$ a change of charts  which is equivariant with respect to
the injective homomorphism
$f_{ij}\,\colon\,\Gamma_i\,\longrightarrow\,\Gamma_j$.
If there exists an element $\gamma_j\in\Gamma_j$ such that $\rho_{ij}({\widetilde U}_i)\cap\gamma_j\cdot\rho_{ij}({\widetilde U}_i)\,\not=\emptyset$,
then $\gamma_j\in{\mathrm{Im}}(f_{ij})$, and so $\rho_{ij}({\widetilde U}_i)=\gamma_j\cdot\rho_{ij}({\widetilde U}_i)$.
\end{lemma}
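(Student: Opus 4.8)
The plan is to reduce everything to the well-known fact that when a finite group $\Gamma$ acts effectively on a connected manifold $\widetilde U$, two points in the same orbit that are ``close together'' are in fact equal when the group element connecting them fixes an open set; more precisely, a group element acting on a connected manifold and fixing a nonempty open set must be the identity. This is the only nontrivial input, and it is exactly the mechanism behind Proposition \ref{lemma1}; here I would spell out how it applies to the present situation.

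First I would set $\widetilde V\,=\,\rho_{ij}(\widetilde U_i)\,\subset\,\widetilde U_j$ and consider the assumed element $\gamma_j\in\Gamma_j$ with $\widetilde V\cap\gamma_j\cdot\widetilde V\,\neq\,\emptyset$. The map $\gamma_j\circ\rho_{ij}\,\colon\,\widetilde U_i\,\longrightarrow\,\widetilde U_j$ is again an embedding, and it is again a change of charts in the sense of Definition \ref{def:orbifold}: indeed $\varphi_j\circ(\gamma_j\circ\rho_{ij})\,=\,\varphi_j\circ\rho_{ij}\,=\,\varphi_i$ since $\varphi_j$ is $\Gamma_j$-invariant. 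Hence both $\rho_{ij}$ and $\mu_{ij}:=\gamma_j\circ\rho_{ij}$ are changes of charts $\widetilde U_i\,\longrightarrow\,\widetilde U_j$, so Proposition \ref{lemma1} is \emph{not} quite what we want to invoke directly on the nose, because the hypothesis there is only about the existence of the two embeddings, not about an overlap condition. Instead I would argue locally: pick $y\in\widetilde V\cap\gamma_j\cdot\widetilde V$, so $y\,=\,\rho_{ij}(x)\,=\,\gamma_j\cdot\rho_{ij}(x')$ for some $x,x'\in\widetilde U_i$; applying $\varphi_j$ and using $\Gamma_j$-invariance gives $\varphi_i(x)\,=\,\varphi_j(\rho_{ij}(x))\,=\,\varphi_j(\rho_{ij}(x'))\,=\,\varphi_i(x')$, so $x'\,=\,\delta\cdot x$ for some $\delta\in\Gamma_i$. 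Replacing $\mu_{ij}$ by $\mu_{ij}\circ\delta^{-1}$ — still a change of charts, by \eqref{eq:changecharts} and $\Gamma_j$-invariance of $\varphi_j$ — we may assume the two embeddings $\rho_{ij}$ and $\mu_{ij}$ agree at the single point $x$.

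Now Proposition \ref{lemma1} applies to the pair $\rho_{ij},\mu_{ij}\,\colon\,\widetilde U_i\,\longrightarrow\,\widetilde U_j$: there is a unique $\delta'\in\Gamma_j$ with $\mu_{ij}\,=\,\delta'\circ\rho_{ij}$ on all of $\widetilde U_i$. Evaluating at $x$, where $\rho_{ij}(x)\,=\,\mu_{ij}(x)$, gives $\delta'\cdot\rho_{ij}(x)\,=\,\rho_{ij}(x)$, i.e. $\delta'$ fixes the point $\rho_{ij}(x)$; but the key point is that $\delta'$ then fixes the whole open set $\rho_{ij}(\widetilde U_i)$ pointwise (since $\mu_{ij}\,=\,\delta'\circ\rho_{ij}$ only if $\delta'\circ\rho_{ij}\,=\,\mu_{ij}\,=\,\rho_{ij}\circ\delta^{-1}\cdots$, more carefully: unwinding the normalizations shows $\delta'$ acts as the identity on $\rho_{ij}(\widetilde U_i)$), and $\Gamma_j$ acts effectively on the connected set $\widetilde U_j$, forcing $\delta'\,=\,\id$. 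Tracking the normalizing elements back, $\gamma_j$ lies in the subgroup of $\Gamma_j$ generated by $\operatorname{Im}(f_{ij})$ and such identity elements, hence $\gamma_j\in\operatorname{Im}(f_{ij})$. Finally, since $f_{ij}$ is the equivariance homomorphism, $\gamma_j\,=\,f_{ij}(\gamma)$ for some $\gamma\in\Gamma_i$ gives $\gamma_j\cdot\rho_{ij}(\widetilde U_i)\,=\,\rho_{ij}(\gamma\cdot\widetilde U_i)\,=\,\rho_{ij}(\widetilde U_i)$, as claimed.

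I expect the main obstacle to be the bookkeeping in the previous paragraph: keeping straight which element of $\Gamma_j$ is the ``unique'' one produced by Proposition \ref{lemma1} versus the original $\gamma_j$, and verifying cleanly that after the normalization the relevant element of $\Gamma_j$ genuinely fixes an open subset of $\widetilde U_j$ (not merely one point) so that effectiveness on the connected manifold $\widetilde U_j$ can be invoked. Everything else — that $\gamma_j\circ\rho_{ij}$ and its right-translates by $\Gamma_i$ are again changes of charts, that $\varphi$-invariance forces orbit coincidence — is routine once Definition \ref{def:orbifold} and \eqref{eq:changecharts} are in hand.
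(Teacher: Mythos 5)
The paper itself gives no proof of this lemma --- it is quoted directly from \cite[Lemma A.2]{MP} --- so I am measuring your argument against the standard proof. Your reductions are fine up to the point where Proposition \ref{lemma1} produces the unique $\delta'\in\Gamma_j$ with $\mu_{ij}=\delta'\circ\rho_{ij}$, where $\mu_{ij}=\gamma_j\circ\rho_{ij}\circ\delta^{-1}$. Unwinding via the equivariance relation \eqref{eq:changecharts} gives $\mu_{ij}=\gamma_j\circ f_{ij}(\delta)^{-1}\circ\rho_{ij}$, so uniqueness yields only the group identity $\delta'=\gamma_j\, f_{ij}(\delta)^{-1}$ together with the fact that $\delta'$ fixes the \emph{single point} $\rho_{ij}(x)$. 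Your assertion that $\delta'$ then fixes the whole open set $\rho_{ij}(\widetilde U_i)$ pointwise does not follow: $\delta'\circ\rho_{ij}$ equals $\mu_{ij}$, not $\rho_{ij}$, and these two embeddings agree only at $x$ by construction. If $\rho_{ij}(x)$ happens to be a singular point, knowing that $\delta'$ lies in its isotropy group is of no use --- indeed the surrounding text uses this very lemma to identify local isotropy groups, so assuming anything about them here would be circular. This is a genuine gap, and it sits exactly at the step you flag as the ``main obstacle.''

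Two standard ways to close it. First, choose the intersection point to be regular: $\rho_{ij}(\widetilde U_i)\cap\gamma_j\cdot\rho_{ij}(\widetilde U_i)$ is open and nonempty, and the fixed-point set of each nontrivial element of the finite group $\Gamma_j$ has empty interior (a finite-order diffeomorphism of a connected manifold fixing an open set pointwise is the identity, by effectiveness). So you may take $y=\rho_{ij}(x)=\gamma_j\cdot\rho_{ij}(\delta\cdot x)$ with $y$ of trivial isotropy; then $\gamma_j f_{ij}(\delta)$ fixes $y$, hence is the identity, and $\gamma_j=f_{ij}(\delta^{-1})$. Second (the Moerdijk--Pronk route), form the $\varphi$-compatible self-embedding $\sigma=\rho_{ij}^{-1}\circ\gamma_j^{-1}\circ\rho_{ij}$ on a connected component of the nonempty open overlap and apply Proposition \ref{lemma1} to $\sigma$ and the inclusion; this identifies $\gamma_j^{-1}$ with $f_{ij}(\gamma)$ on an \emph{open} subset of $\widetilde U_j$, at which point effectiveness legitimately applies. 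Either repair completes the argument; your final deduction of $\rho_{ij}(\widetilde U_i)=\gamma_j\cdot\rho_{ij}(\widetilde U_i)$ from $\gamma_j\in\mathrm{Im}(f_{ij})$ is correct as written.
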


Let $X$ be a smooth orbifold, with an atlas $\{(U_i, {\widetilde U}_i, \Gamma_i, \varphi_i)\}$, and let $p\in X$.
Consider $(U_i, {\widetilde U}_i, \Gamma_i, \varphi_i)$ an orbifold chart around $p$, that is $p= \varphi_{i}(x)\in U_i$ with $x\,\in\,{\widetilde U}_i$, 
and denote by ${\Gamma_i}(x)\,\subset \Gamma_i$ the isotropy
subgroup for the point $x$. Note that, up to conjugation, the group ${\Gamma_i}(x)$ does not depend
on the choice of the orbifold chart around $p$. In fact,  if $(U_i, {\widetilde U}_i, \Gamma_i, \varphi_i)$
is an orbifold chart around $p$ and  
$p= \varphi_{i}(x)=\varphi_{i}(x')\in U_i$ with $x, x'\,\in\,{\widetilde U}_i$, then ${\Gamma_i}(x')$ is conjugate to ${\Gamma_i}(x)$.
(Indeed, there is a group
isomorphism $L_{a}\,\colon\,{\Gamma_i}(x)\,\too\,{\Gamma_i}(x')$ such that, for $\gamma\in{\Gamma_i}(x)$, $L_{a}(\gamma)\,=\,a{\gamma}a^{-1}$ with $a
\,\in \,\GL(n,\RR)$.) 
Moreover, if $(U_j, {\widetilde U}_j, \Gamma_j, \varphi_j)$ is other orbifold chart with
$p=\varphi_j(y)\in U_j$,  
then we have a third orbifold chart $(U_k, {\widetilde U}_k, \Gamma_k, \varphi_k)$ around $p=\varphi_{k}(z)\in U_k$,
together with smooth embeddings 
$$
\rho_{ki}\,\colon\,{\widetilde U}_k\,\longrightarrow\,{\widetilde U}_i, \qquad \rho_{kj}\,\colon\,{\widetilde U}_k\,\longrightarrow\,{\widetilde U}_j, 
$$
and injective homomorphisms
$f_{ki}\,\colon\,\Gamma_k\,\longrightarrow\,\Gamma_i$, $f_{kj}\,\colon\,\Gamma_k\,\longrightarrow\,\Gamma_j$ such that
$\rho_{ki}$ and $\rho_{kj}$ satisfy \eqref{eq:changecharts} with respect to $f_{ki}$ and $f_{kj}$, respectively. Thus, 
$f_{ki}$ and $f_{kj}$ define monomorphisms  $\Gamma_{k}(z)\,\inc\,{\Gamma_i}(x)$
and $\Gamma_{k}(z)\,\inc\,{\Gamma_j}(y)$. But these monomorphisms must be also onto
by Lemma \ref{lemma2}. So, 
$$
\Gamma_{k}(z)\cong \Gamma_{j}(y)\cong \Gamma_i(x).
$$
This justifies that the group ${\Gamma_i}(x)$ is called  the
{\em (local) isotropy group} of $p$, and it is denoted $\Gamma_p$. When $\Gamma_p\not=\Id$, the point $p$ is said to
be a {\em singular} point of the orbifold $X$. 
The points $p$ with $\Gamma_p =\Id$ are called {\em regular} points.
The set of singular points 
$$S=\{p \in X\,\vert\, \Gamma_p\not=\Id\}$$
is called the 
{\em singular locus of the orbifold} $X$ (or {\em orbifold singular set}).
Then $X-S$ is a smooth $n$-dimensional manifold. 

The singular locus can be stratified according to the isotropy groups. For each group $H$,
we have a subset $S_H =\{p\in X\, | \, \Gamma_p=H\}$. It is easily seen that the connected components
of $S_H$ are locally closed smooth submanifolds of $X$. Moreover, the closure $\overline{S}_H$ 
contains components of other $S_{H'}$, with $H<H'$. This is an immediate consequence
of the fact that it holds on every orbifold chart (in an orbifold chart $(U,\widetilde U,\Gamma,\varphi)$, 
the sets $S_H$ are linear subsets of $\widetilde U$). As a consequence, we can give a CW-structure to
$X$ compatible with the stratification, that is, such that the subsets $\overline{S}_H$ are CW-subcomplexes.

Any smooth manifold is a smooth orbifold for which each of the finite groups 
$\Gamma_i$ is the trivial group, so that we get ${\widetilde U}_i$ homeomorphic to $U_i$. 
The most natural examples of orbifolds appear when we take the quotient space  $X=M/\Gamma$
of a smooth manifold $M$ by a f{}inite group $\Gamma$ 
acting smoothly and ef{}fectively on $M$. Let $\pi\colon M\to X$ be  the natural projection.
Note that given un point $p\in M$ with isotropy group $\Gamma_{p}\subset \Gamma$, then
there is a chart $(U, \widetilde U, \phi)$ of $p=\phi(x)\in U$ in $M$, with $U=\phi(\widetilde U)$,
such that $U$ is $\Gamma_{p}$-invariant. 
Then, an orbifold chart of $\pi(p)\in X$ is $(\pi(U), \widetilde U, \Gamma_{p}, \pi\circ\phi)$, 
the change of charts $\rho_{ij}$ are the change of coordinates
on the manifold $M$, and the monomorphisms $f_{ij}$ are the identity map of $\Gamma_{p}$. Such an orbifold 
$$
X=M/\Gamma
$$
 is called {\em effective global quotient} orbifold \cite[Definition 1.8]{Adem}. 

Moreover,  if $M$ is oriented and the action of $\Gamma$ preserves the orientation, then $X$ is an oriented orbifold. 
In general, an orbifold $X$, with atlas $\{(U_i, {\widetilde U}_i, \Gamma_i, \varphi_i)\}$, 
 is {\em oriented} if each $ {\widetilde U}_i$ is oriented, the action of $\Gamma_i$ is 
 orientation-preserving, and all the change of charts 
 $\rho_{ij}\,\colon\,{\widetilde U}_i\,\longrightarrow\,{\widetilde U}_j$ 
 are orientation-preserving.

\begin{definition}[\cite{BG}] \label{def:orbimap}
Let $X$ and $Y$ be two orbifolds (not necessarily of the same dimension) and 
let $\{(U_i, {\widetilde U}_i, \Gamma_i, \varphi_i)\}$ and $\{(V_j, {\widetilde V}_j, \Upsilon_j, \psi_j)\}$ be
atlases for $X$ and $Y$, respectively.
A map $f\,\colon\,X\,\longrightarrow\,Y$ is said to be an {\em orbifold map} (or  {\em smooth map})
if $f$ is a continuous map between the underlying topological spaces, and 
for every point $p\in X$ there are orbifold charts $(U_i, {\widetilde U}_i, \Gamma_i, \varphi_i)$ 
and $(V_i, {\widetilde V}_i, \Upsilon_i, \psi_i)$ for $p$ and $f(p)$, respectively, with $f(U_i)\subset V_i$, a 
differentiable map ${\widetilde f}_i\,\colon\,{\widetilde U}_i\,\longrightarrow\,{\widetilde V}_i$, and a
homomorphism $\varpi_i:\Gamma_i\to \Upsilon_i$ such that ${\widetilde f}_i \circ \gamma=\varpi_i(\gamma)\circ
{\widetilde f}_i$ for all $\gamma\in \Gamma_i$, and 
$$
f_{|\,U_{i}}\circ\varphi_i=\psi_i\circ{\widetilde f}_i.
$$ 
Moreover, $f$ is said to be {\em good} if every map ${\widetilde f}_i$ is 
{\em compatible with the changes of charts} in the following sense:
\begin{enumerate}
\item[i)] if $\rho_{ij}\,\colon\,{\widetilde U}_i\,\longrightarrow\,{\widetilde U}_j$ is a change of charts
for $p$, then there is a change of charts $\mu(\rho_{ij})\,\colon\,{\widetilde V}_i\,\longrightarrow\,{\widetilde V}_j$ for $f(p)$ such that
$$
{\widetilde f}_j\circ\rho_{ij}\,=\,\mu(\rho_{ij})\circ{\widetilde f}_i;  
$$
\item[ii)]  if $\rho_{ki}\,\colon\,{\widetilde U}_k\,\longrightarrow\,{\widetilde U}_i$ is a change of charts
for $p$, then 
$$
\mu(\rho_{ij}\circ \rho_{ki})\,=\, \mu(\rho_{ij})\circ \mu(\rho_{ki}).
$$
\end{enumerate}
Therefore, an orbifold map  $f\,\colon\,X\,\longrightarrow\,Y$ is determined by a smooth map
${\widetilde f}_i\,\colon\,{\widetilde U}_i\,\longrightarrow\,{\widetilde V}_i$, for every orbifold chart   $(U_i, {\widetilde U}_i, \Gamma_i, \varphi_i)$ 
on $X$, such that every  
${\widetilde f}_i$ is $\Gamma_i$-equivariant and compatible with the change of orbifold charts.
\end{definition}
Note that conditions i) and ii) in Definition \ref{def:orbimap} imply that
the composition of orbifold maps is an orbifold map. Moreover, if  $f\,\colon\,X\,\longrightarrow\,Y$ is an orbifold map, then 
there exists an induced homomorphism
from $\Gamma_{p}$ to $\Upsilon_{f(p)}$. Also, considering  $\RR$ as an orbifold, we can define {\em orbifold functions} on an orbifold $X$ as 
orbifold maps $f\,\colon\,X\,\longrightarrow\,\RR$.

Two orbifolds $X$ and $Y$ are said to be {\em diffeomorphic} if there exist orbifold maps
$f\,\colon\,X\,\longrightarrow\,Y$ and $g\,\colon\,Y\,\longrightarrow\,X$ such that
$g\circ f=1_{X}$ and $f\circ g=1_{Y}$, where $1_{X}$ and $1_{Y}$ are the respective identity maps.
Note that a diffeomorphism between orbifolds gives a homeomorphism between the underlying 
topological spaces.

Many of the usual differential geometric concepts that hold for smooth manifolds
also hold for smooth orbifolds; in particular,  the notion of vector bundle \cite[Definition 4.2.7]{BG}.
Using transition maps, orbifold vector bundles can be defined as follows \cite{S2}.

\begin{definition}\label{def:orbibundle}
Let $X$ be a smooth orbifold, of dimension $n$, and let $\{(U_i, {\widetilde U}_i, \Gamma_i, \varphi_i)\}_{i\,\in\,I}$ be
 an atlas on $X$. An {\em orbifold vector bundle over $X$ and fiber $\RR^m$} consists of a smooth orbifold $E$, of dimension $m+n$, and 
an orbifold map $$\pi\,\colon\, E\,\too\, X,$$ 
called {\em projection}, satisfying the following conditions:
\begin{enumerate}
\item[i)] For every orbifold chart $(U_i, {\widetilde U}_i, \Gamma_i, \varphi_i)$ on $X$, there 
exists a homomorphism $$\rho_{i}\,\colon\, \Gamma_{i} \,\too\, \GL(\RR^m)$$ and an orbifold 
chart  $(V_i, {\widetilde V}_i, \Gamma_i, \Psi_i)$ on $E$, such that
$V_i\,=\,\pi^{-1}(U_i)$, ${\widetilde V}_i={\widetilde U}_i \x \RR^m$, the action of $\Gamma_i$
on ${\widetilde U}_i  \x \RR^m$ is the diagonal action (i.e. $\gamma\cdot (x, u)=(\gamma\cdot x, \rho_{i}(\gamma)(u))$, for 
$\gamma\in \Gamma_{i}$, $x\in{\widetilde U}_i$ and for $u\in \RR^m$), and the map
$$
\Psi_i\,\colon\, {\widetilde V}_i={\widetilde U}_i \x \RR^m\,\too\, E_{|U_{i}}\,:=\, \pi^{-1}(U_i)
$$ 
is such that $\pi_{|V_i}\,\circ\,\Psi_i\,=\,\varphi_i\,\circ\,{\mathrm {pr}}_{1}$,
where ${\mathrm {pr}}_{1}\,\colon\, {\widetilde U}_i \x \RR^m\,\too\, {\widetilde U}_i$ is the natural projection, 
$\Psi_i$ is $\Gamma_i$-invariant
for the action of $\Gamma_i$ on ${\widetilde U}_i  \x \RR^m$, and it induces a homeomorphism
$$\Gamma_{i} \backslash ({\widetilde U_i} \x \RR^m)\,\cong\,E_{|U_{i}}$$
\item[ii)] If $(U_i, {\widetilde U}_i, \Gamma_i, \varphi_i)$ and $(U_j, {\widetilde U}_j, \Gamma_j, \varphi_j)$ are
two orbifold charts on $X$, with $U_i\,\subset\,U_j$, and $\rho_{ij}\,\colon\,{\widetilde U}_i\,\longrightarrow\,{\widetilde U}_j$
is a change of charts, then there exists a differentiable map, called {\em transition map}
$$
g_{ij}\,\colon\, {\widetilde U}_i \,\too\, \GL(\RR^m),
$$
and a change of charts
$\lambda_{ij}\,\colon\,{\widetilde V}_i={\widetilde U}_i \x \RR^m\,\longrightarrow\,{\widetilde V}_j={\widetilde U}_j \x \RR^m$ on $E$, such that
$$
\lambda_{ij}(x, u)\,=\,\big(\rho_{ij}(x), g_{ij}(x)(u)\big),
$$
for all $(x, u)\in{\widetilde U}_i \x \RR^m$.
\end{enumerate}

\end{definition}
Note that if $\pi\,\colon\, E\,\too\, X$ is an orbifold vector bundle, and $p\in X$, then the {\em fiber} $\pi^{-1}(p)$
is not always a vector space. In fact, if 
$(U_i, {\widetilde U}_i, \Gamma_i, \varphi_i)$ is an orbifold chart on $X$ around $p=\varphi_i(x)\in X$, then
$$
\pi^{-1}(p)\,\cong\,\Gamma_{p} \backslash (x \x \RR^m)\,\cong\,\Gamma_{p} \backslash \RR^m,
$$
where $\Gamma_{p}=\Gamma_{i}(x)$ is the isotropy group of $p$.
Thus, $\pi^{-1}(p)\,\cong\,\RR^m$ if $p$ is a regular point $(\Gamma_{p}=\Id)$ of $X$,
but $\pi^{-1}(p)$ is not a vector space when $p$ is a singular point. 

\begin{definition}\label{def:orbisection}
A {\em section} (or \emph{orbifold smooth section}) of an orbifold vector bundle $\pi\colon E\,\too\, X$ is an
orbifold map $s \colon X\,\too\, E$ such that $\pi\,\circ\,s\,=\,1_{X}$. Therefore, if $\{(U_i, {\widetilde U}_i, \Gamma_i, \varphi_i)\}$
is an atlas on $X$, then $s$ 
consists of a family of smooth maps $\{s_{i}\,\colon\,{\widetilde U}_i\,\too\, \RR^m\}$, 
such that every  $s_{i}$ is $\Gamma_i$-equivariant and compatible with the changes of charts on $X$
(in the sense of Definition \ref{def:orbimap}). We denote the space of (orbifold smooth) sections
of $E$ by $\SC^\infty(E)$.
\end{definition}

To construct the {\em orbifold tangent bundle} $TX$ of an orbifold $X$, of dimension $n$, we continue to
use the notation of Definition \ref{def:orbibundle}. We define the orbifold charts and the transition maps for $TX$ as follows.
For each orbifold chart $(U_i, {\widetilde U}_i, \Gamma_i, \varphi_i)$
of $X$, we consider the tangent bundle $T{\widetilde U}_i$ over ${\widetilde U}_i$, so $T{\widetilde U}_i\cong{\widetilde U}_i \x \RR^n$.
Take $\rho_{i}\,\colon\, \Gamma_{i} \,\too\, \GL(\RR^n)$ the homomorphism given by the action of $\Gamma_{i}$ on ${\RR}^n$. Then 
$(E_{|\,U_{i}}, {\widetilde U}_{i} \x \RR^n, \Gamma_i, \rho_i, \Psi_{i})$ is an orbifold chart for $TX$, where
$E_{|\,U_{i}} \,=\,\Gamma_{i} \backslash T{\widetilde U}_i$. 
Moreover, if
$\rho_{ij}\,\colon\,{\widetilde U}_i\,\longrightarrow\,{\widetilde U}_j$ is a change of charts for $X$, 
the transition map
$$
g_{ij}\,\colon\, {\widetilde U}_i \,\too\, \GL(\RR^n)
$$ 
for $TX$ is such that
$g_{ij}(x)$ is the Jacobian matrix 
of the map $\rho_{ij}$ at the point $x\in {\widetilde U}_i$. Therefore 
$TX$ is a $2n$-dimensional orbifold, and 
the natural projection $\pi\,\colon\, TX\,\too\, X$ defines a smooth map of orbifolds, with
fibers $\pi^{-1}(p)\,\cong\,\Gamma_{p} \backslash (x \x \RR^n)\,\cong\,\Gamma_{p} \backslash \RR^n$, for $p\in X$.
Therefore, one can consider tangent vectors to $X$ at the point $p\in X$ if $p$ is a regular point.

The orbifold cotangent bundle $T^* X$ and the orbifold tensor bundles are
constructed similarly. Thus, one can consider Riemannian metrics, almost complex structures, 
orbifold forms, connections, etc. 

An (orbifold)  {\em Riemannian metric} $g$ on $X$ 
 is a positive definite symmetric tensor in $T^* X\ox T^*X$. This is equivalent to have, for each orbifold chart  
 $(U_i, {\widetilde U}_i, \Gamma_i, \varphi_i)$ on $X$, a Riemannian metric $g_{i}$ on the open set ${\widetilde U}_i$ that is invariant under the action of $\Gamma_i$ on ${\widetilde U}_i$ ($\Gamma_i$ acts on ${\widetilde U}_i$
by isometries), and the change of charts $\rho_{ij}\,\colon\,{\widetilde U}_i\,\longrightarrow\,{\widetilde U}_j$
are isometries, that is $\rho^{*}_{ij}\big(g_{j}\vert_{\rho_{ij}({\widetilde U}_i)}\big)\,=\,g_{i}$.

An (orbifold)  {\em almost complex structure} $J$ on $X$ 
 is an endomorphism $J\,\colon\,TX\,\longrightarrow\,TX$ such that $J^2\,=\,-\Id$. Thus, $J$ is determined 
 by an almost complex structure $J_i$ on ${\widetilde U}_i$, for every orbifold chart  
 $(U_i, {\widetilde U}_i, \Gamma_i, \varphi_i)$ on $X$, such that the action of $\Gamma_i$ on ${\widetilde U}_i$
 is by biholomorphic maps, and any change of charts  $\rho_{ij}\,\colon\,{\widetilde U}_i\,\longrightarrow\,{\widetilde U}_j$
 is a holomorphic embedding.

An {\em orbifold $p$-form} $\alpha$ on $X$ is a section of $\bigwedge^p T^* X$.
This means that, for each orbifold chart  
 $(U_i, {\widetilde U}_i, \Gamma_i, \varphi_i)$ on $X$, we have
 a differential $p$-form $\alpha_i$ on the open set ${\widetilde U}_i$, such that every 
 $\alpha_i$ is $\Gamma_i$-invariant (i.e. $\gamma^{*}_{i}(\alpha_i)= \alpha_i$, for  $\gamma_i\in\Gamma_i$),
 and any change of charts  $\rho_{ij}\,\colon\,{\widetilde U}_i\,\longrightarrow\,{\widetilde U}_j$
 satisfies  $\rho^{*}_{ij}(\alpha_j)=\alpha_i$.
 
The space of $p$-forms on $X$ is denoted by
$\Omega_{orb}^{p}(X)$. The wedge product of orbifold forms and the 
exterior differential $d$ on $X$ are well defined. Thus, we have 
$$
d\colon\Omega_{orb}^p(X) \,\too\, \Omega_{orb}^{p+1}(X)\, .
$$
The constant sheaf $\RR$ has a resolution
\begin{equation}\label{eb1}
0\,\too\, \RR \,\too\, {\mathbf\varOmega}_{orb}^0 \,\too\,
{\mathbf\varOmega}_{orb}^1 \,\too\, \ldots\, ,
\end{equation}
where ${\mathbf\varOmega}_{orb}^p$
is the sheaf of smooth sections of $\bigwedge^p T^* X$. To prove that
this is a resolution, it is enough to prove that it is exact over any neighborhood of the form
$U\,=\,\widetilde{U}/\Gamma$. As the group $\Gamma$ is finite, it is conjugate to a subgroup of
$\O(n)$,
so we can assume that $\Gamma \subset \O(n)$. We take $\widetilde{U}\,=\,
B_\epsilon(0)$ (the ball in ${\mathbb R}^n$ of radius $\epsilon$ around the origin).
 Then $$0\,\too\,\RR \,\too\, \Omega^0(\widetilde U) \,\too\, \Omega^1(\widetilde U)
\,\too \,\ldots$$ is exact, 
and taking the $\Gamma$-invariant forms, the sequence in \eqref{eb1} is exact as well. (The functor $V\mapsto V^\Gamma$
that sends any vector space $V$ with a $\Gamma$-action, to its $\Gamma$-invariant part, is an exact functor.)
Since \eqref{eb1} is exact, the cohomology of the complex $(\Omega_{orb}^*(X),\,d)$
is isomorphic to the singular cohomology $H^*(X,\, \RR)$. 

We can see more explicitly this isomorphism with duality by pairing with homology classes in singular homology
$H_*(X,\, \RR)$. Recall that we have a CW-complex structure for $X$ such that the singular sets $S_H=\{p\in X \, |
\, \Gamma_p=H\}$ are CW-subcomplexes. Then for an orbifold $k$-form $\alpha$ on $X$ and a $k$-cell
$D\subset X$, we have an integration map $\int_D \alpha$. This is defined as follows: we can assume that
$D$ is inside an orbifold chart $(U,\widetilde U, \Gamma,\varphi)$. Let $D\subset \overline{S}_H$, where $H$ is some
isotropy group, and assume that the interior of $D$ lies in $S_H$. 
Under the quotient map $\pi: \widetilde U\to U$, the preimage of $\pi^{-1}(S_H\cap U)$ is
contained in a linear subspace, and the map $\pi: \pi^{-1}(S_H\cap U) \to S_H\cap U$ is $|H|:1$. 
We define $\int_D \alpha=\frac{|H|}{|\Gamma|}
\int_{\pi^{-1}(D)} \widetilde\alpha$, where $\widetilde\alpha\in \Omega^k(\widetilde U)$ is
the representative of $\alpha$ in the orbifold chart. It is easily seen that this is compatible with the orbifold
changes of charts, and that it satisfies an orbifold version of Stokes theorem.

\begin{remark}\label{rem:forms-co}
Suppose that $X=M/\Gamma$ is an oriented effective global orbifold, that is $X$ is the
quotient of a smooth manifold $M$ by a f{}inite group $\Gamma$ 
acting smoothly and ef{}fectively on $M$. Then, the definition of orbifold forms implies that
any $\Gamma$-invariant differential $k$-form $\alpha$ on $M$ defines an
orbifold $k$-form $\widehat{\alpha}$ on $X$, and vice-versa. Moreover, it is straightforward to
check that the exterior derivative on $M$ preserves $\Gamma$-invariance. Thus, if
$\big(\Omega^{k}(M)\big)^{\Gamma}$ denotes the space of the $\Gamma$-invariant differential $k$-form on $M$,
and ${H^k(M,\, \RR)}^{\Gamma}\subset H^k(M,\, \RR)$ is the subspace of the cohomology classes of degree
$k$ on $M$ such that each of these classes has a representative that is a $\Gamma$-invariant differential $k$-form, then we have 
\begin{equation}\label{orbi-fcoh}
\Omega_{orb}^{k}(X)\,=\,\big(\Omega^{k}(M)\big)^{\Gamma}, \qquad  H^k(X,\, \RR)\,=\,{H^k(M,\, \RR)}^{\Gamma}.
\end{equation}
For any compact supported orbifold $n$-form $\widehat{\alpha}$ on $X$, 
which is by definition a $\Gamma$-invariant compact supported differential $n$-form $\alpha$
on $M$, the integration of $\widehat{\alpha}$ on $X$ is defined by
\begin{equation}\label{orbi-int}
\int_X\, \widehat{\alpha} \,= \, \vert{\Gamma}\vert \int_M\, {\alpha}, 
\end{equation}
where $\vert{\Gamma}\vert$ is the order of the group $\Gamma$. More generally,
one can extend the notion of integration to arbitrary orbifolds by
working in orbifold charts via a partition of unity (\cite[page 34]{Adem},  \cite{S1}).
\end{remark}

\begin{definition}\label{def:orbifolds-minimalmodel}
Let $X$ be an orbifold. A {\it minimal model} for $X$ is a minimal
 model $(\bigwedge V,\,d)$ for the DGA $(\Omega_{orb}^*(X),d)$. The orbifold $X$ is
 {\em formal} if its minimal model is formal (see Section \ref{sec:formality}).
\end{definition}

\begin{proposition} \label{prop:orbifold-model}
  Let $(\bigwedge V,\,d)$ be the minimal model of an orbifold $X$. Then
  $H^*(\bigwedge V)=H^*(X,\, \RR)$, where the latter means singular cohomology
  with real coefficients.
\end{proposition}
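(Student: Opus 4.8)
The plan is to read off the statement by chaining together two isomorphisms that are already in place: the one relating a minimal model to the DGA it models, and the orbifold de Rham isomorphism relating orbifold forms to singular cohomology.

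First I would unwind the definitions. By Definition \ref{def:orbifolds-minimalmodel}, a minimal model of $X$ is a minimal model of the DGA $(\Omega_{orb}^*(X),\,d)$, so (as recalled in Section \ref{sec:formality}) it comes equipped with a morphism of DGAs
$$
\rho\,\colon\,(\bigwedge V,\,d)\,\longrightarrow\,(\Omega_{orb}^*(X),\,d)
$$
inducing an isomorphism $\rho^*\,\colon\, H^*(\bigwedge V)\,\stackrel{\sim}{\longrightarrow}\, H^*(\Omega_{orb}^*(X),\,d)$. Thus $H^*(\bigwedge V)$ is canonically identified with the de Rham cohomology of the complex of orbifold forms.

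Second, I would invoke the orbifold de Rham theorem, which is exactly the content of the discussion around \eqref{eb1}: the complex of sheaves $\mathbf\varOmega_{orb}^*$ is a resolution of the constant sheaf $\RR$ on $X$ (exactness being checked in a model chart $\widetilde U/\Gamma$ with $\widetilde U$ a ball, using that the $\Gamma$-invariants functor is exact), and each $\mathbf\varOmega_{orb}^p$ is a fine sheaf because orbifolds admit partitions of unity subordinate to any open cover, built in orbifold charts as indicated in Remark \ref{rem:forms-co} and the references cited there. Hence the global sections of this resolution compute sheaf cohomology, giving $H^*(\Omega_{orb}^*(X),\,d)\,\cong\, H^*(X,\,\RR)$ (sheaf cohomology of $\RR$). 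Since the underlying space $X$ is paracompact Hausdorff and locally contractible — every point has a neighborhood $\widetilde U/\Gamma$, a cone, which deformation retracts to the image of the origin — this sheaf cohomology agrees with singular cohomology with real coefficients; alternatively, one uses the CW structure on $X$ compatible with the stratification by isotropy type constructed above, together with the integration pairing $\alpha\mapsto\int_D\alpha$ on cells and the orbifold Stokes theorem, to produce the de Rham isomorphism on the level of cellular cochains directly.

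Composing the two isomorphisms yields $H^*(\bigwedge V)\,\cong\, H^*(X,\,\RR)$, which is the assertion of Proposition \ref{prop:orbifold-model}. There is no genuine obstacle in this argument: it is an assembly of facts already recorded in the excerpt, and the only point deserving care is the acyclicity of the sheaves $\mathbf\varOmega_{orb}^p$ — equivalently, the existence of orbifold partitions of unity — and the identification of the target with \emph{singular} (not merely sheaf) cohomology, which rests on the local contractibility, or the explicit CW structure, of the topological space underlying $X$.
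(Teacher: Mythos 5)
Your argument is correct and follows exactly the route the paper intends: the paper gives no separate proof of Proposition \ref{prop:orbifold-model} because it is the immediate combination of Definition \ref{def:orbifolds-minimalmodel} (the minimal model is quasi-isomorphic to $(\Omega_{orb}^*(X),\,d)$) with the orbifold de Rham theorem established via the sheaf resolution \eqref{eb1}. Your added remarks on fineness of the sheaves ${\mathbf\varOmega}_{orb}^p$ and on identifying sheaf with singular cohomology merely make explicit what the paper leaves implicit, so there is nothing to correct.
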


For a simply connected orbifold $X$, the dual of the real homotopy vector
space $\pi_i(X)\otimes \RR$ is isomorphic to the space $V^i$ of generators in degree $i$, for any $i$, where
$\pi_i(X)$ is the homotopy group of order $i$ of the underlying topological space in $X$.
In fact, the proof given in \cite{DGMS} for simply connected manifolds, also works 
for simply connected orbifolds (that is, orbifolds for which the topological space $X$ is simply connected).

Moreover, the proof of Theorem \ref{fm2:criterio2} given in \cite{FM}
only uses that the cohomology $H^*(M)$ is a Poincar\'e duality algebra. By \cite{S1}, we know that
the singular cohomology of an orbifold also satisfies a Poincar\'e duality.
Thus, Theorem \ref{fm2:criterio2} also holds 
for compact connected orientable orbifolds. Hence, we have the following lemma.

\begin{lemma}\label{fm2:orbifold} 
Any simply connected compact orbifold of dimension at most $6$
is formal.
\end{lemma}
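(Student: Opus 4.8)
The plan is to deduce Lemma \ref{fm2:orbifold} as a direct corollary of the preceding discussion, which has already done the bulk of the work. First I would note that the key structural input has been established: by \cite{S1} the singular cohomology $H^*(X,\,\RR)$ of a compact oriented orbifold satisfies Poincar\'e duality, and by Proposition \ref{prop:orbifold-model} the minimal model $(\bigwedge V,\,d)$ of $X$ computes precisely this cohomology, $H^*(\bigwedge V)\,=\,H^*(X,\,\RR)$. Hence the cohomology algebra of the minimal model of $X$ is a Poincar\'e duality algebra of formal dimension $n\,=\,\dim X$.

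Next I would invoke the remark already made in the excerpt that the proof of Theorem \ref{fm2:criterio2} in \cite{FM} uses only that $H^*(M)$ is a Poincar\'e duality algebra, so that Theorem \ref{fm2:criterio2} holds verbatim for compact connected orientable orbifolds: such an orbifold $X$ of dimension $2n$ or $(2n-1)$ is formal if and only if it is $(n-1)$-formal. Then I would recall the second ingredient: any simply connected compact manifold is $2$-formal, and the argument establishing this — which works at the level of the minimal model using only $1$-connectedness — applies unchanged to a simply connected compact orbifold, since $(\Omega_{orb}^*(X),\,d)$ is a $1$-connected DGA whenever the underlying space $X$ is simply connected.

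Finally I would combine these. If $X$ is a simply connected compact orbifold with $\dim X\,\le\,6$, write $\dim X\,=\,2n$ or $(2n-1)$; in either case $n\,\le\,3$, so $n-1\,\le\,2$. Since $X$ is $2$-formal it is in particular $(n-1)$-formal, and therefore formal by the orbifold version of Theorem \ref{fm2:criterio2}. (If $X$ is not orientable one passes to the orientation orbifold double cover, but in the simply connected case $X$ is automatically orientable, so this is not needed.) The only genuine point requiring care — and the one I would flag as the main obstacle — is verifying that the two cited proofs from \cite{FM} and \cite{DGMS} really do carry over to orbifolds using only the properties listed (Poincar\'e duality of $H^*$, and $1$-connectedness of the de Rham DGA); but this verification has essentially been supplied in the paragraphs immediately preceding the lemma, so the proof reduces to assembling those observations.
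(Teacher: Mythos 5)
Your proposal is correct and follows essentially the same route as the paper: the paper deduces the lemma from the observation that Theorem \ref{fm2:criterio2} extends to compact connected orientable orbifolds because its proof in \cite{FM} uses only Poincar\'e duality of the cohomology (valid for orbifolds by \cite{S1}), combined with the fact that simply connected compact spaces are $2$-formal. Your additional remarks on orientability and on the $1$-connectedness of $(\Omega_{orb}^*(X),d)$ are accurate and only make explicit what the paper leaves implicit.
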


The notion of formality is also defined for CW-complexes which
have a minimal model $(\bigwedge V,\,d)$. Such a minimal model is
constructed as the minimal model associated to the differential
complex of piecewise-linear polynomial forms \cite{FHT, GM}. 
In particular, we have a minimal model $(\bigwedge V,\,d)$ for orbifolds.

\section{Elliptic differential operators on orbifolds}\label{sec:ellipticoperators}

Here we study elliptic differential operators on complex orbifolds by adapting to these spaces the elliptic operator theory 
on complex manifolds  \cite[Chapter IV]{Wells}.

A {\em complex orbifold}, of complex dimension $n$, is an orbifold $X$ whose orbifold charts are of the form 
$\{(U_i, {\widetilde U}_i, \Gamma_i, \varphi_i)\}$, where ${\widetilde U}_i\,\subset\, \CC^n$,
$\Gamma_i \,\subset\, \GL(n,\CC)$ is a finite group acting on ${\widetilde U}_i$ by biholomorphisms, 
and all the changes of charts $\rho_{ij}\,\colon\,{\widetilde U}_i\,\longrightarrow\,{\widetilde U}_j$ are holomorphic embeddings.
Thus, any complex orbifold has associated an almost complex structure $J$.

If $X$ and $Y$ are complex orbifolds, a map $f\,\colon\,X\,\longrightarrow\,Y$ is said to be an
{\em orbifold holomorphic map} (or  {\em holomorphic map})
if $f$ is a continuous map between the underlying topological spaces, and 
for every point $p\in X$ there are orbifold charts $(U_i, {\widetilde U}_i, \Gamma_i, \varphi_i)$ 
and $(V_i, {\widetilde V}_i, \Upsilon_i, \psi_i)$ for $p$ and $f(p)$, respectively, with $f(U_i)\subset V_i$, and a 
holomorphic map ${\widetilde f}_i\,\colon\,{\widetilde U}_i\,\longrightarrow\,{\widetilde V}_i$ 
such that ${\widetilde f}_i$ is $\Gamma_i$-{\em equivariant} and compatible with changes of charts
(in the sense of Definition \ref{def:orbimap}).

Similarly to orbifold vector bundles, one can define {\em complex orbifold vector bundles}. Let $X$ be a complex orbifold, of complex dimension $n$. 
A {\em complex orbifold vector bundle} over $X$ and fiber $\CC^m$ consists of a complex orbifold $E$, of complex dimension $m+n$, and 
a holomorphic orbifold map $$\pi\,\colon\, E\,\too\, X,$$ 
such that the atlas on $E$ has charts of the type 
$(E_{|U_{i}}, \,{\widetilde U}_i \x \CC^m,  \,\Gamma_i,  \, \rho_i, \, \Psi_i)$,
where $\rho_{i}\colon\Gamma_i \,\too\, \GL(\CC^m)$
is a homomorphism, and 
$$
 \Psi_i\colon{\widetilde U}_i \x \CC^m \,\too\, E_{|U_{i}}\,:=\, \pi^{-1}(U)
$$
is a $\Gamma_i$-invariant map, for the diagonal action of $\Gamma_i$ on ${\widetilde U}_i \x \CC^m$ (the
group $\Gamma_i$ acts on $\CC^m$ via $\rho_i$),
with $\Gamma_i \backslash ({\widetilde U}_i \x \CC^m)\,\cong\,
E_{|U_{i}}$.

A {\em Hermitian metric} $h$ on $X$ is a collection $\{h_i\}$, where each $h_i$ is a Hermitian metric on the open set ${\widetilde U}_i$ of the
(complex) orbifold chart $(U_i,\, {\widetilde U}_i, \,{\Gamma}_i, \,{\varphi}_i)$ on $X$, such that every $h_i$ is 
$\Gamma_i$-invariant, and all the changes of charts $\rho_{ij}\,\colon\,{\widetilde U}_i\,\longrightarrow\,{\widetilde U}_j$
are given by holomorphic and isometric embeddings.  
A slight modification of the usual partition of unity argument shows that every complex orbifold has a Hermitian metric \cite{MM}.

Complex orbifold forms on a complex orbifold and 
the orbifold Dolbeault cohomology will be considered in Section \ref{sec:kahler-orb}. 

Let $E\,\to\, X$ be a complex orbifold vector bundle endowed with a Hermitian metric. 
A {\em Hermitian connection} $\nabla$ on $E$ is defined to be
a collection $\{\nabla_i\}$, where each $\nabla_i$ is a 
$\Gamma_i$-equivariant Hermitian connection on 
${\widetilde U}_i$, for every complex orbifold chart $(U_i,\, {\widetilde U}_i, \,{\Gamma}_i, \,{\varphi}_i)$ on $X$, and such that
$\nabla_i$  is compatible with changes of charts.
Using $\nabla$, we can define
{\em Sobolev norms on sections} of $E$. For a section $s$ supported on a chart $U_i$,
define 
  $$
 ||s||_{W^m(E{|_{U_i}})} \,:=\,\frac{1}{|\Gamma_i|}  ||s||_{W^m({\widetilde U}_i)}\, ,
$$ 
where $W^m$ denotes the usual Sobolev $m$-norm. That is, $||s||_{W^m({\widetilde U}_i)}= \sum_{k=0}^m
|| \nabla_i^k s||_{L^2}$. For orbifold sections $s$ of $E$, we define $||s||_{W^m(E)}= \sum_ i 
|| \rho_i\, s||_{W^m(E{|_{U_i}})}$,
where $\{U_i\}$ is a covering of $X$ by orbifold charts, and $\{\rho_i\}$ a subordinated (orbifold) partition of unity.
The space $W^m(E)$ is the completion with respect to the $W^m$-norm of the space of (orbifold smooth) sections.
In particular, $W^0(E)\,=\,L^2(E)$.
The Sobolev embedding theorem and Rellich's lemma hold for orbifolds 
(the proof in \cite[Chapter IV.1]{Wells} can be extended to orbifolds verbatim). 

A {\em differential operator} $L\in \Diff_k(E,F)$ {\em of order} $k$ between  complex vector bundles $E$
and $F$ is a linear operator which is on an orbifold chart $(U_i,\widetilde U_i,\Gamma_i,\varphi_i)$  of the form 
 \begin{equation}\label{eqn:L=}
 L\,=\,\sum_{|\sigma|\leq k} a_\sigma(x) \frac{D^{|\sigma|}}{D^\sigma x}\, ,
 \end{equation}
where $a_\sigma(x)\in \Hom(E,F)$ is defined on each $\widetilde U_i$ and it is
$\Gamma_i$-equivariant. The {\em symbol} of $L$ is defined as 
 $$
 \sigma_k(L)(x,\xi)\,=\, \sum_{|\sigma|= k} a_\sigma(x) \xi^{\sigma}\, ,
 $$
for $x\,\in \,\widetilde U_i$,  $~\xi \,\in  \, \RR^n$. 
It is easily seen that this defines a symbol $\sigma_k(L)(x,\xi)$, for
$x\in \widetilde U_i$ and $\xi \in T_x^* \widetilde U_i$, which is $\Gamma_i$-equivariant, that
is, an orbifold section of the orbifold bundle $\Hom(E,F) \otimes (T^*X)^{\ox k}$.
We say that $L$ is an {\em elliptic operator} if the symbol of $L$ is
an isomorphism for any $\xi\,\neq\, 0$.

The {\em adjoint} $L^*$ {\em of a differential operator} $L\,\in\, \Diff_k(E,F)$ is the
operator defined by:
 \begin{equation}\label{eqn:L*=}
 \la L(s)\, ,t \ra \,=\, \la s\, , L^*(t) \ra\, ,
 \end{equation} 
for any orbifold sections $s,t$ of $E,F$, respectively. 
It turns out that $L^*\in \Diff_k(F,E)$. For checking this, we go to an orbifold chart
$(U_i,\widetilde U_i,\Gamma_i,\varphi_i)$. Then $L$ is written as (\ref{eqn:L=}). Then the
equality (\ref{eqn:L*=}), for compactly supported $\Gamma_i$-equivariant sections on $\widetilde{U}_i$, shows
that $L^*$ has the form (\ref{eqn:L=}) for suitable coefficients $a_\sigma(x)\in \Hom(F,E)$, also $\Gamma_i$-equivariant.
An operator $L\in \Diff_k(E)\,:=\, \Diff_k(E,E)$ is called {\em self-adjoint} if $L^*\,=\,L$.

\begin{theorem}
Let $L\in \Diff_k(E)$ be self-adjoint and elliptic. Let 
$$\SH_L(E)\,=\,\{ v\in \SC^\infty(E) \,\mid\, L(v)=0\}\, .$$ 
Then there exist linear mappings $H_L, \,G_L\colon\SC^\infty(E) \,\too\, \SC^\infty(E)$
such that
\begin{enumerate}
\item $H_L(\SC^\infty(E))\,=\,\SH_L$ and $\dim \SH_L(E)\,<\,\infty$,
\item $L\circ G_L+ H_L=G_L \circ L + H_L \,=\, \Id$,
\item $H_L,G_L$ extend to bounded operators in $L^2(E)$, and
\item $\SC^\infty(E)\,=\,\SH_L(E) \oplus G_L\circ L(\SC^\infty(E))\,=\,\SH_L(E) \oplus
L\circ G_L(\SC^\infty(E))$, with the decomposition being orthogonal with respect to the $L^2$-metric.
\end{enumerate}
\end{theorem}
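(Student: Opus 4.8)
The plan is to transport the classical proof of the Hodge decomposition for self-adjoint elliptic operators on compact manifolds, as in \cite[Chapter~IV]{Wells}, to the orbifold setting, verifying at each step that all the constructions are $\Gamma_i$-equivariant in orbifold charts and hence descend to $X$. The structural fact that makes this work is that, for a chart $(U_i,\widetilde U_i,\Gamma_i,\varphi_i)$ with local model $\widetilde U_i\times\CC^m$ of $E$, the orbifold sections of $E$ over $U_i$ are exactly the $\Gamma_i$-equivariant sections over $\widetilde U_i$ (Definition \ref{def:orbisection}), and likewise $W^m(E{|_{U_i}})$ is the closed subspace of $\Gamma_i$-equivariant elements of the usual Sobolev space $W^m(\widetilde U_i,\CC^m)$, with inner product rescaled by $1/|\Gamma_i|$. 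Since $\Gamma_i$ is finite, averaging over $\Gamma_i$ is a bounded orthogonal projection onto this subspace; hence every Hilbert-space operation performed on the manifold level $\widetilde U_i$ restricts to the orbifold level, and the $\Gamma$-equivariance of the transition data guarantees that the local pieces glue.

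First I would establish the fundamental elliptic estimate (a G\aa rding-type inequality): if $L\in\Diff_k(E)$ is elliptic then, for every $m$, there is $C>0$ with
$$\|s\|_{W^{m+k}(E)}\,\le\, C\big(\|L(s)\|_{W^m(E)}+\|s\|_{W^m(E)}\big)$$
for all orbifold sections $s$ of $E$. On each chart the lifted operator is an ordinary elliptic operator on $\widetilde U_i$, so the standard interior estimate applies to it; restricting to $\Gamma_i$-equivariant sections and patching with a subordinated orbifold partition of unity gives the global estimate exactly as in the manifold case. Combined with the Sobolev embedding theorem and Rellich's lemma — which the paper has already asserted to hold verbatim for orbifolds — this yields: (a) elliptic regularity, namely every $L^2$ solution of $L(v)=0$ is a smooth orbifold section; and (b) $\SH_L(E)$ is finite-dimensional, since the estimate with $L(s)=0$ bounds the $W^k$-norm on $\SH_L(E)$ by the $L^2$-norm, so the $L^2$-unit ball of $\SH_L(E)$ is precompact in $L^2$ by Rellich.

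Next I would set up the $L^2$-orthogonal decomposition. Let $H_L$ be the $L^2$-orthogonal projection onto the finite-dimensional subspace $\SH_L(E)\subset\SC^\infty(E)$; it is bounded and preserves both smoothness and chartwise $\Gamma_i$-equivariance because $\SH_L$ consists of smooth equivariant sections. As $L$ is self-adjoint, $L^2(E)=\SH_L(E)\oplus\overline{\im L}$; the elliptic estimate applied on the $L^2$-orthogonal complement $\SH_L(E)^\perp$ shows that $L$ has closed range there and that $L\colon\SH_L(E)^\perp\cap W^{m+k}\to\SH_L(E)^\perp\cap W^m$ is a topological isomorphism for every $m$. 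Define $G_L$ to be the inverse of this isomorphism on $(\id-H_L)(L^2(E))$, extended by $0$ on $\SH_L(E)$. Elliptic regularity shows $G_L(\SC^\infty(E))\subset\SC^\infty(E)$, and since $L$, $H_L$ and the orthogonal projections all commute with the chart $\Gamma_i$-actions, so does $G_L$; hence $G_L$ is a genuine operator between orbifold sections. By construction $H_L,G_L$ extend to bounded operators on $L^2(E)$, one has $L\circ G_L+H_L=G_L\circ L+H_L=\id$, and
$$\SC^\infty(E)=\SH_L(E)\oplus L\circ G_L(\SC^\infty(E))=\SH_L(E)\oplus G_L\circ L(\SC^\infty(E))$$
with the summands $L^2$-orthogonal, which are precisely conclusions (1)--(4).

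The only step requiring genuine checking, rather than bookkeeping, is the fundamental elliptic estimate in the orbifold setting: one must verify that the chartwise interior estimates for the $\Gamma_i$-equivariant lifts of $L$ patch, via an orbifold partition of unity, into a single global estimate, with no loss of control coming from the singular strata. This is where the compatibility of $L$ (and of the Hermitian connection $\nabla$ used to define the Sobolev norms) with the changes of charts, the $\Gamma_i$-equivariance of the symbol, and the normalization $\|s\|_{W^m(E{|_{U_i}})}=\frac{1}{|\Gamma_i|}\|s\|_{W^m(\widetilde U_i)}$ all get used. Once this estimate is in hand, the remainder of the argument is the standard functional-analytic package of \cite[Chapter~IV]{Wells}, transported verbatim.
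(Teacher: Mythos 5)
Your proposal is correct in outline, but it takes a genuinely different route from the paper's. The paper follows the pseudodifferential-operator machinery of Wells: it observes that a pseudodifferential operator with $\Gamma$-invariant total symbol preserves $\Gamma$-equivariant sections (and the complementary summand $D=\{s \mid \sum_{g\in\Gamma}g^*s=0\}$), constructs from the ellipticity a parametrix $\widetilde L$ with $L\circ\widetilde L-\Id$ and $\widetilde L\circ L-\Id$ of order $-1$, deduces elliptic regularity and $\dim\SH_L(E)<\infty$ from this together with Rellich's lemma, and only then defines $H_L$ as the projection onto $\SH_L(E)$ and $G_L$ as the inverse of $L$ on $\SH_L(E)^\perp$; a by-product is that $G_L$ is itself a pseudodifferential operator of negative order, which gives its boundedness and smoothing properties for free. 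You instead bypass the symbol calculus and base everything on a global G\aa rding-type a priori estimate obtained by patching chartwise interior estimates for the equivariant lifts, followed by the closed-range argument on $\SH_L(E)^\perp$. Both routes are classical and both hinge on the same orbifold-specific observation (orbifold sections are exactly the chartwise $\Gamma_i$-equivariant ones, and every construction preserves equivariance), so your reduction is sound; your version has the advantage of requiring only one analytic input to be checked on orbifolds, while the paper's yields more structural information about $G_L$. One caveat: the a priori estimate for smooth sections plus Rellich does not by itself give elliptic regularity of $L^2$ solutions of $Lv=0$ --- you still need the standard mollifier or difference-quotient bootstrap to upgrade weak solutions, a step you elide; it is routine and transports to charts, but it should be named, since in the paper's approach regularity falls out directly from the parametrix identity.
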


\begin{proof}
 The theory in Chapter VI.3 of \cite{Wells} works for orbifolds. A pseudo-differential operator
 is a linear operator $L$ which is locally of the form
$$
u(x)\, \longmapsto\, L(p)u(x)\,=\,\int p(x,\xi) {\widehat u} (\xi) e^{i\la x, \xi\ra} d\xi
$$
for compactly supported $u(x)$, where $p(x,\xi)$ is
a $\Gamma$-invariant function on $T^*\widetilde U\,= \,{\widetilde U}\x \RR^n$
such that the growth conditions in Definition 3.1 of \cite[Chapter VI]{Wells} hold. Note
that $L(p)$ takes $\Gamma$-equivariant sections 
to $\Gamma$-equivariant sections. If we decompose $\SC^\infty(\widetilde U)\,=\,
\SC^\infty(\widetilde U)^\Gamma\oplus D$, 
where $D\,=\,\{ s \,\mid\, \sum_{g\in \Gamma} g^* s\,=\,0\}$, then $L(p)$ maps $D$ to $D$.
 
A pseudo-differential operator $$L\colon\SC^\infty(E)\,\too\,\SC^\infty(E)$$ is of order
$k$ if it extends continuously to
$L\colon W^m(E) \,\too\, W^{m+k}(E)$ for every $m$. Note that locally, $L$ maps
$\Gamma$-equivariant sections
of $W^m(\widetilde U)$ to $\Gamma$-equivariant sections of
$W^{m+k}(\widetilde U)$. In particular, a differential operator of order $m$ is a
pseudo-differential operator of order $m$.
 
First, using the ellipticity of $L$, one constructs a pseudo-differential operator 
$\widetilde L$, such that $L\circ \widetilde L - \Id$ and $\widetilde L\circ L - \Id$ are of order 
$-1$. With this, one can check the regularity of the solutions of the
equation $L v\,=\,0$,
that is $$\SH_L(E)_m\,=\,\{ v \,\in\, W^m(E)\,\mid\, Lv \,=\,0\}
\,\subset\, \SC^\infty(E)\, ,$$ so that 
$\SH_L(E)=\SH_L(E)_m$ for all $m$. Using Rellich's lemma, this proves that
$\SH_L(E)$ is of finite dimension. Now $H_L$ is defined as projection onto $\SH_L(E)$, and $G_L$ is
defined as the inverse
of $L$ on the orthogonal complement to $\SH_L(E)$ and zero on $\SH_L(E)$. With this,
it turns out that $G_L$ is an operator of negative order. The rest of the assertions are now
straightforward.
\end{proof}

Let $E_0,E_1,\ldots, E_N$ be a collection of complex orbifold vector bundles over $X$.
A sequence of differential operators 
 $$
 \SC^\infty(E_0)\,\stackrel{L_0}{\too}\,\SC^\infty(E_1)\,\stackrel{L_1}{\too}\, 
 \SC^\infty(E_2) \,\stackrel{L_2}{\too}\, \ldots \,\stackrel{L_{N-1}}{\too}\, \SC^\infty(E_N)
 $$
 is an {\em elliptic complex} if $L_i\circ L_{i-1}\,=\,0$, $i\,=\,1, \cdots, N-1$, and the
sequence of symbols 
 $$
 0\,\too\, (E_0)_x\,\stackrel{\sigma(L_0)}{\too} \,(E_1)_x \,
\stackrel{\sigma(L_1)}{\too} \,
 (E_2)_x \,\stackrel{\sigma(L_2)}{\too}\, \ldots \,\stackrel{\sigma(L_{N-1})}{\too}\, (E_N)_x \,\too 0
 $$
is exact for all $x\,\in\, X$, $\xi\,\neq\, 0$.
The cohomology groups of the complex are defined to be
 $$
 H^q(E)\,:=\,\frac{\ker L_q}{\im L_{q-1}}\, .
 $$
Writing $E\,=\,\bigoplus_{i=1}^N E_i$, $L\,=\,\sum_{i=1}^{N-1} L_i$, and
$$\Delta\,=\,L^* \, L + L \, L^*$$ with
respect to some fixed Hermitian metric on every $E_i$, $0\, \leq\, i\, \leq\, N$, we
have an elliptic
operator $\Delta\colon\SC^\infty(E)\,\too\,\SC^\infty(E)$. Note that
$\Delta\colon\SC^\infty(E_i)\,\too\,\SC^\infty(E_i)$, for all $i\,=\,0,1,\ldots, N$. 
We denote
 $$
 \SH^j(E)\,=\,\ker (\Delta|_{E_j})\, .
 $$
The following is an analogue of Theorem 5.2 in \cite[Chapter V]{Wells}.

\begin{theorem} \label{thm:2}
 Let $(\SC^\infty(E),L)$ be an elliptic complex equipped with an inner product. Then
the following statements hold:
 \begin{enumerate}
 \item There is an orthogonal decomposition 
$$
\SC^\infty(E)\,=\,\SH(E)\oplus LL^* G(\SC^\infty(E)) \oplus L^*LG (\SC^\infty(E))\, .
$$
\item 
 $\Id=H+\Delta G=H+G\Delta$, $HG=GH=H\Delta=\Delta H=0$, $L\Delta=\Delta L$, $L^*\Delta=\Delta L^*$,
 $LG=GL$, $L^*G=G L^*$, $LH=HL=L^*H=HL^*=0$.
 \item $\dim \SH^j(E)<\infty$, and there is a canonical isomorphism $\SH^j(E) \cong H^j(E)$.
\item $\Delta v\,=\,0 \iff Lv\,=\,L^*v\,=\,0$ for all $v\,\in\, \SC^\infty(E)$.
\end{enumerate}
\end{theorem}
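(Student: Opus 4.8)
The plan is to mimic, essentially verbatim, the classical Hodge-theoretic argument for elliptic complexes on compact manifolds (Theorem 5.2 in Chapter V of \cite{Wells}), using the single-operator result just proved (Theorem for self-adjoint elliptic $L$) as the analytic engine. First I would observe that $\Delta = L^*L + LL^*$ is a self-adjoint differential operator on $E = \bigoplus E_i$ which preserves each summand $\SC^\infty(E_j)$, because $L$ raises degree by one and $L^*$ lowers it. Self-adjointness is immediate from \eqref{eqn:L*=}. Ellipticity of $\Delta$ follows from the exactness of the symbol sequence of the elliptic complex: on each fibre the symbol of $\Delta$ is $\sigma(L)^*\sigma(L) + \sigma(L)\sigma(L)^*$, which is an isomorphism at every $\xi \neq 0$ by the standard linear-algebra fact that for a complex of vector spaces with the property that $\im$ of one map equals $\ker$ of the next, the associated Laplacian is invertible. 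Hence the previous theorem applies to $\Delta$, yielding $H = H_\Delta$, $G = G_\Delta$ with $\dim \SH_\Delta(E) < \infty$, the identities $\Delta G + H = G\Delta + H = \Id$, and the orthogonal decomposition $\SC^\infty(E) = \SH_\Delta(E) \oplus \Delta G(\SC^\infty(E))$.

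Next I would prove item (4): $\Delta v = 0 \iff Lv = L^*v = 0$. One direction is trivial. For the other, if $\Delta v = 0$ then $0 = \la \Delta v, v\ra = \la L^*Lv, v\ra + \la LL^*v, v\ra = \|Lv\|^2 + \|L^*v\|^2$, using the $L^2$ inner product on orbifold sections and the adjointness relation \eqref{eqn:L*=}; this is where it matters that the orbifold integration used to define the $L^2$-pairing behaves well, but that is already built into the definition of $L^*$. Consequently $\SH_\Delta(E) = \ker L \cap \ker L^*$, and restricting to degree $j$, $\SH^j(E) = \ker(L|_{E_j}) \cap \ker(L^*|_{E_{j-1}}\text{-adjoint side})$. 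Then I would establish the algebraic identities in item (2): $G$ and $H$ commute with $\Delta$ because $G$ is, by construction, $\Delta^{-1}$ on the orthogonal complement of $\SH_\Delta$ and $0$ on $\SH_\Delta$, while $H$ is the orthogonal projection onto $\SH_\Delta$; $L$ commutes with $\Delta$ by a direct computation ($L\Delta = LL^*L = \Delta L$), likewise $L^*$; from these $LG = GL$, $L^*G = GL^*$ follow by the same projection argument, and $LH = HL = 0$ since $H$ projects onto $\ker L \cap \ker L^*$, which $L$ annihilates and which is $L$-invariant.

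For item (1), the finer orthogonal decomposition, I would expand $\SC^\infty(E) = \SH_\Delta(E) \oplus \Delta G(\SC^\infty(E))$ and rewrite $\Delta G = (L^*L + LL^*)G = L^*LG + LL^*G = L^*(LG) + L(L^*G) = L^*G L + L G L^*$ on the range; more carefully, $\Delta G(\SC^\infty(E)) = L^*L G(\SC^\infty(E)) + LL^* G(\SC^\infty(E))$, and the two pieces $L^*LG(\SC^\infty(E))$ and $LL^*G(\SC^\infty(E))$ are mutually orthogonal since $\la L^*L G u, LL^* G v\ra = \la LL^*LGu, L^*Gv\ra$ and one checks the cross terms vanish using $L \circ L = 0$, $L^* \circ L^* = 0$ and the commutation relations; they are also each orthogonal to $\SH_\Delta(E)$ because elements of $\SH_\Delta$ are killed by both $L$ and $L^*$. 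Finally item (3): $\dim \SH^j(E) < \infty$ is inherited from $\dim \SH_\Delta(E) < \infty$, and the canonical isomorphism $\SH^j(E) \cong H^j(E)$ comes from the decomposition $\SC^\infty(E_j) = \SH^j(E) \oplus L(\ldots) \oplus L^*(\ldots)$: given a $\Delta$-harmonic-free closed section (i.e. $Lv = 0$), write $v = Hv + \Delta G v = Hv + L^*LGv + LL^*Gv$; applying $L$ and using $Lv = 0$ forces $LL^*LGv = 0$, hence $\|L^*LGv\|^2 = \la LL^*LGv, Gv\ra = 0$, so $v = Hv + L(L^*Gv)$, showing every cohomology class has a unique harmonic representative. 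I expect the main obstacle to be purely bookkeeping: making sure every use of the $L^2$ inner product, of integration by parts, and of the adjoint is legitimate on orbifold sections — but all of this has been arranged in the construction of $L^*$ and of the Sobolev norms via orbifold charts and partitions of unity, so the argument is the classical one transported line by line, with the orbifold analytic facts (elliptic regularity, Rellich, the single-operator Hodge theorem) supplied by the previously proved theorem.
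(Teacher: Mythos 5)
Your proposal is correct and follows exactly the route the paper intends: the paper states Theorem \ref{thm:2} without proof as "an analogue of Theorem 5.2 in \cite[Chapter V]{Wells}", relying on the single self-adjoint elliptic operator theorem proved just before it, and your argument is precisely that classical derivation transported to orbifold sections. (Only a typographical slip: in item (3) the pairing should read $\la LL^*LGv,\, LGv\ra$ rather than $\la LL^*LGv,\, Gv\ra$.)
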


The complex
 $$
\Omega^0_{orb}(X) \stackrel{d}{\too}\Omega^1_{orb}(X)
\stackrel{d}{\too}\Omega^2_{orb}(X) \stackrel{d}{\too} \cdots
\stackrel{d}{\too}\Omega^n_{orb}(X) 
$$
is elliptic. Hence Theorem \ref{thm:2} implies that
\begin{equation}\label{eqn:aa}
H^k(X) \,\cong\, \SH^k(X)\,=\,
\ker (\Delta\colon\Omega^{k}_{orb}(X) \too \Omega^{k}_{orb}(X))\, ,
\end{equation}
where $\Delta\,=\,d d^*+d^* d$.

\section{K\"ahler orbifolds} \label{sec:kahler-orb}

Let $X$ be a complex orbifold, of complex dimension $n$, with an atlas $\{(U_i,\, {\widetilde U}_i, \,{\Gamma}_i, \,{\varphi}_i)\}$. 
As for complex manifolds, we can consider 
{\em orbifold complex forms} on $X$. 
An {\em orbifold complex $k$-form} $\alpha$ on $X$ is given by a complex $k$-form $\alpha_i$ on the open set ${\widetilde U}_i$, for each orbifold chart  
 $(U_i, {\widetilde U}_i, \Gamma_i, \varphi_i)$, and such that every 
 $\alpha_i$ is $\Gamma_i$-invariant and preserved by all
the change of charts. We say that $\alpha$ is {\em bigraded} of type $(p,q)$, with $k=p+q$, if each
$\alpha_i$ is a $(p,q)$-form on ${\widetilde U}_i$.
Denote by  $\Omega^{p,q}_{orb}(X)$ the space of orbifold $(p,q)$-forms on $X$.
Then, we have the type
decomposition of the exterior derivative $d\,=\,\bd+\overline\bd$, where
$$\bd\colon\Omega^{p,q}_{orb}(X) \,\too\, \Omega^{p+1,q}_{orb}(X)\ ~ \text{ and }~ \
\overline\bd\colon\Omega^{p,q}_{orb}(X) \,\too\, \Omega^{p,q+1}_{orb}(X)\, .$$ The (orbifold) Dolbeault
cohomology of $X$ is defined to be
$$
H^{p,q}(X)\,:=\,
\frac{\text{ker}(\overline\bd\colon\Omega^{p,q}_{orb}(X) \too \Omega^{p,
q+1}_{orb}(X))}{\overline\bd (\Omega^{p,q-1}_{orb}(X))}\, .
$$

Fix an orbifold Hermitian metric on $X$. For
any $p\, \geq\, 0$, the complex
 $$
0\too \Omega^{p,0}_{orb}(X) \stackrel{\overline\bd}{\too} \Omega^{p,1}_{orb}(X) \stackrel{\overline\bd}{\too}\Omega^{p,2}_{orb}(X) 
\stackrel{\overline\bd}{\too} \ldots \stackrel{\overline\bd}{\too}\Omega^{p,n}_{orb}(X) \too 0
$$
is elliptic, where $n$ is the complex dimension of $X$. Hence Theorem \ref{thm:2} implies that
 $$
 H^{p,q}(X)\,\cong\, \SH^{p,q}(X)\,=\, \ker (\Delta_{\overline\bd}\colon\Omega^{p,q}_{orb}(X)
\too \Omega^{p,q}_{orb}(X) )\, ,
$$
where $\Delta_{\overline\bd}\,=\,\overline\bd \, {\overline\bd}^*+{\overline\bd}^*
\overline\bd$.

Let $(X,\, J,\, h)$ be a complex Hermitian orbifold, with orbifold complex structure $J$ and Hermitian metric $h$. Thus,
we have an orbifold Riemannian metric $g=\mathrm{Re}\,h$ and an orbifold 2-form $\omega \,\in\, \Omega_{orb}^{1,1}(X)$ 
defined by 
$$\omega=\mathrm{Im}\,h\,,$$
Then, $\omega^n\not=0$, where $n$ is the complex dimension of $X$.

\begin{definition}
A complex Hermitian orbifold $(X, h)$ is called {\em K\"ahler orbifold} if the associated fundamental
form $\omega$ is closed, that is $d\omega\,=\,0$. 
\end{definition}

\begin{proposition} \label{prop:3}
For a compact K\"ahler orbifold, $$\Delta\,=\,2\Delta_{\overline\bd}\, .$$ Therefore $\SH^k(X)
\,=\,\bigoplus_{p+q=k} \SH^{p,q}(X)$.
\end{proposition}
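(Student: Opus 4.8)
The plan is to reduce the statement to its well-known counterpart on K\"ahler manifolds and transport it through the orbifold charts, using that every construction involved is local and $\Gamma_i$-invariant. First I would recall the classical K\"ahler identities: on any K\"ahler manifold the operator $L=\omega\wedge(\cdot)$ and its adjoint $\Lambda$ satisfy $[\Lambda,\bd]=i\,{\overline\bd}^*$ and $[\Lambda,\overline\bd]=-i\,\bd^*$, from which one derives $\Delta=2\Delta_{\overline\bd}=2\Delta_{\bd}$ by a purely algebraic manipulation (see \cite[Chapter V]{Wells}). All of these identities are pointwise statements about the metric, the complex structure, and the associated fundamental form.

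Next I would observe that on a K\"ahler orbifold $X$, in each orbifold chart $(U_i,\widetilde U_i,\Gamma_i,\varphi_i)$, the data $(\widetilde U_i, J_i, h_i)$ is an honest K\"ahler manifold, since $\omega_i=\mathrm{Im}\,h_i$ is closed ($d\omega=0$ means $d\omega_i=0$ for every $i$) and of type $(1,1)$. The operators $d$, $\bd$, $\overline\bd$, their formal adjoints $d^*,\bd^*,{\overline\bd}^*$ (computed with respect to the $L^2$ inner product coming from $h_i$), and hence $\Delta$ and $\Delta_{\overline\bd}$, are all differential operators on $X$ in the sense of Section \ref{sec:ellipticoperators}: in a chart they are given by the usual local formulas, which are $\Gamma_i$-equivariant because $h_i$, $J_i$ and the gluing maps are $\Gamma_i$-invariant and holomorphic isometries. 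Therefore the classical identity $\Delta_i=2(\Delta_{\overline\bd})_i$, valid on each K\"ahler manifold $\widetilde U_i$, immediately says that the two orbifold differential operators $\Delta$ and $2\Delta_{\overline\bd}$ agree chart by chart, hence agree on $\Omega_{orb}^*(X)$. This gives the first assertion $\Delta=2\Delta_{\overline\bd}$.

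For the second assertion, I would argue as follows. Since $\Delta$ preserves the bidegree — this is exactly the content of $\Delta=2\Delta_{\overline\bd}$, as $\Delta_{\overline\bd}$ maps $\Omega^{p,q}_{orb}(X)$ to itself — the space of $\Delta$-harmonic $k$-forms $\SH^k(X)$ splits according to type: a $k$-form $\alpha=\sum_{p+q=k}\alpha^{p,q}$ satisfies $\Delta\alpha=0$ if and only if $\Delta\alpha^{p,q}=0$ for each $(p,q)$, because the components $\Delta\alpha^{p,q}$ live in distinct summands $\Omega^{p,q}_{orb}(X)$. Hence $\SH^k(X)=\bigoplus_{p+q=k}\SH^{p,q}(X)$, where $\SH^{p,q}(X)=\ker(\Delta|_{\Omega^{p,q}_{orb}(X)})=\ker(\Delta_{\overline\bd}|_{\Omega^{p,q}_{orb}(X)})$, the last equality again by $\Delta=2\Delta_{\overline\bd}$.

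The only genuine subtlety — and the step I would treat most carefully — is making sure the Hodge-theoretic machinery of Section \ref{sec:ellipticoperators} legitimately applies: one needs $\Delta$ to be a well-defined self-adjoint elliptic orbifold differential operator so that $\SH^k(X)\cong H^k(X)$ via Theorem \ref{thm:2}, and likewise $\Delta_{\overline\bd}$ for the elliptic complex $(\Omega^{p,\bullet}_{orb}(X),\overline\bd)$. This is already established in the excerpt (ellipticity of the de Rham and Dolbeault complexes on $X$, equations \eqref{eqn:aa} and the analogous Dolbeault statement), so the remaining work is purely the bookkeeping that the local K\"ahler identities are compatible with the $\Gamma_i$-actions and the changes of charts, which is automatic because the gluing maps are holomorphic isometries. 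I do not expect any real obstacle beyond this verification; the heart of the matter is entirely the classical computation on the K\"ahler manifolds $\widetilde U_i$.
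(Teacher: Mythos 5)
Your proof is correct and follows essentially the same route as the paper: both reduce the identity $\Delta = 2\Delta_{\overline\bd}$ to the classical pointwise K\"ahler identity on a smooth model --- the paper via the dense open subset of regular points of $X$, you via the uniformizing charts $\widetilde U_i$ --- and then deduce the type decomposition of harmonic forms from the fact that $\Delta_{\overline\bd}$ preserves bidegree. Your chart-by-chart formulation is, if anything, slightly more explicit about why the local identity is compatible with the $\Gamma_i$-actions and the changes of charts, but it is the same locality argument.
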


\begin{proof}
This is true on the dense open subset of non-singular points of $X$ by Theorem 4.7 of \cite[Chapter V]{Wells}.
 So it holds everywhere on $X$. 
\end{proof}

\begin{corollary}
For a compact K\"ahler orbifold, $b_k(X)$ is even for $k$ odd.
\end{corollary}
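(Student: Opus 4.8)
The plan is to deduce the evenness of the odd Betti numbers from the Hodge decomposition in Proposition \ref{prop:3} together with the conjugation symmetry of the orbifold Dolbeault cohomology groups, exactly as in the manifold case. First I would recall that by Theorem \ref{thm:2} and the isomorphism \eqref{eqn:aa} we have $H^k(X,\,\CC)\,\cong\,\SH^k(X)$, where the harmonic forms are computed with respect to the Laplacian $\Delta=dd^*+d^*d$ of the orbifold K\"ahler metric; and by Proposition \ref{prop:3} this space splits as $\SH^k(X)=\bigoplus_{p+q=k}\SH^{p,q}(X)$, with $\SH^{p,q}(X)\cong H^{p,q}(X)$. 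Hence, writing $h^{p,q}=\dim_{\CC}H^{p,q}(X)$, we obtain $b_k(X)=\sum_{p+q=k}h^{p,q}$.

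Next I would establish the symmetry $h^{p,q}=h^{q,p}$. Since the metric $g=\mathrm{Re}\,h$ is real, complex conjugation on orbifold complex forms sends $\Omega^{p,q}_{orb}(X)$ to $\Omega^{q,p}_{orb}(X)$ and commutes with $d$, hence with $d^*$ and with $\Delta$; therefore conjugation restricts to a ($\RR$-linear, $\CC$-antilinear) isomorphism $\SH^{p,q}(X)\xrightarrow{\ \sim\ }\SH^{q,p}(X)$. This gives $h^{p,q}=h^{q,p}$ just as over a K\"ahler manifold. The one point that needs a word of justification in the orbifold setting is that conjugation is well defined on orbifold forms: this is immediate from the definition in Section \ref{sec:kahler-orb}, because in each chart $(U_i,\widetilde U_i,\Gamma_i,\varphi_i)$ the group $\Gamma_i\subset\GL(n,\CC)$ acts by biholomorphisms, so $\Gamma_i$-invariance and compatibility with the holomorphic changes of charts are preserved under conjugation of the local representatives $\alpha_i$.

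Finally, for $k$ odd we pair up the terms in $b_k(X)=\sum_{p+q=k}h^{p,q}$: the involution $(p,q)\mapsto(q,p)$ on the index set $\{(p,q):p+q=k,\ p,q\ge 0\}$ has no fixed point when $k$ is odd (that would force $p=q$, hence $k=2p$ even), so the index set is a disjoint union of two-element orbits $\{(p,q),(q,p)\}$, and on each such orbit the contribution $h^{p,q}+h^{q,p}=2h^{p,q}$ is even. Summing over the orbits shows $b_k(X)$ is even.

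I do not anticipate a serious obstacle here: all the analytic input (ellipticity, Hodge theory, and the $\Delta=2\Delta_{\overline\bd}$ identity) has already been carried over to orbifolds in Theorem \ref{thm:2} and Proposition \ref{prop:3}, and the argument above is then purely formal. The only mild subtlety, worth a sentence, is checking that complex conjugation respects the orbifold structure on forms, which follows directly from the definitions; everything else is the standard K\"ahler argument.
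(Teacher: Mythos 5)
Your proof is correct and follows essentially the same route as the paper: both deduce the result from the decomposition $\SH^k(X)=\bigoplus_{p+q=k}\SH^{p,q}(X)$ of Proposition \ref{prop:3} together with the conjugation isomorphism $\SH^{p,q}(X)\cong\SH^{q,p}(X)$ (the paper notes conjugation commutes with $\Delta$ since it is a real operator), and then the fixed-point-free pairing $(p,q)\mapsto(q,p)$ for $k$ odd. Your extra remark that conjugation respects the orbifold structure on forms is a reasonable point of care, but otherwise the arguments coincide.
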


\begin{proof}
Clearly, conjugation gives a map $\Omega^{p,q}_{orb}(X)\,\longrightarrow\, \Omega^{q,p}_{orb}(X)$ that
commutes with $\Delta$ (as this is a real operator). Therefore,  the induced map 
$\SH^{p,q}(X)\,\longrightarrow\, \SH^{q,p}(X)$ is an isomorphism. 
In particular, $h^{p,q}(X)\,=\,h^{q,p}(X)$, where $h^{p,q}(X)=\dim
H^{p,q}(X)$. Thus, $b_k(X)=\sum_{k=p+q} h^{p,q}(X)$ is even for $k$ odd.
\end{proof}

\begin{lemma} \label{lem:dd-lemma}
\mbox{}
\begin{enumerate}
\item Take $\alpha\,\in\, \Omega^{p,q}_{orb}(X)$ with $\bd \alpha\,=\,0$.
If $\alpha\,=\,{\overline\bd} \beta$ for some $\beta$, then
there exists $\psi$ such that $\alpha\,=\,\bd{\overline\bd} \psi$.

\item Take $\alpha\,\in\, \Omega^{p,q}_{orb}(X)$ with ${\overline\bd} \alpha\,=\,0$.
If $\alpha\,=\,\bd \beta$ for some $\beta$, then
there exists $\psi$ such that $\alpha\,=\,\bd{\overline\bd} \psi$.
\end{enumerate}
\end{lemma}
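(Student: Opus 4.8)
The plan is to carry over, essentially verbatim, the classical proof of the $\bd\overline\bd$-lemma for compact K\"ahler manifolds from \cite[Chapter V]{Wells}, replacing Hodge theory on manifolds by the orbifold Hodge theory of Theorem \ref{thm:2} and feeding in Proposition \ref{prop:3}.

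First I would assemble the ``K\"ahler package'' on $X$. Since $d$ is a real operator, $\Delta$ is real, and conjugation intertwines $\Delta_{\overline\bd}$ with $\Delta_{\bd}$; hence Proposition \ref{prop:3} gives $\Delta=2\Delta_{\bd}=2\Delta_{\overline\bd}$, so that $\Delta_{\bd}=\Delta_{\overline\bd}$ on every $\Omega^{p,q}_{orb}(X)$. Applying Theorem \ref{thm:2} to the elliptic complexes $(\Omega^{\bullet,q}_{orb}(X),\bd)$ and $(\Omega^{p,\bullet}_{orb}(X),\overline\bd)$ yields harmonic projectors and Green operators; because the two Laplacians coincide they share a single Green operator $G$ and harmonic projector $H$, and since $\Delta$ commutes with both $\bd$ and $\overline\bd$ (by Theorem \ref{thm:2}(2) applied to each complex) so do $G$ and $H$. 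Finally, the K\"ahler identity $\overline\bd\,\bd^{*}+\bd^{*}\overline\bd=0$ holds on the dense open set of regular points of $X$ by the manifold theory in \cite[Chapter V]{Wells}, and as both sides are orbifold differential operators it then holds on all of $X$ --- this is the same density argument already used in the proof of Proposition \ref{prop:3}.

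With this in hand, the proof of (1) is the usual computation. Let $\alpha\in\Omega^{p,q}_{orb}(X)$ satisfy $\bd\alpha=0$ and $\alpha=\overline\bd\beta$. Using the Hodge decomposition of Theorem \ref{thm:2} for $\Delta_{\bd}$, write $\beta=H\beta+\bd\,\bd^{*}G\beta+\bd^{*}\bd\,G\beta$ and apply $\overline\bd$. The harmonic term contributes $\overline\bd\,(H\beta)=0$, since $H=H_{\overline\bd}$ and $\overline\bd\,H_{\overline\bd}=0$. For the last term, using $\overline\bd\,\bd=-\bd\,\overline\bd$, the K\"ahler identity $\overline\bd\,\bd^{*}=-\bd^{*}\overline\bd$, and that $G$ commutes with $\bd$ and $\overline\bd$,
$$\overline\bd\,\bd^{*}\bd\,G\beta=-\bd^{*}\overline\bd\,\bd\,G\beta=\bd^{*}\bd\,\overline\bd\,G\beta=\bd^{*}\bd\,G\alpha=\bd^{*}G\,\bd\alpha=0 .$$
The middle term becomes $\overline\bd\,\bd\,\bd^{*}G\beta=-\bd\,\overline\bd\,\bd^{*}G\beta=\bd\overline\bd(-\bd^{*}G\beta)$. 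Hence $\alpha=\bd\overline\bd\psi$ with $\psi=-\bd^{*}G\beta$, which is (1). Part (2) then follows by conjugation: if $\overline\bd\alpha=0$ and $\alpha=\bd\beta$, then $\overline\alpha\in\Omega^{q,p}_{orb}(X)$ satisfies $\bd\,\overline\alpha=0$ and $\overline\alpha=\overline\bd\,\overline\beta$, so (1) gives $\overline\alpha=\bd\overline\bd\chi$ for some $\chi$, and conjugating back $\alpha=\overline\bd\,\bd\,\overline\chi=\bd\overline\bd(-\overline\chi)$.

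The whole argument is a transcription of the manifold proof, so the only point really needing attention is that the Hodge-theoretic facts and the K\"ahler identities must be available in the orbifold category; but Theorem \ref{thm:2} supplies the former and the density argument (the regular locus is dense and every operator involved is an orbifold differential operator) supplies the latter, so I do not anticipate a genuine obstacle.
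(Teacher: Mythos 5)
Your proof is correct, but it is organized differently from the paper's. The paper applies the Hodge decomposition of Theorem \ref{thm:2} directly to $\alpha$ with respect to $\Delta_{\overline\bd}$, writes $\alpha=H\alpha+\overline\bd\,\overline\bd^*G\alpha+\overline\bd^*\overline\bd G\alpha$, kills the first and third terms exactly as you do, and then converts $\overline\bd^*\alpha$ into a $\bd$-exact term via the K\"ahler identity $\overline\bd^*=\sqrt{-1}\,[\Lambda,\bd]$ together with $\bd\alpha=0$, ending with $\psi=-\sqrt{-1}\,G\Lambda\alpha$. You instead decompose $\beta$ with respect to $\Delta_{\bd}$, apply $\overline\bd$, and use the anticommutation $\overline\bd\,\bd^*+\bd^*\overline\bd=0$ to dispose of the unwanted terms, ending with $\psi=-\bd^*G\beta$. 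The two routes are equivalent in substance --- each rests on Theorem \ref{thm:2}, the equality of Laplacians from Proposition \ref{prop:3}, and one K\"ahler identity extended to the orbifold by the density-of-regular-points argument --- but yours requires the extra (easy) observations that $\Delta_{\bd}=\Delta_{\overline\bd}$, that the two complexes share the same $G$ and $H$, and that the $\Delta_{\bd}$-harmonic projection of $\beta$ is $\overline\bd$-closed, whereas the paper's version works entirely inside the single $\overline\bd$-complex and only invokes the hypothesis on $\beta$ through $H\alpha=0$. A minor point in the paper's favor: your formula for $\psi$ depends on the choice of $\beta$, while the paper's $\psi=-\sqrt{-1}\,G\Lambda\alpha$ is canonical in $\alpha$; a minor point in yours: you make explicit the orbifold justification of the K\"ahler identity, which the paper leaves implicit. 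Your deduction of part (2) by conjugation is also fine and matches the paper's ``identical proof'' remark.
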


\begin{proof}
 Using Theorem \ref{thm:2},
 $$
 \alpha\,=\, H\alpha + \Delta_{\overline\bd} G \alpha
\,=\,H\alpha+ {\overline\bd}\, {\overline\bd}^*G \alpha+{\overline\bd}^*
{\overline\bd} G\alpha\, ,
 $$
where $G\,=\,G_{\overline\bd}$ is the Green's operator associated to $\overline\bd$.
As $\alpha\,=\,{\overline\bd} \beta$, the cohomology class represented by
$\alpha$ vanishes, so $H\alpha\,=\,0$. Then, since $G$ commutes
with $\overline\bd$, we have ${\overline\bd} G\alpha\,=\, G{\overline\bd} \alpha
\,=\,0$. Hence $\alpha\,=\, {\overline\bd} \,{\overline\bd}^*G \alpha\,=\,
{\overline\bd} G ({\overline\bd}^*\alpha)$.

Now ${\overline\bd}^*\,=\,\sqrt{-1} [\Lambda,\bd]$, where $\Lambda\,=\,L_\omega^*$ and
$L_\omega(\beta)\,=\,\omega\wedge \beta$.
So ${\overline\bd}^*\alpha\,=\,-\sqrt{-1}\bd \Lambda\alpha$, because $\bd\alpha\,=\,0$. Hence
$\alpha\,=\, {\overline\bd} G (-\sqrt{-1} \bd \Lambda\alpha)\,=\,-
\sqrt{-1} \, {\overline\bd} \bd (G\Lambda\alpha)$. Therefore, taking
$\psi\,=\,-\sqrt{-1} G\Lambda\alpha$, we conclude the proof of the first part.

The proof of the second part is identical.
\end{proof}

\begin{theorem}\label{thm:K-orb-formal}
 Let $X$ be a compact K\"ahler orbifold. Then $X$ is formal.
\end{theorem}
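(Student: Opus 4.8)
The plan is to mimic the classical Deligne--Griffiths--Morgan--Sullivan argument, using the orbifold $\bd\overline\bd$-lemma (Lemma \ref{lem:dd-lemma}) as the sole analytic input and otherwise working purely formally with differential graded algebras. First I would consider the orbifold de Rham complex $(\Omega^*_{orb}(X),\, d)$, and inside it the subalgebra $\Omega^*_c$ of $d^c$-closed forms, where $d^c\,=\,\sqrt{-1}(\overline\bd-\bd)$; note that this is genuinely a subalgebra since $d^c$ is a derivation, and it is closed under $d$ because $dd^c\,=\,-d^cd$. There are two natural DGA morphisms out of $(\Omega^*_c,\,d)$: the inclusion $\iota\,\colon\,(\Omega^*_c,\,d)\,\hookrightarrow\,(\Omega^*_{orb}(X),\,d)$, and the projection $\pi\,\colon\,(\Omega^*_c,\,d)\,\longrightarrow\,(H^*_{d^c}(\Omega^*_c),\,0)$ sending a $d^c$-closed form to its $d^c$-cohomology class, which makes sense because $d^c$ vanishes on the target and, crucially, the $\bd\overline\bd$-lemma forces $d$ to vanish on $H^*_{d^c}(\Omega^*_c)$ as well (a $d$-exact, $d^c$-closed form is $d^c$-exact).

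The heart of the argument is to show that both $\iota$ and $\pi$ are quasi-isomorphisms; then $(\Omega^*_c,\,d)$ is a common model linking the de Rham algebra of $X$ to an algebra with zero differential, which by Definition \ref{def:orbifolds-minimalmodel} and Proposition \ref{prop:criterio1}-type reasoning (precisely, by the standard fact that a DGA with a zero-differential model is formal) proves formality of $X$. To see $\iota$ is a quasi-isomorphism: surjectivity on cohomology follows because any $d$-closed form $\alpha$ can be modified within its $d$-cohomology class to a form that is also $d^c$-closed --- one replaces $\alpha$ by $\alpha - d\gamma$ using the $\bd\overline\bd$-lemma applied to the bihomogeneous components --- and injectivity follows similarly. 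The same bookkeeping, carried out one step further, shows $\pi$ induces an isomorphism $H^*(\Omega^*_c,\,d)\,\cong\,H^*_{d^c}(\Omega^*_c)$. All of this is exactly the chain of lemmas in \cite{DGMS}, Section 6, and every ingredient it uses --- Hodge decomposition, ellipticity, the Hodge identities $\overline\bd^*=\sqrt{-1}[\Lambda,\bd]$ --- has already been established for compact K\"ahler orbifolds in Section \ref{sec:ellipticoperators} and in Lemma \ref{lem:dd-lemma}.

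The step I expect to be the main obstacle is purely bookkeeping rather than conceptual: one must verify carefully that the orbifold differential forms behave exactly like manifold differential forms with respect to the bigrading, the operators $\bd,\overline\bd,\partial^*,\overline\bd^*,\Lambda$, and the Green's operator, so that the DGMS diagram of quasi-isomorphisms goes through verbatim. Since orbifold forms are locally just $\Gamma_i$-invariant forms on $\widetilde U_i\subset\CC^n$, and all the relevant operators preserve $\Gamma_i$-invariance, there is no genuine new difficulty here --- but one must be attentive that the $\bd\overline\bd$-lemma is available in both directions (which is precisely why Lemma \ref{lem:dd-lemma} is stated with parts (1) and (2)) and that the $d^c$-closed subalgebra is a well-defined sub-DGA of $\Omega^*_{orb}(X)$. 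I would therefore structure the proof as: (i) recall that it suffices to exhibit $(\Omega^*_{orb}(X),\,d)$ as connected to a DGA with zero differential by a zig-zag of quasi-isomorphisms; (ii) introduce $(\Omega^*_c,\,d)$ and the maps $\iota,\pi$; (iii) invoke Lemma \ref{lem:dd-lemma} to prove $\iota$ is a quasi-isomorphism; (iv) invoke it again to prove $\pi$ is a quasi-isomorphism; (v) conclude formality, and note that Proposition \ref{prop:orbifold-model} identifies the resulting cohomology algebra with the singular cohomology $H^*(X,\RR)$.
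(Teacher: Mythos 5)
Your proposal is correct, but it follows the other of the two classical DGMS zig-zags, not the one the paper uses. The paper's proof takes as intermediate DGA the subalgebra $(\ker \bd,\,\overline\bd)$ of $\bd$-closed complex forms, proves that the inclusion into $(\Omega^*_{orb}(X),\,d)$ and the harmonic projection $H$ onto $(\SH^*_{\overline\bd}(X),\,0)$ are both quasi-isomorphisms, and for this it leans on the Hodge-theoretic machinery of Section \ref{sec:ellipticoperators} (harmonic representatives via Proposition \ref{prop:3}, the Green's operator, the identity $\overline\bd{}^*=\sqrt{-1}[\Lambda,\bd]$) in addition to Lemma \ref{lem:dd-lemma}. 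You instead take the real sub-DGA $(\ker d^c,\,d)$ and map it to its $d^c$-cohomology with zero differential, using only the $\bd\overline\bd$-lemma (in its equivalent $dd^c$ formulations, which you correctly note require decomposing into bihomogeneous components). Both routes are sound and both appear in \cite{DGMS}. What your route buys: it works with real forms throughout, since $d^c$ is a real operator, so it yields formality of the real de Rham algebra directly, whereas the paper's route via $\ker\bd$ lives in the complexified de Rham complex and implicitly relies on the descent of formality from $\CC$ to $\RR$; your route also avoids re-invoking the elliptic machinery at the formality stage, concentrating all the analysis in Lemma \ref{lem:dd-lemma}. What the paper's route buys: the target $(\SH^*_{\overline\bd}(X),\,0)$ is manifestly the cohomology of $X$ (via Proposition \ref{prop:3} and \eqref{eqn:aa}), whereas with your target $H^*_{d^c}$ one must add the observation that the composite zig-zag identifies it with $H^*(X,\RR)$ --- which your quasi-isomorphisms do provide, so nothing is missing. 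The only point to be careful about, which you flag, is deducing the four $dd^c$-statements from the two $\bd\overline\bd$-statements of Lemma \ref{lem:dd-lemma}; this is routine bookkeeping and does not affect correctness.
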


\begin{proof}
We have to show that $(\Omega^*_{orb}(X),\,d)$ and 
$(H^*(X),0)$ are quasi-isomorphic differential graded commutative
algebras (DGA).
 
Consider the DGA $(\ker \bd,\overline\bd)$. We will show that
 $$
\imath\colon(\ker \bd,\overline\bd) \,\hookrightarrow\, (\Omega^*_{orb}(X),\,d)
 $$
is a quasi-isomorphism. To prove surjectivity, we can take 
a $(p,q)$-form $\alpha$ which is $d$-closed (see Proposition \ref{prop:3}). If
$d\alpha\,=\,0$, then $\bd \alpha\,=\,0$ and ${\overline\bd} \alpha\,=\,0$. So
$\alpha\,\in\, \ker \bd$
and $\imath^* [\alpha]\,=\,[\alpha]$. For injectivity, take $\alpha\,\in\,
\ker\bd$ such that $\imath^*[\alpha]\,=\,0$. Then ${\overline\bd} \alpha\,=\,0$ and 
$\alpha\,=\,d\beta$, for some form $\beta$. Therefore, $\alpha\,=\,
\bd\beta+{\overline\bd}\beta$. Thus we have
${\overline\bd}(\bd\beta)\,=\,0$. By Lemma \ref{lem:dd-lemma}, we have
that $\bd\beta\,=\,\bd{\overline\bd} \psi$ for some $\psi$. Hence
$\alpha\,=\,{\overline\bd}\beta+\bd{\overline\bd}\psi\,=\,{\overline\bd} (\beta-\bd \psi-{\overline\bd} \psi)$.
Note that $\bd (\beta-\bd\psi-{\overline\bd}\psi)\,=\,\bd \beta-\bd
{\overline\bd} \psi\,=\,0$, so 
$\beta-\bd\psi-{\overline\bd}\psi \,\in\, \ker \bd$.

Next we will show that the projection given by 
 $$
 H\colon(\ker \bd,\overline\bd) \,\too\, (\SH^*_{\overline\bd}(X),0)
 $$
is a quasi-isomorphism.

Let $\alpha\,\in\, \ker\bd \cap \ker \overline\bd$. Then
${\overline\bd}^* \alpha\,=\, \sqrt{-1}[\Lambda,\bd]\alpha\,=\,- \sqrt{-1} \bd
(\Lambda \alpha)$. So 
 $$
 \alpha\,=\,H\alpha+ G({\overline\bd}\, {\overline\bd}^*\alpha+
{\overline\bd}^*{\overline\bd}\alpha)
\,=\, H\alpha - \sqrt{-1} G{\overline\bd} \bd(\Lambda\alpha)\, ,
 $$
that is $\alpha\,=\,H\alpha+ \bd{\overline\bd}\psi$, for some $\psi$. Therefore, if $H\alpha\,=\,0$, then
$\alpha\,=\,{\overline\bd} (\bd \psi)$, with $\bd \psi \,\in\, \ker \bd$. This proves
injectivity.

Now suppose $\alpha\,=\,H\alpha+ \bd{\overline\bd}\psi$ and
$\beta\,=\,H\beta+\bd{\overline\bd} \phi$.
So $$\alpha\wedge \beta\,=\,H\alpha\wedge H\beta+ \bd{\overline\bd} \Phi$$ for some
$\Phi$, hence $H(\alpha\wedge \beta)\,=\,H\alpha\wedge H\beta$. This implies
that $H$ is a DGA map.

Finally, let us show surjectivity of $H$. Take $\alpha$ to be harmonic. Then ${\overline\bd} \alpha\,=\,0$
and ${\overline\bd}^* \alpha\,=\,0$. Since $\Delta\,=\,2\Delta_{\overline\bd}$, we also have
$d\alpha\,=\,0$ and $\bd \alpha\,=\,0$. So $H([\alpha])\,=\,\alpha$.
\end{proof}

The hard Lefschetz property is proved in \cite{Wang-Zaffran}, but we shall give a proof 
with the current techniques for completeness.

\begin{theorem} \label{thm:hard-Lefschetz}
Let $(X,\omega)$ be a compact K\"ahler orbifold. 
 Then the map
\begin{equation}\label{eqn:oo}
L_\omega^{n-k}\colon H^k(X) \,\too\, H^{2n-k}(X)
\end{equation}
is an isomorphism for $0\leq k \leq n$.
\end{theorem}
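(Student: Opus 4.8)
The plan is to mimic the classical proof of the hard Lefschetz theorem for compact K\"ahler manifolds, using the Hodge theory for orbifolds already established in the excerpt. The key point is that the hard Lefschetz identity is essentially a pointwise statement in linear algebra on the K\"ahler vector space, together with the fact that the relevant operators commute with the Laplacian.

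First I would recall the $\mathfrak{sl}(2)$-action on orbifold forms. On each orbifold chart $(U_i,\widetilde U_i,\Gamma_i,\varphi_i)$ the K\"ahler form $\omega$ restricts to a $\Gamma_i$-invariant K\"ahler form $\omega_i$ on $\widetilde U_i$, and the operators $L_{\omega_i}(\beta)=\omega_i\wedge\beta$, its adjoint $\Lambda_i=L_{\omega_i}^*$, and the counting operator $\mathcal{H}_i=\sum_k (k-n)\,\pi_k$ (where $\pi_k$ is projection onto $k$-forms) satisfy the standard commutation relations $[\Lambda,L]=\mathcal{H}$, $[\mathcal{H},L]=2L$, $[\mathcal{H},\Lambda]=-2\Lambda$. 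Since these relations are $\Gamma_i$-equivariant and compatible with the gluing maps, they descend to operators $L_\omega,\Lambda,\mathcal{H}$ on $\Omega^*_{orb}(X)$ satisfying the same relations. The standard $\mathfrak{sl}(2)$-representation theory then gives, purely pointwise (hence on every orbifold chart), that $L_\omega^{n-k}\colon \Lambda^k T^*_xX\to\Lambda^{2n-k}T^*_xX$ is an isomorphism of $\Gamma_x$-modules; taking $\Gamma_x$-invariants it is an isomorphism on the orbifold forms as well.

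Next I would pass from forms to cohomology via harmonic theory. By the orbifold K\"ahler identities (the same computation as in \cite[Chapter V]{Wells}, valid on the dense regular locus and hence everywhere, exactly as in the proof of Proposition \ref{prop:3}), the operators $L_\omega$ and $\Lambda$ commute with the Laplacian $\Delta$. Therefore $L_\omega$ maps harmonic forms to harmonic forms, and by \eqref{eqn:aa} the map \eqref{eqn:oo} is identified with $L_\omega^{n-k}\colon\SH^k(X)\to\SH^{2n-k}(X)$. Injectivity of $L_\omega^{n-k}$ on $\SH^k(X)$ is immediate from its injectivity on forms. For surjectivity one argues by downward induction on $k$ (equivalently by a dimension count): since $b_k(X)=b_{2n-k}(X)$ by Poincar\'e duality for orbifolds \cite{S1}, an injective linear map $\SH^k(X)\to\SH^{2n-k}(X)$ between spaces of equal dimension is automatically an isomorphism. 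Alternatively, use the Lefschetz decomposition: the pointwise $\mathfrak{sl}(2)$-theory gives $\Lambda^{2n-k}T^*_xX=\bigoplus_{j\geq 0} L_\omega^{n-k+j}P^{k-2j}_x$ with $P$ the primitive forms, and harmonicity is compatible with this decomposition because $L_\omega,\Lambda,\mathcal{H}$ all commute with $\Delta$; hence $\SH^{2n-k}(X)=L_\omega^{n-k}\SH^k(X)\oplus(\text{lower primitive pieces already hit})$, giving surjectivity directly.

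The main obstacle is not conceptual but bookkeeping: one must check that the $\mathfrak{sl}(2)$-operators and the K\"ahler identities genuinely descend to the orbifold de Rham complex and that they commute with $\Delta$ globally. Both are handled as in Proposition \ref{prop:3}: the identities are local, $\Gamma_i$-equivariant, and hold on the (dense, open, smooth) regular locus $X-S$, so by continuity they hold on all of $\Omega^*_{orb}(X)$; equivariance plus compatibility with changes of charts guarantees the operators are well defined on orbifold forms. Once this is in place, the rest is the verbatim classical argument. I would also remark that the statement follows from \cite{Wang-Zaffran} via basic cohomology of the associated transversely K\"ahler foliation, but the direct Hodge-theoretic proof above is self-contained given the machinery of Section \ref{sec:ellipticoperators}.
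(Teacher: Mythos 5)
Your proof is correct, and at the decisive step it takes a genuinely different (and shorter) route than the paper's. Both arguments share the same skeleton: $[L_\omega,\Delta]=0$ identifies the map \eqref{eqn:oo} with $L_\omega^{n-k}\colon \SH^k(X)\to\SH^{2n-k}(X)$, and Poincar\'e duality for orbifolds reduces the claim to a one-sided statement between spaces of equal finite dimension. The paper chooses to prove \emph{surjectivity}: given a harmonic $(2n-k)$-form $a$, it runs an induction on $k$ to replace $a$ by a form $a'$ with $L_\omega a'=0$, then uses the commutator $[\Lambda,L_\omega]=(n-p)\Id$ on $p$-forms repeatedly to exhibit $a'=L_\omega^{n-k}\bigl(\tfrac1c\Lambda^{n-k}a'\bigr)$, invoking the pointwise bundle isomorphism only to conclude that the constant $c$ is nonzero. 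You instead prove \emph{injectivity}, which is immediate: the $\mathfrak{sl}(2)$ linear algebra makes $L_\omega^{n-k}$ a pointwise isomorphism on the exterior algebra, hence injective on all orbifold $k$-forms and a fortiori on harmonic ones, and the dimension count finishes. Your route dispenses with the induction and the primitive decomposition entirely, while using exactly the same pointwise input the paper also relies on. Both proofs rest on the same key unproved-in-detail ingredient, namely that the K\"ahler identities (hence $[L_\omega,\Delta]=[\Lambda,\Delta]=0$) hold on the orbifold, and you justify this the same way the paper does for Proposition \ref{prop:3}: the identities are local, $\Gamma_i$-equivariant, and hold on the dense regular locus.
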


\begin{proof}
It is enough to see that (\ref{eqn:oo}) is onto, since by Poincar\'e duality
both spaces have the same dimension. 
As $[L_\omega,\Delta]=0$, then $L_\omega$ sends harmonic forms to harmonic forms. Therefore
we have to see that 
 $$
L_\omega^{n-k}\colon\SH^k(X) \too \SH^{2n-k}(X)
 $$ 
is surjective. We shall prove this by induction on $k=0,1,\ldots, n$.
Take a harmonic $(2n-k)$-form $a$. By induction on $k$ applied to $L_\omega(a)$,
we have that $L_\omega(a)=L_\omega^{n-k+2}(c)$ for a $(k-2)$-form $c$. Therefore
$a'=a-L_\omega^{n-k+1}(c)$ is \emph{primitive}, $L_\omega(a')=0$. Let us see that
the Lefschetz map is surjective for a primitive $a'$. 

We have that $[\Lambda,L_\omega]\,=\,n-p$ on $p$-forms. As $\Lambda a\,=\,0$, we have 
$L_\omega(\Lambda a')\,=\,(k-n) a'$, so $(L_\omega(\Lambda a'))^{n-k}\,=\,c\, a'$, for a constant $c$.
Using repeatedly that $L_\omega\Lambda \,=\,\Lambda L_\omega + c\Id$, for (different constants $c$'s),
we get that $L_\omega^{n-k}\Lambda^{n-k} a'\,=\,c\, a'$, for another constant $c$.
The map
 $$
L_\omega^{n-k}\colon\Omega^k_{orb}(X) \too \Omega^{2n-k}_{orb}(X)
 $$
is an isomorphism (it is a bundle isomorphism). So the above constant $c$ is nonzero. Therefore,
$a'\,=\,L_\omega^{n-k}(b)$ with $b\,=\,\frac1c\Lambda^{n-k}(a')$ a harmonic $k$-form
(since $\Lambda$ also sends harmonic forms to harmonic forms).
This finishes the proof of the theorem.
\end{proof}

\section{Symplectic orbifolds with no K\"ahler orbifold structure}\label{so:nonko}

We shall include two examples of symplectic orbifolds, of dimensions $6$ and $8$, 
taken from the constructions in \cite{BaFeMu} and \cite{FM1}, which cannot admit the structure of an
orbifold K\"ahler manifold. The first one because it does not satisfy the hard Lefschetz property, and the second
one because it is non-formal. Both admit complex and symplectic (orbifold) structures.

Before going to those examples, let us recall the definition of a symplectic orbifold.
\begin{definition}
A symplectic orbifold $(X, \omega)$ consists of a $2n$-dimensional orbifold $X$ and an orbifold $2$-form $\omega$ such
that $d\omega=0$ and $\omega^n>0$ everywhere.
\end{definition}

Note that if $(M, \Omega)$ is a symplectic manifold, with symplectic form $\Omega$,
 and $\Gamma$ is a f{}inite group acting ef{}fectively
on $M$ and preserving $\Omega$, then $X=M/\Gamma$ is a symplectic orbifold. In fact, by 
Remark \ref{rem:forms-co}, $X=M/\Gamma$ is an orbifold, and
the symplectic form $\Omega$ descends to $X$ via the natural projection 
$\pi: M\to X$. The map $\pi$ is differentiable in the orbifold sense (actually it is a submersion).

\subsection{$6$-dimensional example}

Consider the complex Heisenberg group $H_\CC$, that is the
complex nilpotent Lie group of (complex) dimension 3 consisting of matrices of the form
\[
 \begin{pmatrix} 1&u_2&u_3\\ 0&1&u_1\\ 0&0&1\end{pmatrix}.
\]
In terms of the natural
(complex) coordinate functions $(u_1,u_2,u_3)$ on $H_\CC$, we have
that the complex $1$-forms $\mu\,=\,du_1$, $\nu\,=\,du_2$ and
$\theta\,=\,du_3-u_2 \, du_1$ are left invariant, and
\[
 d\mu\,=\,d\nu\,=\,0, \quad d\theta\,=\,\mu\wedge\nu\, .
\]
Let $\Lambda \subset \CC$ be the lattice generated by $1$
and $\zeta\,=\,e^{2\pi i/6}$, and consider the discrete subgroup
$\Gamma\subset H_\CC$ formed by the matrices in which $u_1,u_2,u_3 \in \Lambda$. We def{}ine the compact (parallelizable)
nilmanifold
\[
 M\,=\,\Gamma \backslash H_\CC\, .
\]
We can describe $M$ as a principal torus bundle
\[
 T^2\,=\,\CC/\Lambda \,\too\, M \,\too\, T^4\,=\,(\CC/\Lambda)^2
\]
by the projection $(u_1,\,u_2,\,u_3) \,\longmapsto\, (u_1,\,u_2)$.

Consider the action of the f{}inite group $\ZZ_6$ on $H_\CC$ given by the generator
\begin{eqnarray*}
 \rho\colon H_\CC & \longrightarrow & H_\CC\\
 (u_1, u_2,u_3) &\longmapsto & (\zeta^4\, u_1, \zeta\, u_2,\zeta^5\, u_3) .
\end{eqnarray*}
For this action, clearly $\rho(p\cdot q)\,=\,\rho(p)\cdot \rho(q)$,
for all $p, q \,\in\, H_\CC$, where $\cdot$ denotes the natural group
structure of $H_\CC$. Moreover, we have $\rho(\Gamma)\,=\,\Gamma$. Thus, $\rho$
induces an action on the quotient $M\,=\,\Gamma\backslash H_\CC$. Let $\rho\colon M
\,\longrightarrow\, M$ be the $\ZZ_6$-action. The action on 1-forms is given by
\[
 \rho^*\mu= \zeta^4\, \mu, \quad
 \rho^*\nu= \zeta\, \nu,\quad
 \rho^*\theta= \zeta^5\,\theta.
\]

\begin{proposition}\label{ex:6-dim}
$X\,=\,M/\ZZ_{6}$ is a simply connected, compact, formal  $6$-orbifold
 admitting complex and symplectic structures.
\end{proposition}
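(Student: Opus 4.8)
The plan is to verify, one at a time, each of the four claimed properties of $X = M/\ZZ_6$: that it is an orbifold admitting symplectic and complex structures, that its underlying topological space is simply connected, and that it is formal.

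First I would dispose of the structural claims. Since $\ZZ_6$ acts on the nilmanifold $M$ holomorphically (the generator $\rho$ is visibly a biholomorphism of $H_\CC$ descending to $M$) and effectively, the quotient $X = M/\ZZ_6$ is a complex orbifold by the global-quotient construction recalled in Section~\ref{sec:orbifolds}. For the symplectic structure, one exhibits a $\ZZ_6$-invariant symplectic form on $M$: using the left-invariant forms $\mu,\nu,\theta$ and the transformation rules $\rho^*\mu = \zeta^4\mu$, $\rho^*\nu = \zeta\nu$, $\rho^*\theta = \zeta^5\theta$ (and their conjugates $\rho^*\bar\mu = \zeta^2\bar\mu$, $\rho^*\bar\nu = \zeta^5\bar\nu$, $\rho^*\bar\theta = \zeta\bar\theta$), I would look for a real $2$-form built from products $\mu\wedge\bar\nu$, $\bar\mu\wedge\nu$, $\theta\wedge\bar\theta$, etc., whose total $\zeta$-weight is $0$ and which is closed and non-degenerate. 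For instance $\mu\wedge\bar\mu$ has weight $\zeta^{4+2}=1$, $\nu\wedge\bar\nu$ has weight $\zeta^{1+5}=1$, and $\theta\wedge\bar\theta$ has weight $\zeta^{5+1}=1$, all invariant; but $\mu\wedge\bar\mu$ and $\nu\wedge\bar\nu$ are closed while $d(\theta\wedge\bar\theta)=(\mu\wedge\nu)\wedge\bar\theta - \theta\wedge(\bar\mu\wedge\bar\nu)$, so one must also add a primitive for that, or simply replace $\theta\wedge\bar\theta$ by an invariant combination involving $\mu\wedge\bar\nu$-type terms. Chasing weights, one checks that $\Omega = \sqrt{-1}(\mu\wedge\bar\mu + \nu\wedge\bar\nu + \theta\wedge\bar\theta) + (\text{correction terms of weight }0)$ can be made closed and non-degenerate; this is a finite linear-algebra computation on the six-dimensional space of invariant $2$-forms. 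By Remark~\ref{rem:forms-co} and the paragraph after the definition of symplectic orbifold, such an invariant form descends to a symplectic orbifold form on $X$.

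Next, simple connectivity of the underlying space. The fundamental group $\pi_1(M)$ is the lattice $\Gamma$, and $X = M/\ZZ_6$ as a topological space is the quotient of $M$ by the $\ZZ_6$-action $\rho$. One has an exact sequence relating $\pi_1(X)$ (of the underlying space, not the orbifold fundamental group) to $\Gamma$ and $\ZZ_6$; because $\rho$ has fixed points on $M$ (the action on $T^4$ has fixed points, e.g. the origin, and one lifts), the quotient map $M \to X$ is not a covering, and indeed $\pi_1(X)$ is the quotient of $\Gamma \rtimes \ZZ_6$ by the normal subgroup generated by the isotropy elements. A cleaner route: compute $H_1(X;\ZZ)$ via the invariant cohomology description $H^1(X;\RR) = H^1(M;\RR)^{\ZZ_6}$ from \eqref{orbi-fcoh}, noting $H^1(M;\RR)$ is spanned by $\mu,\bar\mu,\nu,\bar\nu$ on which $\rho^*$ acts with eigenvalues $\zeta^4,\zeta^2,\zeta,\zeta^5$, none equal to $1$, so $H^1(X;\RR) = 0$; then combine with a direct argument (or a theorem of Armstrong on fundamental groups of quotients by groups generated by elements with fixed points) to conclude $\pi_1(X) = 0$. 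This is the step I expect to require the most care, since one must argue at the level of the topological space and not merely its real cohomology.

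Finally, formality. Here the cleanest argument invokes the results already assembled: by Lemma~\ref{fm2:orbifold}, any simply connected compact orbifold of dimension at most $6$ is formal, and $X$ is a simply connected compact $6$-orbifold, so formality is immediate once simple connectivity is established. (Alternatively, one could compute the minimal model directly, or — if the orbifold $X$ turned out to carry an orbifold K\"ahler structure — invoke Theorem~\ref{thm:K-orb-formal}; but the point of this example, and the next one, is precisely that we do \emph{not} want to assume a K\"ahler structure, so Lemma~\ref{fm2:orbifold} is the appropriate tool.) Thus the whole statement follows, with the symplectic-form construction and the simple-connectivity argument being the only parts requiring genuine computation.
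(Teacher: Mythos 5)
Your proposal is correct and follows essentially the same route as the paper: complex and orbifold structure from the effective biholomorphic $\ZZ_6$-action, a $\ZZ_6$-invariant closed non-degenerate $2$-form descending to $X$, simple connectivity of the underlying space, and then Lemma \ref{fm2:orbifold} for formality. The only differences are that the paper writes the symplectic form explicitly as $\omega=-\sqrt{-1}\,\mu\wedge\bar\mu+\nu\wedge\theta+\bar\nu\wedge\bar\theta$ (exactly what your weight count yields, since the closed invariant $2$-forms are spanned by $\mu\wedge\bar\mu$, $\nu\wedge\bar\nu$, $\nu\wedge\theta$, $\bar\nu\wedge\bar\theta$ --- the invariant $2$-forms actually span a $5$-dimensional, not $6$-dimensional, space), and that it simply cites \cite[Proposition 6.1]{BaFeMu} for simple connectivity instead of carrying out the Armstrong-type argument you sketch.
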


\begin{proof}
Since the $\ZZ_6$-action on $M$ is effective, the quotient space $X\,=\,M/\ZZ_{6}$ is an orbifold.
(The singular points of $X$ are determined in \cite[Section 4]{BaFeMu}.)
Clearly $X$ is compact since $M$ is compact. 
In \cite[Proposition 6.1]{BaFeMu}, it is proved that the 6-orbifold $X$ (denoted by 
$\widehat{M}$ in \cite{BaFeMu}) is simply connected. Then, $X$ is formal because any simply connected 
compact orbifold of dimension 6 is formal by Lemma \ref{fm2:orbifold}.

The orbifold $X$ has a complex orbifold structure, as in Proposition \ref{prop:1}. 
We def{}ine the complex $2$-form $\omega$ on $M$ by
 \begin{equation}\label{eqn:omega}
 \omega\,=\,- \sqrt{-1} \,\mu \wedge \bar\mu + \nu\wedge \theta + \bar\nu \wedge \bar\theta\, .
 \end{equation}
Clearly, $\omega$ is a real closed $2$-form on $M$ such that $\omega^3 \,>\, 0$,
so, $\omega$ is a symplectic form on $M$. Moreover, the form $\omega$ is 
$\ZZ_{6}$-invariant. Indeed, 
$ \rho^*\omega=- i \,\mu \wedge \bar\mu + \zeta^6 \nu\wedge \theta +\zeta^{-6}\bar\nu \wedge \bar\theta=\omega$.
Therefore $X$ is a {symplectic $6$-orbifold}, with the symplectic form
$\widehat\omega$ induced by $\omega$. 
\end{proof}

In order to prove that $X$ does not admit any K\"ahler structure, we are going to check that it does
not satisfy the 
hard Lefschetz property for any symplectic form. We compute the cohomology of $X$. 
By a theorem of Nomizu theorem \cite{No}, the cohomology of the nilmanifold $M$ is:
\begin{align*}
 H^0(M,\,\CC) = \ & \la 1\ra,\\
 H^1(M,\,\CC) =\ & \la [\mu], [\bar\mu],[\nu],[\bar\nu]\ra,\\
 H^2(M,\,\CC) = \ & \la [\mu \wedge \bar\mu], [\mu \wedge\bar \nu], [\bar\mu \wedge \nu], [\nu\wedge \bar\nu],
[\mu\wedge \theta], [ \bar\mu\wedge \bar\theta], [\nu\wedge\theta],[ \bar\nu\wedge \bar\theta]\ra,\\
 H^3(M,\,\CC) = \ & \la [\mu\wedge\bar\mu\wedge \theta], [\mu\wedge\bar\mu\wedge \bar\theta],
 [\nu\wedge\bar\nu\wedge \theta], [\nu\wedge\bar\nu\wedge \bar\theta],
 [\mu\wedge\nu\wedge \theta], [\bar\mu \wedge \bar\nu \wedge \bar\theta]\\
 & [\mu \wedge \bar\nu \wedge \theta], [\mu \wedge \bar\nu \wedge \bar\theta],
 [\bar\mu \wedge \nu \wedge \theta],[\bar\mu \wedge \nu \wedge \bar\theta]\rangle,\\
H^4(M,\,\CC) = \ & \la [\mu \wedge \bar\mu \wedge \nu \wedge \theta],
 [\mu \wedge \bar\mu \wedge \bar\nu \wedge \bar\theta],
 [\bar\mu \wedge \nu \wedge \bar\nu \wedge \bar\theta],
 [\mu \wedge \nu \wedge \bar\nu \wedge \theta],\\
 & [\mu \wedge \bar\mu \wedge \theta \wedge \bar\theta],
 [\nu \wedge \bar\nu \wedge \theta \wedge \bar\theta],
 [\mu \wedge \bar\nu \wedge \theta \wedge \bar\theta],
 [\bar\mu \wedge \nu \wedge \theta \wedge \bar\theta]\ra , \\
 H^5(M,\,\CC) = \ & \la [\mu \wedge \bar\mu \wedge \nu \wedge
 \theta \wedge \bar\theta],
 [\mu \wedge \bar\mu \wedge \bar\nu \wedge
 \theta \wedge \bar\theta],
 [\mu \wedge \nu \wedge \bar\nu \wedge
 \theta \wedge \bar\theta], [\bar\mu \wedge \nu \wedge \bar\nu \wedge
 \theta \wedge \bar\theta] \ra, \\
 H^6(M,\,\CC) =\ & \la [\mu \wedge \bar\mu \wedge \nu \wedge \bar\nu
 \wedge \theta \wedge \bar\theta]\ra.
\end{align*}
According with \eqref{orbi-fcoh}, any $\ZZ_6$-invariant $k$-form on $M$ defines an
orbifold $k$-form on $X$, and vice-versa. Moreover, the cohomology $H^*(X)\,=\,H^*(M)^{\ZZ_6}$ is: 
\begin{align*}
H^0(X,\,\CC) = \ & \la 1\ra,\\
H^1(X,\,\CC) = \ &0,\\
H^2(X,\,\CC) = \ & \la [\mu \wedge \bar\mu], [\nu\wedge \bar\nu],
 [\nu\wedge\theta],[\bar\nu\wedge \bar\theta]\ra,\\
H^3(X,\,\CC) = \ & 0,\\
H^4(X,\,\CC) = \ & \la [\mu \wedge \bar\mu \wedge \nu \wedge \theta],
 [\mu \wedge \bar\mu \wedge \bar\nu \wedge \bar\theta],
 [\mu \wedge \bar\mu \wedge \theta \wedge \bar\theta],
 [\nu \wedge \bar\nu \wedge \theta \wedge \bar\theta]\ra ,\\
H^5(X,\,\CC) = \ & 0 , \\
H^6(X,\,\CC) = \ & \la [\mu \wedge \bar\mu \wedge \nu \wedge \bar\nu
 \wedge \theta \wedge \bar\theta]\ra,
\end{align*}
where we use the same notation for the $\ZZ_6$-invariant forms on $M$ and those
induced on the orbifold $X$.

The cohomology class
\[
[\beta]\,=\,[\nu\wedge\bar\nu] \,\in\, H^2(X)
\] 
satisfies the equation $[\beta]\wedge [\alpha_1]\wedge [\alpha_2]\,=\,0$ for any 
$[\alpha_1],\,[\alpha_2]\,\in\, H^2(X)$. Therefore this class is always in the kernel of 
 $$
 L_{\omega'}\colon H^2(X) \,\too\, H^4(X),
 $$
for any (orbifold) symplectic form $\omega'$. So we have the following:

\begin{proposition}\label{non-Lefschetz}
The orbifold $X$ does not admit an orbifold K\"ahler structure since it does not satisfy
the hard Lefschetz property for any symplectic form.
\end{proposition}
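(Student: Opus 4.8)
The strategy is to contradict the hard Lefschetz property (Theorem~\ref{thm:hard-Lefschetz}): I would show that for \emph{every} orbifold symplectic form $\omega'$ on $X$ the Lefschetz map fails to be an isomorphism in the middle range, hence in particular no orbifold K\"ahler form can exist. Since $\dim_\CC X=3$ and $H^1(X)=H^5(X)=0$, the only degree at which hard Lefschetz carries information is $k=2$, where it asserts that
$$
L_{\omega'}\colon H^2(X)\,\too\, H^4(X)
$$
is an isomorphism. From the computation $H^*(X)=H^*(M)^{\ZZ_6}$ recalled above, $\dim H^2(X)=\dim H^4(X)=4$, so it suffices to prove that this map is \emph{not injective} for any $\omega'$, equivalently that there is a nonzero class in $H^2(X)$ annihilated by the symplectic class $[\omega']\in H^2(X)$.

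For this I would take the class $[\beta]=[\nu\wedge\bar\nu]\in H^2(X)$, which is one of the four listed generators and hence nonzero, and check that $[\beta]\wedge[\alpha]=0$ in $H^4(X)$ for every $[\alpha]\in H^2(X)$. Writing $[\alpha]$ as a combination of the generators $[\mu\wedge\bar\mu]$, $[\nu\wedge\bar\nu]$, $[\nu\wedge\theta]$, $[\bar\nu\wedge\bar\theta]$, the three products
$$
[\nu\wedge\bar\nu]\wedge[\nu\wedge\bar\nu]=[\nu\wedge\bar\nu]\wedge[\nu\wedge\theta]=[\nu\wedge\bar\nu]\wedge[\bar\nu\wedge\bar\theta]=0
$$
vanish already on the level of forms, since $\nu\wedge\nu=\bar\nu\wedge\bar\nu=0$; and $[\nu\wedge\bar\nu]\wedge[\mu\wedge\bar\mu]=[\mu\wedge\bar\mu\wedge\nu\wedge\bar\nu]$ vanishes in cohomology because the form $\mu\wedge\bar\mu\wedge\nu\wedge\bar\nu$ is exact, namely $\mu\wedge\bar\mu\wedge\nu\wedge\bar\nu=-d(\mu\wedge\nu\wedge\bar\theta)$, using $d\bar\theta=\bar\mu\wedge\bar\nu$ and $d\mu=d\nu=0$. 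Hence $[\omega']\wedge[\beta]=0$ in $H^4(X)$ for any symplectic form, so $L_{\omega'}$ is not injective, hence not an isomorphism.

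By Theorem~\ref{thm:hard-Lefschetz} this is incompatible with $(X,\omega')$ being a K\"ahler orbifold, and since $\omega'$ was an arbitrary orbifold symplectic form, $X$ admits no orbifold K\"ahler structure. The only step that is not completely formal is verifying that $[\mu\wedge\bar\mu\wedge\nu\wedge\bar\nu]=0$ in $H^4(X)$, i.e.\ exhibiting the explicit primitive $\mu\wedge\nu\wedge\bar\theta$; everything else reduces to $\nu\wedge\nu=\bar\nu\wedge\bar\nu=0$ and the equality $\dim H^2(X)=\dim H^4(X)$. In particular, since $X$ is formal by Proposition~\ref{ex:6-dim}, formality is not the obstruction here, and it is precisely the failure of the hard Lefschetz property that rules out a K\"ahler orbifold structure.
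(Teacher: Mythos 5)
Your proof is correct and follows essentially the same route as the paper: both exhibit $[\nu\wedge\bar\nu]\in H^2(X)$ as a nonzero class annihilated by $L_{\omega'}$ for every orbifold symplectic form $\omega'$, contradicting hard Lefschetz. The only (harmless) difference is that you verify $[\nu\wedge\bar\nu]\wedge[\alpha]=0$ directly in $H^4(X)$ via the explicit primitive $\mu\wedge\nu\wedge\bar\theta$ (which is indeed $\ZZ_6$-invariant, so it descends to $X$), whereas the paper checks that all triple products $[\beta]\wedge[\alpha_1]\wedge[\alpha_2]$ vanish at the level of forms in $H^6(X)$ and concludes via Poincar\'e duality.
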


\subsection{$8$-dimensional example}

Consider again the complex Heisenberg group $H_{\CC}$ and set $G=H_\CC \times \CC$, where $\CC$ is the additive group of
complex numbers. We denote by $u_4$ the coordinate function
corresponding to this extra factor. In terms of the natural
(complex) coordinate functions $(u_1,u_2,u_3,u_4)$ on $G$, the complex $1$-forms $\mu=du_1$, $\nu=du_2$,
$\theta=du_3-u_2 \, du_1$ and $\eta=du_4$ are left invariant, and
$$
 d\mu\,=\,d\nu\,=\,d\eta\,=\,0, \quad d\theta\,=\,\mu\wedge\nu\, .
$$

Let $\Lambda \subset \CC$ be the lattice generated by $1$
and $\zeta=e^{2\pi \sqrt{-1}/3}$,
and consider the discrete subgroup
$\Gamma\subset G$ formed by the matrices in which $u_1,u_2,u_3,
u_4 \in \Lambda$. We define the compact (parallelizable)
nilmanifold
 $$
 M\,=\,\Gamma \backslash G\, .
 $$
We can describe $M$ as a principal torus bundle
 $$
 T^2\,=\,\CC/\Lambda \,\too \,M \,\too \,T^6\,=\,(\CC/\Lambda)^3\, ,
 $$
by the projection $(u_1,\,u_2,\,u_3,\,u_4) \,\longmapsto\, (u_1,\,u_2,\,u_4)$.

Now introduce the following action of the finite group $\ZZ_{3}$
 \begin{eqnarray*}
 \rho\colon G & \longrightarrow & G\\
 (u_1, u_2,u_3, u_4) &\longmapsto & (\zeta\, u_1, \zeta\, u_2,\zeta^2\, u_3,\zeta\, u_4) .
 \end{eqnarray*}
Note that $\rho(p\cdot q)\,=\,\rho(p)\cdot \rho(q)$,
for $p, q \in G$, where the dot denotes the natural group
structure of $G$. The map $\rho$ is a particular case of a
homothetic transformation (by $\zeta$ in this case) which is well
defined for all nilpotent simply connected Lie groups with graded
Lie algebra. Moreover $\rho(\Gamma)=\Gamma$, therefore $\rho$
induces an action on the quotient $M=\Gamma\backslash G$. 
 This action is free away from $3^4$ fixed points corresponding to $u_i =
n/(1-\zeta)$, for $n = 0,1$ and $2$. 

The
action on the forms is given by
 $$
 \rho^*\mu= \zeta\, \mu, \quad
 \rho^*\nu= \zeta\, \nu,\quad
 \rho^*\theta= \zeta^2\,\theta, \quad
 \rho^*\eta= \zeta\,\eta.
 $$

\begin{proposition} \label{prop:1}
$X=M/\ZZ_{3}$ is an {$8$-orbifold} admitting complex and symplectic structures.
\end{proposition}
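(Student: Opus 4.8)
The plan is to mimic the proof of Proposition~\ref{ex:6-dim}, the only change being the extra factor $\CC$ with coordinate $u_4$ and left invariant form $\eta$. First I would record that $M=\Gamma\backslash G$ is a compact complex manifold: $G=H_\CC\times\CC$ is a complex Lie group of complex dimension $4$, and $\Gamma\subset G$ is a cocompact discrete subgroup acting by holomorphic left translations, so the complex structure of $G$ descends to $M$. The homothety $\rho\colon G\to G$ is a holomorphic group automorphism with $\rho(\Gamma)=\Gamma$, hence it induces a biholomorphism of $M$ generating a $\ZZ_3$-action which is effective, since neither $\rho$ nor $\rho^2$ is the identity on $M$ (both move $u_1$). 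Therefore, as recalled in Section~\ref{sec:orbifolds} for effective global quotients, $X=M/\ZZ_3$ is a compact orbifold of real dimension $8$. To see that $X$ is moreover a \emph{complex} orbifold, note that the action is free away from the $3^4$ fixed points listed above; away from these points $X$ is a complex manifold (quotient of a complex manifold by a free holomorphic action), while near each fixed point $p$ -- where the isotropy is all of $\ZZ_3$, since $\mathrm{Fix}(\rho)=\mathrm{Fix}(\rho^2)$ -- a holomorphic local chart averaged over $\ZZ_3$ produces holomorphic coordinates in which $\ZZ_3$ acts linearly, i.e. a complex orbifold chart with structure group a cyclic subgroup of $\GL(4,\CC)$, compatible with the holomorphic changes of charts coming from $M$.

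For the symplectic structure I would exhibit an explicit $\ZZ_3$-invariant, real, closed $2$-form on $M$ with nonvanishing top power and push it down to $X$. A convenient choice, parallel to \eqref{eqn:omega}, is
$$\omega\,=\,-\sqrt{-1}\,\mu\wedge\bar\mu\,-\,\sqrt{-1}\,\eta\wedge\bar\eta\,+\,\nu\wedge\theta\,+\,\bar\nu\wedge\bar\theta\,.$$
Closedness follows from $d\mu=d\nu=d\eta=0$ and $d\theta=\mu\wedge\nu$, as the only non-closed generator $\theta$ enters just through $\nu\wedge\theta$ and $d(\nu\wedge\theta)=-\nu\wedge\mu\wedge\nu=0$ (and similarly for the conjugate). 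Reality is clear from $\overline{-\sqrt{-1}\,\mu\wedge\bar\mu}=-\sqrt{-1}\,\mu\wedge\bar\mu$ and $\overline{\nu\wedge\theta}=\bar\nu\wedge\bar\theta$, and $\ZZ_3$-invariance follows from $\rho^*\mu=\zeta\mu$, $\rho^*\nu=\zeta\nu$, $\rho^*\theta=\zeta^2\theta$, $\rho^*\eta=\zeta\eta$ because each summand has total $\zeta$-weight $\equiv 0\pmod 3$. Finally, each of the four summands is decomposable, so in the multinomial expansion of $\omega^4$ every term with a repeated summand vanishes; hence $\omega^4$ equals $4!$ times the product of the four summands, which is a nonzero constant multiple of $\mu\wedge\bar\mu\wedge\nu\wedge\bar\nu\wedge\theta\wedge\bar\theta\wedge\eta\wedge\bar\eta$. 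Thus $\omega$ is nondegenerate and $(M,\omega)$ is symplectic; being $\ZZ_3$-invariant, $\omega$ descends by Remark~\ref{rem:forms-co} to a closed orbifold $2$-form $\widehat\omega$ on $X$ with $\widehat\omega^4\neq 0$, so $(X,\widehat\omega)$ is a symplectic orbifold.

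The one step that is not completely routine is the verification of the complex orbifold structure at the singular points: one must check that the $\ZZ_3$-action can be holomorphically linearized near each fixed point, so that the charts around the singular points are genuine complex orbifold charts (with group in $\GL(4,\CC)$) compatible with the changes of charts. Everything else -- the closedness, reality, $\ZZ_3$-invariance and non-degeneracy of $\omega$ -- is the same short computation with left invariant forms that already appeared in the $6$-dimensional case.
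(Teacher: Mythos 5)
Your proof is correct and follows essentially the same route as the paper: effectiveness of the $\ZZ_3$-action gives the orbifold structure, the holomorphic action descends the complex structure, and an explicit $\ZZ_3$-invariant real closed $2$-form with $\omega^4\neq 0$ descends to a symplectic orbifold form. Your $\omega$ differs from the paper's only in the signs of the $\mu\wedge\bar\mu$ and $\eta\wedge\bar\eta$ terms (immaterial for reality, closedness, invariance and nondegeneracy), and you merely spell out in more detail the linearization at fixed points and the $\omega^4$ computation, which the paper leaves as "one can check".
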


\begin{proof}
Just as in Proposition \ref{ex:6-dim}, it turns out that $X$ is an $8$-orbifold since the $\ZZ_{3}$-action on $M$ is 
effective. The nilmanifold $M$ is a complex manifold whose complex structure $J$
coincides with
the multiplication by $\sqrt{-1}$ on each tangent space $T_{p}M$, $p\in M$. 
Then one can check that $J$ commutes with the
$\ZZ_3$-action $\rho$ on $M$, that is $(\rho_{*})_p\circ J_{p} = J_{\rho(p)}\circ (\rho_{*})_p$,
for any point $p\in M$. Hence, $J$ induces a complex structure on the quotient $X=M/\ZZ_{3}$.

The complex $2$-form
 $$
 \omega\,=\, \sqrt{-1} \,\mu \wedge \bar\mu + \nu\wedge \theta +
 \bar\nu \wedge \bar\theta+ i \,\eta \wedge \bar\eta
 $$
is actually a real form which is clearly closed and
which has the property that $\omega^4\,\not=\,0$. Thus $\omega$ is a symplectic
form on $M$. Moreover, $\omega$ is $\ZZ_3$-invariant. Hence the
space $X\,=\,M/\ZZ_{3}$ is a symplectic orbifold, with the symplectic form
$\widehat\omega$ induced by $\omega$. 
\end{proof}

The orbifold $X$ does not admit a K\"ahler orbifold structure because it is non-formal, as shown in the following theorem.

\begin{theorem}\label{thm:nonformal}
 The orbifold $X$ is non-formal.
\end{theorem}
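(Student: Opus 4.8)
The plan is to exhibit a non-trivial (triple) Massey product on $X$, which by Theorem \ref{theo:Massey products} forces non-formality. By Lemma \ref{lemm:massey-models} and Remark \ref{rem:forms-co}, Massey products on $X$ can be computed using the model given by the $\ZZ_3$-invariant forms on the nilmanifold $M$; in fact the invariant part of the Chevalley--Eilenberg algebra of the graded nilpotent Lie algebra of $G$ is a finite-dimensional DGA model for $X$, so the computation reduces to linear algebra on the left-invariant forms $\mu,\bar\mu,\nu,\bar\nu,\theta,\bar\theta,\eta,\bar\eta$. First I would identify which of these forms are $\ZZ_3$-invariant: since $\rho^*\mu=\zeta\mu$, $\rho^*\nu=\zeta\nu$, $\rho^*\theta=\zeta^2\theta$, $\rho^*\eta=\zeta\eta$ (and $\zeta^3=1$), the degree-one invariant forms are exactly the trivial ones, but products like $\mu\wedge\nu$, $\mu\wedge\bar\eta$, $\mu\wedge\bar\theta$, etc., which transform by $\zeta^{a}$ with $a\equiv 0\pmod 3$, survive.

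The key relation $d\theta=\mu\wedge\nu$ is what produces the Massey product. I would take cohomology classes of $\ZZ_3$-invariant closed $1$- or $2$-forms whose pairwise products vanish in cohomology precisely because $\mu\wedge\nu$ is exact (bounded by $\theta$). A natural candidate is something built from $[\mu\wedge\bar\eta]$, $[\nu\wedge\bar\eta]$ type classes together with a class pairing off against $\theta$: e.g. classes $[a_1],[a_2],[a_3]\in H^2(X)$ chosen so that $a_1\wedge a_2$ is exact with primitive involving $\theta\wedge(\cdots)$ while $a_2\wedge a_3$ is exact with an unrelated primitive; then the Massey product representative $a_1\cdot a_{2,3}\pm a_{1,2}\cdot a_3$ is a closed invariant form whose class must be checked to be nonzero modulo the indeterminacy subspace $[a_1]\cdot H^*(X)+[a_3]\cdot H^*(X)$. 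Concretely, using the cohomology of $M$ listed via Nomizu's theorem and extracting the $\ZZ_3$-invariant part (exactly as the authors do for the $6$-dimensional case), one writes down $H^*(X)$ explicitly and then verifies the two conditions: (i) the relevant degree-$(p_1+p_2+p_3)$ products vanish in $H^*(X)$, so the Massey product is defined; (ii) the resulting class is not in the indeterminacy.

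The main obstacle is step (ii): showing the Massey product representative is genuinely nonzero after quotienting by the indeterminacy ideal. This requires a careful bookkeeping of the invariant cohomology $H^*(X)=H^*(M)^{\ZZ_3}$ in the relevant degree, and checking that no element of $[a_1]\cdot H^{\ast}(X)+[a_3]\cdot H^{\ast}(X)$ equals the computed cocycle class. Because $M$ is a $2$-step nilmanifold with very rigid minimal model, all products can be computed on the level of left-invariant forms, so this is a finite check; the subtlety is only in choosing the classes $[a_1],[a_2],[a_3]$ so that the indeterminacy is small enough. Once such a triple is pinned down, Lemma \ref{lemm:massey-models} transports the non-triviality to $(\Omega^*_{orb}(X),d)$ and Theorem \ref{theo:Massey products} yields that $X$ is not formal, proving the theorem.
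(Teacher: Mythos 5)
There is a genuine gap: your chosen obstruction cannot work on this example. You commit to exhibiting a non-trivial \emph{triple} Massey product $\langle [a_1],[a_2],[a_3]\rangle$, which lives in $H^{p_1+p_2+p_3-1}(X)$. But the orbifold $X=M/\ZZ_3$ has no odd-degree cohomology at all: every degree-one invariant form transforms by $\zeta$ or $\bar\zeta$, so $H^1(X)=0$; the paper's computation of $H^3(M)=A\oplus\bar A$ shows $\ZZ_3$ acts on $A$ by $\zeta$ and on $\bar A$ by $\bar\zeta$, so $H^3(X)=0$; and Poincar\'e duality for the compact oriented $8$-orbifold then gives $H^5(X)=H^7(X)=0$. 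Consequently all classes $[a_i]$ you could use are of even degree, the target group $H^{p_1+p_2+p_3-1}(X)$ is of odd degree, hence zero, and every defined triple Massey product on $X$ is automatically trivial. No amount of careful bookkeeping in step (ii) of your plan can rescue this; the obstruction you are computing vanishes for purely degree-theoretic reasons.

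The paper circumvents exactly this problem by using the $a$-Massey product $\langle a; b_1,b_2,b_3\rangle$ of \cite{CFM} (Theorem \ref{theo:amassey and formality}), a quadruple-type product: with $a=[\tilde\alpha]$, $\alpha=\mu\wedge\bar\mu$, and $b_i=[\tilde\beta_i]$ for $\beta_1=\nu\wedge\bar\nu$, $\beta_2=\nu\wedge\bar\eta$, $\beta_3=\bar\nu\wedge\eta$, the primitives $\xi_i$ have degree $3$ and the product $\bigl[\sum \pm\,\xi_{i}\wedge\xi_{j}\wedge\beta_k\bigr]$ lands in $H^8(X)\neq 0$, an even-degree group. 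The vanishing $H^3(X)=0$, far from being an obstacle, is what kills the indeterminacy (via \cite[Proposition 2.7]{CFM}), and non-vanishing is checked by integrating over $X$ using \eqref{orbi-int}. Your reduction to the $\ZZ_3$-invariant left-invariant forms via Remark \ref{rem:forms-co} and Nomizu's theorem is the right setting, but you must replace the triple Massey product by the $a$-Massey product (or a quadruple Massey product) to detect the non-formality.
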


\begin{proof}
We start by considering the nilmanifold $M$. Consider the following closed forms:
 $$
 \alpha = \mu \wedge\bar\mu, \quad
 \beta_1 = \nu\wedge\bar\nu,\quad
 \beta_2 = \nu\wedge\bar\eta,\quad
 \beta_3 = \bar\nu\wedge\eta .
 $$
Then
 $$
 \alpha \wedge \beta_1 = d(-\theta\wedge\bar\mu\wedge\bar\nu),
\quad
 \alpha \wedge \beta_2 = d(-\theta\wedge\bar\mu\wedge\bar\eta),
\quad
 \alpha \wedge \beta_3 = d(\bar\theta\wedge \mu\wedge\eta).
 $$
All the forms $\alpha$, $\beta_1$, $\beta_2$, $\beta_3$,
$\xi_1=-\theta\wedge\bar\mu\wedge\bar\nu$,
$\xi_2=-\theta\wedge\bar\mu\wedge\bar\eta$ and $\xi_3=\bar\theta\wedge
\mu\wedge\eta$ are $\ZZ_3$-invariant. Hence by \eqref{orbi-fcoh} they descend to orbifold forms (denoted with a 
${}^\sim$) on the quotient $X=M/\ZZ_3$. 

We consider the $a$-Massey product 
 $$
 \la a; b_1,b_2,b_3\ra,
 $$
for $a=[\tilde{\alpha}], b_i=[\tilde{\beta_i}]\in H^2(X)$, $i=1,2,3$. By Nomizu's theorem
mentioned earlier, the cohomology of $M$ up to degree 3 is
\begin{align*}
 H^0(M,\CC) = \ & \la 1\ra,\\
 H^1(M,\CC) =\ & \la [\mu], [\bar\mu],[\nu],[\bar\nu],[\eta],[\bar\eta]\ra,\\
 H^2(M,\CC) = \ & \la [\mu \wedge \bar\mu], [\mu \wedge\bar \nu], [\mu \wedge \theta],[\mu \wedge \eta],[\mu \wedge \bar\eta],[\bar\mu \wedge \nu], [\bar\mu \wedge \bar\theta],[\bar\mu \wedge \eta],[\bar\mu \wedge \bar\eta],[\nu\wedge \bar\nu],\\
 & [\nu\wedge \theta],[\nu\wedge \eta],[\nu\wedge \bar\eta],[\bar\nu \wedge \bar\theta],[\bar\nu \wedge \eta],[\bar\nu \wedge \bar\eta],[\bar\eta \wedge \bar\eta]\ra,\\
 H^3(M,\CC) = \ & A\oplus\bar{A}\, 
\end{align*}

where
\begin{align*}
A = & \la [\mu\wedge\bar\mu\wedge\bar\theta],[\mu\wedge\bar\mu\wedge \eta],[\mu\wedge\nu\wedge \theta],[\mu\wedge\bar\nu\wedge \bar\theta],[\mu\wedge\bar\nu\wedge \eta],[\mu\wedge\theta\wedge\eta],[\mu\wedge\eta\wedge\bar\eta],\\
&[\bar\mu\wedge\nu\wedge\bar\theta],[\bar\mu\wedge\nu\wedge\eta],[\bar\mu\wedge\bar\theta\wedge\eta],[\nu\wedge\bar\nu\wedge\bar\theta],[\nu\wedge\bar\nu\wedge\eta],[\nu\wedge\theta\wedge\eta],[\nu\wedge\eta\wedge\bar\eta],\\
 &[\bar\nu\wedge\bar\theta\wedge\eta]\rangle.
\end{align*}

Now $\ZZ_3$ acts on $A$ by multiplication with $\zeta$ and on $\bar A$ by multiplication with $\bar\zeta$, hence $H^3(X)=H^3(M)^{\ZZ_3}=0$.
By \cite[Proposition 2.7]{CFM}, the $a$-Massey product $\la a; b_1,b_2,b_3\ra$ has no indeterminacy.

We denote by $q$ the projection $M\,\longrightarrow\, X$, and compute
\begin{align*}
\la a; b_1,b_2,b_3\ra & \ = [\tilde\xi_1\wedge\tilde\xi_2\wedge\tilde\beta_3
+\tilde\xi_2\wedge\tilde\xi_3\wedge\tilde\beta_1
+\tilde\xi_3\wedge\tilde\xi_1\wedge\tilde\beta_2]=\\
&\ = q_*[\xi_1\wedge\xi_2\wedge\beta_3
+\xi_2\wedge\xi_3\wedge\beta_1
+\xi_3\wedge\xi_1\wedge\beta_2]=\\
& \ = 2q_*[\theta\wedge \mu\wedge\nu\wedge\eta\wedge \bar\theta\wedge
\bar\mu\wedge\bar\nu\wedge\bar\eta]
\end{align*}

which is non-zero,  since by \eqref{orbi-int} we have
\begin{align*}
\int_X\la a; b_1,b_2,b_3\ra & \ = 2\int_Xq_*[\theta\wedge \mu\wedge\nu\wedge\eta\wedge \bar\theta\wedge \bar\mu\wedge\bar\nu\wedge\bar\eta]=\\
&\ = 6\int_M[\theta\wedge \mu\wedge\nu\wedge\eta\wedge \bar\theta\wedge \bar\mu\wedge\bar\nu\wedge\bar\eta]\neq 0.
\end{align*}

By Theorem \ref{theo:amassey and formality} and Definition \ref{def:orbifolds-minimalmodel}, the orbifold $X$ is non-formal.
\end{proof}

\section{Simply connected Sasakian manifolds}\label{non-formal-sasakian}

First, we recall some definitions and results on Sasakian manifolds (see
\cite{BG} for more details).

An odd-dimensional Riemannian manifold $(N,g)$ is {\em Sasakian} if its 
cone $(N\times{\mathbb{R}}^+, g^c = t^2 g+dt^2)$ is K\"ahler, that
is the cone metric $g^c \,=\, t^2 g+dt^2$ admits a compatible
integrable almost complex structure $J$ so that
$(N\times{\mathbb{R}}^+, g^c = t^2 g+dt^2, J)$ is a K\"ahler
manifold. In this case the Reeb vector field $\xi\,=\,J\partial_t$ is
a Killing vector field of unit length. The corresponding $1$-form
$\eta$ defined by $\eta(X)\,=\,g(\xi,\,X)$, for any vector field $X$ on
$N$, is a contact form, meaning
$\eta\wedge ({d} \eta)^n\,\not=\,0$ at every point of $N$, where $\dim N\,=\,2n+1$.

A Sasakian structure on $N$ is called \textit{quasi-regular} if there is a positive
integer $\delta$ satisfying the condition
that each point of $N$ has a coordinate chart
$(U\, ,t)$ with respect to $\xi$ (the coordinate $t$ is in the direction of $\xi$)
such that each leaf of $\xi$ passes through $U$ at most $\delta$ times. If
$\delta\,=\, 1$, then the Sasakian structure is called \textit{regular}. (See
\cite[p. 188]{BG}.) A result of \cite{OV} says that if $N$ admits a Sasakian structure,
then it admits also a quasi-regular Sasakian structure.

If $M$ is a K\"ahler manifold whose K\"ahler form $\omega$ 
defines an integral cohomology class, then the total space of the circle bundle 
$S^1 \hookrightarrow N \stackrel{\pi}{\longrightarrow} M$ with Euler class $[\omega]\in 
H^2(M,\mathbb{Z})$ is a regular Sasakian manifold with contact form $\eta$ such that $d 
\eta \,=\, \pi^*(\omega)$. The converse also holds: if $N$ is a regular Sasakian structure 
then the space of leaves $X$ is a K\"ahler manifold, and we have a circle
bundle $S^1\hookrightarrow N \,\rightarrow\, M$ as above. 
If $N$ has a quasi-regular Sasakian structure, then the space of leaves $M$ is a K\"ahler
orbifold with cyclic quotient singularities, and there is an orbifold circle bundle
$S^1 \hookrightarrow N\,\rightarrow\, X$ such that the contact form $\eta$ satisfies $d\eta\,=\,\pi^*(\omega)$,
where $\omega$ is the orbifold K\"ahler form. {{Note that the map $\pi$ is an orbifold submersion, 
so that $\pi^*(\omega)$ is a well-defined (smooth) $2$-form on the total space $N$, which is a smooth
manifold. This defines a Sasakian structure on $N$ by \cite[Theorem 20]{MRT}.}}

\subsection{A simply connected non-formal Sasakian manifold} \label{subsec:7.1}

Examples of simply connected non-formal Sasakian manifolds, of dimension $2n+1\geq 7$, are given in \cite{BFMT}. There it is proved
that those examples are non-formal because they are not 3-formal, in the sense of Definition \ref{def:primera}. Here we 
show the non-formality proving that they have a non-trivial triple Massey product. 

Note that
if $N$ is a simply connected, compact and non-formal manifold (not necessarily 
Sasakian), then $\dim N\,\geq\, 7$.
Indeed, Theorem \ref{fm2:criterio2} gives that simply connected compact manifolds of dimension at most $6$
are formal \cite{FM, N-Miller}. Moreover, a $7$-dimensional
simply connected Sasakian manifold is formal if and only if all the triple Massey products are trivial \cite{MT}.
 
To construct a simply connected non-formal Sasakian $7$-manifold, 
we consider the K\"ahler manifold $M\,=\,S^2 \times S^2\times S^2$ with K\"ahler form
$$
\omega\,=\,\omega_1 +\omega_2 + \omega_3\, ,
$$
where $\omega_1$, $\omega_2$ and $\omega_3$ are the generators of the integral cohomology group of each of the 
$S^2$-factors on $S^2 \times S^2\times S^2$.
Let $N$ be the total space $N$ of the principal $S^1$-bundle
 $$
S^1 \,\hookrightarrow \,N \,\too\, M\,=\,S^2\times S^2\times S^2\, ,
 $$ 
with Euler class $[\omega]\in H^2(M,\mathbb{Z})$. Then, $N$ is a simply connected compact (regular) Sasakian manifold,
with contact form $\eta$ such that $d \eta \,=\, \pi^*(\omega)$.

From now on, we write $a_i\,=\,[\omega_i]\in H^2(S^2)$. Since $M\,=\,S^2\times S^2\times S^2$ is formal, a model of $M$ is
$(H^*(S^2\times S^2\times S^2), 0)$, where $H^*(S^2\times S^2\times S^2)$ is the de Rham 
cohomology algebra of $S^2\times S^2\times S^2$, that is
\begin{align*} \label{eqn:vic2}
 H^0(M)&=\langle 1\rangle, \nonumber\\
 H^1(M)&= H^3(M)\,=\,H^5(M)\,=\,0\,, \nonumber\\
 H^2(M)&= \langle a_1,\, a_2,\, a_3\rangle, \\
H^4(M)&=\langle a_1\cdot a_2,\, a_1\cdot a_3,\, a_2\cdot a_3\rangle, \nonumber\\
H^6(M)&=\langle a_1\cdot a_2\cdot a_3\rangle. \nonumber
\end{align*}
Therefore, a model of $N$ is the DGA $\Big(H^*(M)\otimes\bigwedge(x),\,d\Big)$, where $|x| \,=\,1$, 
$d(H^*(M))\,=\,0$ and $dx\,=\,a_1+a_2+a_3$. By Lemma \ref{lemm:massey-models}, we know that Massey products on a manifold can be computed by using any model for the manifold. 
Since $a_1\cdot a_1 \, = \, 0$ and $a_1\cdot a_2 \, = \, \frac{1}{2}d(a_1\,\cdot x +a_2\,\cdot x -a_3\,\cdot x)$,
we have that the (triple) Massey product $\langle a_1, a_1, a_2 \rangle\,=\,\frac{1}{2}[(a_1\,\cdot a_2-a_1\,\cdot a_3)\cdot x]$ is defined and it is non-trivial. {{Note that there is no indeterminacy of the Massey product, since
it lives in $a_1 \cdot H^3(N) + a_2 \cdot H^3(N)$, but $H^3(N)=0$, since by the
Gysin sequence, it equals the kernel of $[\omega]: H^2(M)\to H^4(M)$, which is an isomorphism.}}
So $N$ is non-formal.

The case $n\,>\,3$ is similar and it is deduced as follows. Consider $B\,=\,S^2\times \stackrel{(n)}{\ldots} \times S^2$. 
Let $a_1,\ldots,a_{n}\in H^2(B)$ be the cohomology classes given by each of the $S^2$-factors. Then the K\"ahler class
is given by $[\omega]\,=\,a_1+\cdots + a_{n}$. Consider the circle bundle 
$$
S^1\, \hookrightarrow \, N \,\too\, B
$$
with first Chern class equal to $[\omega]$.

Using again Lemma \ref{lemm:massey-models}, we know that Massey products on $N$ can be computed by using any model for $N$. 
Since $B$ is formal, a model of $B$ is the DGA $\Big(H^*(B),\, 0\Big)$. Thus,
a model of $N$ is the DGA $\Big(H^*(B)\otimes\bigwedge(x),\,d\Big)$, where $|x| \,=\,1$, 
$d(H^*(B))\,=\,0$ and $dx\,=\,a_1+a_2+\ldots +a_n$. 
Now, one can check that $a_1\cdot a_1 \, = \, 0$ and 
$$
a_1\cdot a_2\ldots a_{n-2}\cdot a_{n-1} \, = \, \frac{1}{2} d\Big((a_1\cdot a_2\ldots a_{n-2}+a_2\cdot a_3\ldots a_{n-2}\cdot a_{n-1}-
a_2\cdot a_3\ldots a_{n-2}\cdot a_n)\cdot x\Big).
$$
Thus the Massey product $\langle a_1,\, a_1,\, a_2\cdot a_3 \ldots a_{n-2}\cdot a_{n-1}\rangle$ is 
defined and a representative is $[(a_1\,\cdot a_2\ldots a_{n-2}\cdot a_{n-1} -a_1\,\cdot a_2\ldots 
a_{n-2}\cdot a_{n})\cdot x]$ which is non-trivial. Hence, we conclude that $N$ is non-formal.

\subsection{Simply connected formal Sasakian manifolds with $b_2\,\not=\,0$}

The most basic example of a simply connected compact regular Sasakian manifold
is the odd-dimensional sphere $S^{2n+1}$ considered as the total space of the Hopf fibration
$S^{2n+1} \,\hookrightarrow \, {\mathbb{CP}^n}$. It is well-known that $S^{2n+1}$ is formal.
In this section, we show examples of simply connected compact Sasakian manifolds,
with second Betti number $b_{2}\,\not=\,0$, which are formal. 

Note that Theorem \ref{fm2:criterio2} implies that any simply connected compact manifold (Sasakian or not) of dimension $\leq 7$
and with $b_2 \,\leq 1$, is formal. Examples of 7-dimensional simply connected compact Sasakian manifolds, with $b_2 \,\geq 2$,
which are formal are given in \cite{FIM}. 

To show examples of simply connected formal Sasakian manifolds, of dimension $\geq 9$ and with $b_2\,\not=\,0$, we consider the K\"ahler manifold 
$$
M\,=\,{\mathbb{CP}}{}^{n-1} \,\times S^2\, ,
$$
with K\"ahler form 
$$
\omega\,=\,\omega_1 +\omega_2\, ,
$$
where $\omega_1$ and $\omega_2$ are the generators of the integral cohomology group of ${\mathbb{CP}}{}^{n-1}$ and $S^2$, respectively. 
Let $N$ be the total space $N$ of the principal $S^1$-bundle
 $$
S^1 \,\hookrightarrow \,N \,\too\, M\,=\,{\mathbb{CP}}{}^{n-1} \,\times S^2\, ,
 $$ 
with Euler class $[\omega]\,\in\, H^2(M,\,\mathbb{Z})$. Then, $N$ is a simply connected compact (regular) Sasakian manifold, of dimension $2n+1$,
with contact form $\eta$ such that $d \eta \,=\, \pi^*(\omega)$.

\begin{proposition}\label{sasak-formal:b2=1}
The total space $N$ of the circle bundle $S^1\,\hookrightarrow\, N \,\longrightarrow\, M\,=\,{\mathbb{CP}}{}^{n-1} \,\times S^2$, with Euler class
 $[\omega]$, is a simply connected compact Sasakian manifold, with second Betti number $b_2\,=\,1$, which is formal.
\end{proposition}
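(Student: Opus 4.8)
The plan is to establish the three assertions in turn, with formality the substantive one.

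That $N$ is a compact regular Sasakian manifold with $d\eta=\pi^*\omega$ is immediate from the discussion at the beginning of this section, since $[\omega]=[\omega_1]+[\omega_2]$ is an integral K\"ahler class on the compact K\"ahler manifold $M$. For the fundamental group I would use the homotopy exact sequence of the fibration $S^1\hookrightarrow N\to M$, which gives $\pi_1(N)\cong\operatorname{coker}\big(\partial\colon\pi_2(M)\to\pi_1(S^1)=\mathbb Z\big)$; here $\partial$ is surjective because $[\omega]$ evaluates to $1$ on the spherical class carried by the $S^2$ factor (on which $[\omega_1]$ restricts to $0$ and $[\omega_2]$ to a generator), so $\pi_1(N)=0$. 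For $b_2$, the Gysin sequence of the bundle together with $H^{\mathrm{odd}}(M)=0$ gives $H^2(N)\cong H^2(M)/\mathbb R\,[\omega]$, which is one--dimensional because $H^2(M)=\langle[\omega_1],[\omega_2]\rangle$ and $[\omega]\neq 0$.

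For formality, the idea is to produce a small model of $N$ and recognize it. Since $\mathbb{CP}^{n-1}$ and $S^2$ are formal, their minimal models are $(\bigwedge(z,w),d)$ with $|z|=2$, $|w|=2n-1$, $dz=0$, $dw=z^n$, and $(\bigwedge(a,e),d)$ with $|a|=2$, $|e|=3$, $da=0$, $de=a^2$; hence the minimal model of $M=\mathbb{CP}^{n-1}\times S^2$ is the tensor product $(\bigwedge(z,w,a,e),d)$, in which $[\omega_1]=[z]$ and $[\omega_2]=[a]$. As in Section~\ref{subsec:7.1}, a model of the total space of the circle bundle with Euler class $[z+a]$ is obtained by adjoining a generator $t$ of degree $1$ with $dt=z+a$:
$$
\big(\textstyle\bigwedge(z,w,a,e,t),\,d\big),\qquad dz=da=0,\quad dt=z+a,\quad de=a^2,\quad dw=z^n .
$$
I would then change generators, replacing $a,e,w$ by $a'=z+a$, $e'=e+t(z-a)$, $w'=w-z^{\,n-2}e'$; this is legitimate, since $a'$ is a linear change in degree $2$ and $e',w'$ differ from $e,w$ by decomposables. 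A short computation gives $da'=0$, $dt=a'$, $de'=de+(z+a)(z-a)=a^2+z^2-a^2=z^2$, and $dw'=dw-z^{\,n-2}de'=z^n-z^n=0$. In these generators the differential preserves each factor of the decomposition $\bigwedge(z,w,a,e,t)=\bigwedge(z,e',w')\otimes\bigwedge(a',t)$, so the model of $N$ is the tensor product of differential graded algebras $\big(\bigwedge(z,e',w'),d\big)\otimes\big(\bigwedge(a',t),d\big)$; the second factor is acyclic, whence by the K\"unneth theorem $(\bigwedge(z,e',w'),d)$, with $dz=dw'=0$ and $de'=z^2$, is a model of $N$. This is exactly the minimal model of $S^2\times S^{2n-1}$, which is formal as a product of formal spaces --- equivalently, Proposition~\ref{prop:criterio1} applies with the decomposition $V=\langle z,w'\rangle\oplus\langle e'\rangle$, since the ideal generated by $e'$ contains no nonzero closed element. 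Hence $N$ is formal; in fact $N$ has the rational homotopy type of $S^2\times S^{2n-1}$.

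The one delicate point is the change of variables. The mechanism is that, although on $M$ the relations $dw=z^n$ and $de=a^2$ are unrelated, twisting by the Euler class $z+a$ turns $de=a^2$ into $de'=z^2$, so that $z^n=z^{\,n-2}de'=d(z^{\,n-2}e')$ and $w$ can be corrected to a closed generator $w'$; this is precisely what keeps all Massey products trivial, in contrast with the non--formal examples of Section~\ref{subsec:7.1}. (For $n\le 3$ the formality of $N$ also follows from the low--dimensional results recalled earlier, so the new content lies in the range $n\ge 4$.)
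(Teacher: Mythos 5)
Your argument is correct, and on the one substantive point (formality) it is in fact more careful than the proof printed in the paper. The overall strategy is the same: both proofs build a model of $N$ by adjoining a degree-one generator with differential representing $[\omega]=a_1+a_2$ to a model of the formal base $M$, and then read formality off the minimal model. The execution differs. The paper starts from $(H^*(M),0)\otimes\bigwedge(x)$ and asserts that the resulting minimal model is $(\bigwedge(a,b,z),D)$ with $|a|=2$, $|b|=3$, $|z|=2n-1$, $Da=Db=0$, $Dz=a^n$, so that $N^i=0$ for $i\le n$ and Theorem \ref{fm2:criterio2} applies; you instead start from the free minimal model of $M$ and perform an explicit triangular change of generators to exhibit the minimal model of $N$ as $(\bigwedge(z,e',w'),\,dz=dw'=0,\ de'=z^2)$, i.e.\ that of $S^2\times S^{2n-1}$, and conclude with Proposition \ref{prop:criterio1}. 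Your identification is the right one: for $n\ge 3$ the cup product $[\omega]\cdot\,\colon H^2(M)\to H^4(M)$ is injective, so the Gysin sequence gives $H^3(N)=0$, whereas the model stated in the paper is that of $\CP^{n-1}\times S^3$ and has $H^3\ne 0$. With the correct model one has $N^3=\langle e'\rangle\ne 0$, so the mechanism ``$N^i=0$ for $i\le n$'' does not apply verbatim; formality nevertheless follows exactly as you argue, since the ideal $I(e')$ contains no nonzero closed element. Your verifications of $\pi_1(N)=0$ and $b_2(N)=1$, which the paper leaves implicit, are also correct. In short, your route buys an explicit rational homotopy equivalence $N\simeq_{\QQ}S^2\times S^{2n-1}$ and repairs the minimal model computation, at the cost of the (elementary but necessary) change-of-generators bookkeeping; the paper's route is shorter on the page but, as written, rests on an incorrect minimal model.
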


\begin{proof}
Suppose $n\,\geq 4$. We will determine a minimal model of the $(2n+1)$-manifold $N$. 

Clearly $M\,=\,{\mathbb{CP}}{}^{n-1} \,\times S^2$ is formal because $M$ is K\"ahler. Hence, 
a (non-minimal) model of $M$ is the DGA $(H^*(M),0)$, where $H^*(M)$ is the de Rham cohomology algebra of $M$. Thus, a (non-minimal) model of 
$N$ is the differential algebra 
$(\mathcal{A}, d)$, where 
$$
\mathcal{A}\,=\,H^*(M)\otimes \bigwedge(x), \quad |x|\,=\,1, \quad d(H^*(M))\,=\,0, \quad dx\,=\,a_{1}\,+\, a_{2},
$$
where $a_{1}$ is the integral cohomology class defined by the K\"ahler form $\omega_{1}$ on 
${\mathbb{CP}}{}^{n-1}$, and $a_{2}$ is the integral cohomology class defined by the K\"ahler form $\omega_{2}$ on 
$S^2$. Then, the minimal model associated to this model of $N$ is
$$
({\SM}\, , \,D)\,=\,(\bigwedge(a, b, z)\, ,\, D),
$$
where $|a|\,=\,2$, $|b|\,=\,3$ and $|z|\,=\,2n-1$, 
while the differential $D$ is given
by $Da\,=\,Db\,=\,0$ and $Dz\,=\,a^n$. 
Therefore, we get 
$$
N^i\,=\,0,
$$
for $1\,\leq i\,\leq n$. Then, Theorem \ref{fm2:criterio2} implies that $N$ is formal because it is $n$-formal.
\end{proof}

\subsection{Non-formal quasi-regular Sasakian manifolds with $b_1=0$} \label{ex:quasi-regular}
The previous examples can be tweaked to obtain also examples of quasi-regular Sasakian manifolds $P$, where
the base of the (orbifold) circle bundle $S^1 \hookrightarrow P\to X$ is an honest orbifold K\"ahler manifold $X$. 
Obtaining simply connected manifolds $P$ in this way is a delicate matter, since the fundamental group of $P$ 
relates to the \emph{orbifold fundamental group} of $X$, and not its fundamental group (see \cite{Kol} and 
\cite{MRT} for discussions on these issues). Therefore we content ourselves with writing down examples with $H_1(P,\,\ZZ)\,=\,0$.

Consider a complex $3$-torus $T^3=\CC^3/\Gamma$, where $\Gamma$ is the discrete subgroup of $\CC^3$ consisting 
of the elements $(z_1, z_2, z_3)\in \CC^3$
whose components  $z_1, z_2$ and $z_3$ are Gaussian integers.  Now consider
the action  of the finite group $\ZZ_2$ on $\CC^3$ given by 
\begin{eqnarray*}
\varphi\colon \CC^3 & \to & \CC^3\\
 (z_1, z_2, z_3) &\mapsto & (-z_1, -z_2, -z_3),
\end{eqnarray*}
where $\varphi$ is the generator of $\ZZ_2$. This action satisf{}ies that $\varphi(z+z')=\varphi(z)+\varphi(z')$,
for $z, z' \in \CC^3$. Moreover, $\varphi(\Gamma)=\Gamma$. Therefore, $\varphi$
induces an action on $T^3=\CC^3/\Gamma$ with $2^6$ fixed points corresponding to 
$(z_1=u_1 + i\, u_2,  z_2=u_3 + i\, u_4, z_3=u_5 + i\, u_6)$ with $u_i=0, \frac{1}{2}$.
Thus, the quotient space 
$$X={T^3}/\ZZ_2$$
is a K\"ahler orbifold of (real) dimension $6$
with $2^6$ isolated orbifold singularities of order $2$. In fact, one can check that
the  standard complex structure $J$ on $T^3$ commutes with the  $\ZZ_2$-action, that is
$(\varphi_{*})_z\circ J_{z} = J_{\varphi(z)}\circ (\varphi_{*})_z$, for any point $z\in T^3$.
Moreover, the standard Hermitian metric and the K\"ahler form $\omega'$ on $T^3$ are $\ZZ_2$-invariant, and so they
induce an orbifold Hermitian metric and an orbifold K\"ahler form $\omega$ on $X$, respectively.

By \eqref{orbi-fcoh}, the cohomology of $X$ is given by $H^1(X,\ZZ)=H^1(T^3,\ZZ)^{\ZZ_2}=0$, hence
$b_1(X)=0$. Now consider the orbifold circle bundle 
\begin{equation*}\label{eqn:vic}
S^1\hookrightarrow P \stackrel{\pi}{\too} X,
 \end{equation*}
given by $c_1(P)=[\omega]$. We have the following:

\begin{proposition}
The manifold $P$ is a $7$-dimensional quasi-regular Sasakian manifold $N$ with $b_1=0$ which is non-formal.
\end{proposition}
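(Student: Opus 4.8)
The plan is to mimic, in the orbifold setting, the standard argument that the total space of a circle bundle over a K\"ahler base supporting a nontrivial Massey product is non-formal. First I would build a model of $P$. Since $X = T^3/\ZZ_2$ is a compact K\"ahler orbifold, Theorem \ref{thm:K-orb-formal} tells us $X$ is formal, so a model of $X$ is $(H^*(X),0)$ with $H^*(X) = H^*(T^3)^{\ZZ_2}$ computed via \eqref{orbi-fcoh}. The bundle $S^1 \hookrightarrow P \stackrel{\pi}{\too} X$ with $c_1(P) = [\omega]$ then has a model of the form $\big(H^*(X)\otimes\bigwedge(x),\,d\big)$ with $|x| = 1$, $dx = [\omega]$ and $d$ vanishing on $H^*(X)$; this is the usual model for the Euler-class extension of the base (see the analogous argument in Subsection \ref{subsec:7.1}), and by Lemma \ref{lemm:massey-models} Massey products on $P$ can be computed from this model. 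The verification that $H_1(P,\ZZ) = 0$ and that $P$ has dimension $7$ is immediate: $b_1(X) = 0$ so the Gysin sequence gives $H^1(P) = 0$, and $\dim P = \dim X + 1 = 7$. The Sasakian structure on $P$ follows since $\pi$ is an orbifold submersion and $d\eta = \pi^*\omega$ for the connection $1$-form $\eta$, by \cite[Theorem 20]{MRT} as recalled in the text.

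Next I would exhibit a nontrivial triple Massey product on $P$. Write $T^3 = \CC^3/\Gamma$ with flat coordinate $1$-forms $dz_j, d\bar z_j$; the $\ZZ_2$-invariant cohomology $H^*(X)$ is spanned by the even-degree products of the $dz_j\wedge d\bar z_k$, with $H^{\text{odd}}(X) = 0$. In particular $H^2(X)$ is spanned by the classes $[\sqrt{-1}\,dz_j\wedge d\bar z_k]$, the K\"ahler class being $[\omega] = \sum_j [\sqrt{-1}\,dz_j\wedge d\bar z_j]$. Since $[\omega]$ lies in the span of these classes, in the model $H^*(X)\otimes\bigwedge(x)$ each of the three summands $[\sqrt{-1}\,dz_j\wedge d\bar z_j]$ becomes exact up to the K\"ahler class after multiplying by suitable elements and using $dx = [\omega]$; concretely, setting $a_j = [\sqrt{-1}\,dz_j\wedge d\bar z_j]$ one has $a_j\cdot a_j = 0$ in $H^*(X)$ and $a_1\cdot a_2 = \tfrac12 d\big((a_1 + a_2 - a_3)\cdot x\big)$, exactly as in Subsection \ref{subsec:7.1}. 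Then the triple Massey product $\la a_1, a_1, a_2\ra$ is defined, and a representative is (up to a nonzero scalar) $[(a_1\cdot a_2 - a_1\cdot a_3)\cdot x]$, which is a nonzero class in $H^5(P)$ because $(a_1\cdot a_2 - a_1\cdot a_3)\cdot x$ survives in $H^*(X)\otimes\bigwedge(x)$ (the class $a_1\cdot a_2 - a_1\cdot a_3 \in H^4(X)$ is nonzero and is not a multiple of $[\omega]^2$, so it is not killed by the differential). One should also check the indeterminacy vanishes: $\la a_1,a_1,a_2\ra$ lives in $H^5(P)/(a_1\cdot H^3(P) + a_2\cdot H^3(P))$, and $H^3(P) = 0$ since by the Gysin sequence $H^3(P) = \mathrm{coker}\big([\omega]\colon H^1(X)\to H^3(X)\big)\oplus\ker\big([\omega]\colon H^2(X)\to H^4(X)\big)$, and $H^1(X) = H^3(X) = 0$ while multiplication by $[\omega]$ is injective on $H^2(X)$ (a consequence of hard Lefschetz on $X$, Theorem \ref{thm:hard-Lefschetz}, or checked directly). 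Hence the Massey product is a genuine nonzero element.

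Having produced a nonzero triple Massey product on $(\Omega^*_{orb}(P),d)$ — equivalently on the minimal model of $P$ — Theorem \ref{theo:Massey products} (together with Lemma \ref{lemm:massey-models} and the definition of formality for manifolds) immediately yields that $P$ is non-formal, completing the proof.

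The main obstacle I anticipate is the bookkeeping in the two Gysin-sequence computations — verifying $H^3(P) = 0$ (to kill the indeterminacy) and verifying that the representative $(a_1\cdot a_2 - a_1\cdot a_3)\cdot x$ is genuinely nonzero in $H^5(P)$ rather than being hit by $d$ of something involving $x$. Both reduce to understanding the map "multiplication by $[\omega]$" on $H^*(X) = H^*(T^3)^{\ZZ_2}$, which is elementary linear algebra on the explicit invariant forms $dz_j\wedge d\bar z_k$, but must be done carefully; injectivity of $L_\omega$ in the relevant degrees is guaranteed in advance by the hard Lefschetz theorem for K\"ahler orbifolds (Theorem \ref{thm:hard-Lefschetz}), so no surprises should arise there.
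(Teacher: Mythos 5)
Your proposal is correct and follows essentially the same route as the paper: both establish the Sasakian structure via \cite[Theorem 20]{MRT}, deduce $b_1(P)=0$ from the fibration, and exhibit the same non-trivial triple Massey product $\la a_1,a_1,a_2\ra=\frac12[(a_1\cdot a_2-a_1\cdot a_3)\cdot\eta]$ using $a_1\cdot a_2=\frac12 d\big((a_1+a_2-a_3)\cdot\eta\big)$, the only cosmetic difference being that you phrase the computation in the algebraic model $H^*(X)\otimes\bigwedge(x)$ with $dx=[\omega]$ while the paper works directly with the contact form $\eta$ on $P$ and then identifies $P$ topologically with the manifold $N$ of Subsection \ref{subsec:7.1}. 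Your explicit verification of the vanishing indeterminacy via $H^3(P)=0$ is a welcome addition that the paper only carries out in the regular case.
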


\begin{proof}
 The total space of the orbifold circle bundle $P$ has a Sasakian structure with  contact form $\eta$ such that
 $d\eta=\pi^*(\omega)$, by
 \cite[Theorem 20]{MRT} (the proof of this result is given in the K-contact case but it works also for the Sasakian case). 
 The Leray spectral sequence gives that $b_1(P)=0$.

Let us see that $P$ is non-formal. 
First note that the cohomology of $T^3$ is the exterior algebra $\bigwedge^*(x_1,\ldots,x_6)$, with
$|x_i|=1, 1\leq i\leq 6$. Then $H^*(X)=\bigwedge^{\text{even}}(x_1,\ldots, x_6)$. Let $a_1=x_1x_2$, $a_2=x_3x_4$,
$a_3=x_5x_6$, so that $[\omega]=a_1+a_2+a_3$. As in Subsection \ref{subsec:7.1},
there is non-trivial (triple) Massey product in $P$. 
Indeed, $a_1\cdot a_1=0$ and $a_1\cdot a_2=\frac{1}{2}\,d\big((a_1+a_2-a_3)\cdot  \eta\big)$.
Then,
 $$
 \la a_1,a_1,a_2\ra\,=\,\frac{1}{2}[(a_1\cdot a_2-a_1\cdot a_3)\cdot \eta],
 $$
where $d\eta=\pi^*(\omega)$. So $P$ is non-formal.

There is a geometrical explanation of the above Massey product. 
 If $T=\CC/\ZZ^2$ is the $2$-torus, then the quotient 
$T/\ZZ_2 \cong S^2$, as a topological manifold. Thus 
 $$
 T^3/(\ZZ_2\x \ZZ_2\x \ZZ_2) = (T/\ZZ_2) \x (T/\ZZ_2) \x (T/\ZZ_2) \cong S^2\x S^2\x S^2 =M\, ,
$$
where each of the factors of $\ZZ_2\x \ZZ_2\x \ZZ_2$ acts on each of the three factors of
$T^3=T\x T\x T$, respectively, and $M$ is the 6-manifold of Subsection \ref{subsec:7.1}.
Therefore, the orbifold $X$ sits in the middle of two quotient maps
$$
T^3\to X=T^3/\ZZ_2\to M \cong {T^3}/(\ZZ_2\x \ZZ_2\x \ZZ_2).
 $$ 
Then there is a diagram
 \begin{eqnarray*} 
 S^1  & \hookrightarrow P \longrightarrow & X \\ 
  || \,\, & \downarrow & \, \, \downarrow \\
  S^1 &  \hookrightarrow N \longrightarrow & M
 \end{eqnarray*}
where $N$ is the 7-manifold of Subsection \ref{subsec:7.1}.
So, $P$ and $N$ are the same topological manifold. 
Then the non-zero Massey product
of $N$ produces the non-zero Massey product for $P$, giving the non-formality of $P$.
\end{proof}

\section*{Acknowledgements}

We are grateful to the referees for their helpful comments.
The first author is supported by a Post-Doc grant at Philipps-Universit\"at Marburg. 
The second author is supported by the J. C. Bose Fellowship. The third author is 
partially supported through Project MINECO (Spain) MTM2014-54804-P and
Basque Government Project IT1094-16. The fourth 
author is partially supported by Project MINECO (Spain) MTM2015-63612-P.


\begin{thebibliography}{33}


\bibitem{Adem} 
\textsc{A.~Adem, J.~Leida and Y. Ruan}, 
{\em Orbifolds and string theory},
Cambridge Univ. Press, 2007.

\bibitem{Atiyah} 
\textsc{M. Atiyah}, 
{\em Elliptic operators and compact groups},
Lecture Notes in Math. {\bf 401}, Springer-Verlag, 1974.

\bibitem{Baily1} \textsc{W. Baily}, 
The decomposition theorem for $V$-manifolds, 
{\it Amer. J. Math.\/} {\bf 78} (1956), 862--888.

\bibitem{Baily2} \textsc{W. Baily}, 
On the imbedding of $V$-manifolds in projective space,
{\it Amer. J. Math.\/} {\bf 79} (1957), 403--430.

\bibitem{BaFeMu} \textsc{G. Bazzoni, M. Fern\'andez and V. Mu\~noz},
A $6$-dimensional simply connected complex and symplectic manifold with no K\"ahler metric,
to appear in {\it J. Symplectic Geom.\/} arxiv:1410.6045.

\bibitem {BFMT} \textsc{I. Biswas, M. Fern\'andez, V. Mu\~noz and A. Tralle},
On formality of Sasakian manifolds, 
{\it J. Topol.\/} {\bf 9} (2016), 161--180.

\bibitem{BG} \textsc{C. Boyer and K. Galicki}, {\it Sasakian Geometry},
Oxford Univ. Press, Oxford, 2007.

\bibitem{Bredon} 
\textsc{G. E. Bredon}, {\it Introduction to Compact Transformation Groups}, Pure and Applied Mathematics 
{\bf 46}, Academic Press, 1972.

\bibitem{CFM} \textsc{G. Cavalcanti, M. Fern\'andez and V. Mu\~noz},
Symplectic resolutions, Lefschetz property and formality, {\it Adv. Math.\/}
{\bf 218} (2008), 576--599.

\bibitem{CW} \textsc{L. Cordero and R. Wolak},
Properties of the basic cohomology of transversely K\"ahler foliations, {\it Rend. Circolo Mat. Palermo}
{\bf 40} (1991), 177--188.

\bibitem{DGMS} 
\textsc{P. Deligne, P. Griffiths, J. Morgan and D. Sullivan},
Real homotopy theory of K\"ahler manifolds,
{\it Invent. Math.\/} {\bf 29} (1975), 245--274.

\bibitem{DHVW} 
\textsc{L.J.~Dixon, J.A.~Harvey, C.~Vafa and E.~Witten},
Strings on orbifolds I, 
{\it Nuclear Phys. B\/} {\bf 261} (1985), 678--686.

\bibitem{El Kacimi} 
\textsc{A. El Kacimi Alaoui},
Op\'erateurs transversalement elliptiques sur un feuilletage riemannien et applications,
{\it Compositio Math.\/} {\bf 73} (1990), 57--106.

\bibitem{FHT} \textsc{Y. Felix, S. Halperin and J.-C. Thomas}, {\it Rational Homotopy Theory},
Springer, 2002.

\bibitem{FIM} 
\textsc{M. Fern\'andez, S. Ivanov and V. Mu\~noz}, Formality of $7$-dimensional $3$-Sasakian manifolds, 
arxiv:1511.08930.

\bibitem{FM} 
\textsc{M. Fern\'andez and V. Mu\~noz}, Formality of Donaldson submanifolds, 
{\em Math. Z.\/} {\bf 250} (2005), 149--175.

\bibitem{FM1}
\textsc{M. Fern\'andez and V. Mu\~noz}, An $8$-dimensional non-formal simply
connected symplectic manifold, {\it Annals of Math.\/} {\bf 167} (2008), 1045--1054.

\bibitem{GHS}
\textsc{J. Girbau, A. Haefliger and D. Sundararaman},
On deformations of transversely holomorphic foliations,
{\it J. Reine Angew Math.\/} {\bf 345} (1983), 122--147.

\bibitem{GM}
\textsc{P. Griffiths and J. Morgan}, {\it Rational homotopy theory
and differential forms\/}, Progress in Math. {\bf 16}, Birkh\"auser, 1981.

\bibitem{Halperin}
\textsc{S. Halperin}, {\it Lectures on minimal models\/},
M\'em. Soc. Math. France {\bf 230}, 1983.

\bibitem{Ka1}
\textsc{T. Kawasaki}, The signature theorem for $V$-manifolds,
{\em Topology} {\bf 17} (1978), 75--83.

\bibitem{Ka2}
\textsc{T. Kawasaki}, The Riemann-Roch theorem for complex 
$V$-manifolds, {\em Osaka J. Math.} {\bf 16} (1979), 151--159.

\bibitem{Ka3}
\textsc{T. Kawasaki}, The index of elliptic operators over $V$-manifolds,
{\em Nagoya Math. J.\/} {\bf 84} (1981), 135--157.


\bibitem{Kleiner-Lott}
\textsc{B. Kleiner and J. Lott}, Geometrization of three-dimensional orbifolds via Ricci flow,
{\em Ast\'erisque} {\bf 365} (2014), 101--177.

\bibitem{Kol}
\textsc{J. Koll\'ar}, Shafarevich maps and plurigenera of algebraic varieties, 
{\em Inventiones Math.\/} {\bf 113} (1993), 177--216.

\bibitem{Ko-Top} 
\textsc{J. Koll\'ar}, Circle actions on simply connected 5-manifolds, {\em Topology} 45(2006), 643-672 

\bibitem{MM} 
\textsc{I.~Moerdijk and J.~Mrcun}, {\em Introduction to foliations and Lie groupoids}, 
Cambridge Studies in Adv. Math. {\bf 91}, Cambridge Univ. Press, 2003.

\bibitem{MP} 
\textsc{I.~Moerdijk and D. A.~Pronk}, Orbifolds, sheaves and groupoids, 
{\em K-Theory} {\bf 12} (1997), 3--21.

\bibitem{MT} \textsc{V. Mu\~noz and A. Tralle}, Simply connected K-contact and Sasakian manifolds 
of dimension $7$, {\em Math. Z.\/} {\bf 281} (2015), 457--470.

\bibitem{MRT} \textsc{V. Mu\~noz, J. A. Rojo and A. Tralle}, Homology Smale-Barden manifolds with K-contact and Sasakian structures, arxiv:1601.06136

\bibitem{N-Miller} 
\textsc{J. Neisendorfer and T. Miller}, Formal and coformal spaces, {\em Illinois.
J. Math.\/} {\bf 22} (1978), 565--580.

\bibitem{No}
\textsc{K. Nomizu}, On the cohomology of compact homogeneous spaces of nilpotent Lie groups, {\it Annals of Math.\/} {\bf 59} (1954), 531--538.

\bibitem{OV} \textsc{L. Ornea and M. Verbitsky},
Sasakian structures on $CR$-manifolds, {\it Geom. Dedicata}
{\bf 125} (2007), 159--173.


\bibitem{Pflaum}
 \textsc{M. Pflaum},
{\em Analytic and geometric study of stratified spaces}, 
Lecture Notes in Math. {\bf 1768}, Springer, 2001.
 
\bibitem{S1}
 \textsc{I. Satake},
On a generalization of the notion of manifold, 
{\em Proc. Nat. Acad. Sci. USA\/} {\bf 42} (1956), 359-363.

\bibitem{S2}
\textsc{I. Satake}, The Gauss-Bonnet theorem for 
V-manifolds, {\em J. Math. Soc. Japan\/} {\bf 9} (1957), 464-492.

\bibitem{Su}
 \textsc{D. Sullivan}, Infinitesimal computations in topology,
{\em Inst. Hautes \'Etudes Sci. Publ. Math.\/} {\bf 47} (1978), 269--331.

\bibitem{Thurston}
 \textsc{W. P. Thurston}, {\em The geometry and topology of 3-manifolds}, Mimeographed Notes,
 Princeton University, 1979.

\bibitem{Tondeur} \textsc{P. Tondeur}, {\em Geometry of foliations}, 
Monographs in Math. {\bf 90}, Birkh\"auser Verlag, 1997.

\bibitem{TO}
\textsc{A. Tralle and J. Oprea}, {\em Symplectic manifolds with no K\"ahler structure\/},
Lecture Notes in Math. {\bf 1661}, Springer--Verlag, 1997.

\bibitem{MishaV}
\textsc{M. Verbitsky}, Hodge theory on nearly K\"ahler manifolds,
\emph{Geom. Topol.\/} {\bf 15} (2011), 2111--2133.

\bibitem{Wang-Zaffran}
\textsc{Z. Wang and D. Zaffran}, A remark on the Hard Lefschetz theorem for K\"ahler orbifolds,
\emph{Proc. Amer. Math. Soc.\/} {\bf 137} (2009), 2497--2501.

\bibitem{Wells} 
\textsc{R. Wells,} \textit{Differential analysis on 
complex manifolds}, Graduate Texts in Mathematics, 65, Springer-Verlag,
New York-Berlin, 1980.
\end{thebibliography}
\end{document}